\theoremstyle{plain}
\newtheorem*{lawsonsfirst}{Lawson's First Theorem}
\newtheorem*{lawsonssecond}{Lawson's Second Theorem}
\newtheorem*{radons}{Radon's Theorem}
\newtheorem*{caratheodorys}{Carath\'eodory's Theorem}
\theoremstyle{definition}
\newcommand\T{\mathscr{T}}
\newcommand\Vol{\qopname\relax o{Vol}}
\newcommand\conv{\qopname\relax o{conv}}
\newcommand\origin{\mathbf{0}}
\newcommand\matrixm{\mathbf{M}}
\newcommand\matrixa{\mathbf{A}}
\newcommand\matrixi{\mathbf{I}}
\renewcommand\int{\qopname\relax o{int}}
\newcommand\aff{\qopname\relax o{aff}}
\renewcommand\P{\mathscr{P}}
\newcommand\Skd{\mathscr{S}_k^d}
\newcommand\V{\mathscr{V}}
\renewcommand\A{\mathscr{A}}
\renewcommand\S{\mathscr{S}}
\begin{document}

\title{Minimal Volume $k$-point Lattice $d$-Simplices}
\author{Han Duong}
\begin{abstract}
We show via triangulations that for $d\ge 3$ there is exactly one class (under
unimodular equivalence)
of nondegenerate lattice
simplices in $\R^d$ with minimal volume
and $k$ interior lattice points.
\end{abstract}
\date{\today}
\maketitle

%
%
\section{Introduction}

A {\em $d$-polytope} $P$ is a polytope of dimension $d$.
If its {\em vertex set} $\V(P)$ is a subset of $\Z^d$, then
$P$ is a {\em lattice} $d$-polytope. If in
addition $|\V(P)| = d+1$, then $P$ is a
lattice {\em $d$-simplex}.
The convex hull of
$\P = \{ v_1, \dotsm, v_n \} \subset \Z^d$, denoted by $\conv(\P)$,
is a lattice polytope with at most $n$ vertices
and dimension at most $d$. This notation will be used loosely;
for convenience, we use 
$\conv(P, v)$ to mean $\conv(\V(P) \cup \{ v \})$
when it is clear $P$ is a polytope and $v$ is a point.
As used in \cite{BR}, we say
that $P$ is {\em clean} if $\partial P \cap Z^d = \V(P)$,
where $\partial P$ is the boundary of $P$. If in addition $\int(P)$, 
the {\em interior} of $P$, contains  $k$  lattice points, then $P$ is a clean
{\em $k$-point} lattice polytope. If $k=0$, then the polytope is {\em empty}.
We use $\Skd $ to denote the collection of clean $k$-point lattice
$d$-simplices. Unless otherwise stated,
all polytopes are taken to be convex $d$-polytopes.

Reznick proved in \cite{BR86} and \cite{BR} that any lattice tetrahedron with at least one
clean face is unimodularly equivalent to some $T_{a,b,n}$, the 
lattice tetrahedron with vertex set 
\[ \{\ (0,0,0),\ (1,0,0),\ (0,1,0),\ (a,b,n)\ \} \]
where $(a,b,n)\in \Z^3$ and $0<a,b<n$.
Reznick also classified the set of clean 1-point tetrahedra,
up to equivalence under unimodular transformations,
using barycentric coordinates.  Very recently, Bey, Henk, and Wills
proved in \cite{CBMHJW} that if $P$ is a lattice $d$-polytope, not necessarily clean,
and $P$
has $k$ interior lattice points, then for $d\ge 1$, the volume of $P$ satisifies
\begin{equation}\label{volumeboundeq}
\Vol(P) \ge \frac{1}{d!}(dk+1).
\end{equation}
Moreover, they showed that for $k=1$, equality holds if and only if $P$
is unimodularly equivalent to the simplex $S_d(1)$, where
\[ S_d(k) = 
\conv \left(e_1, \dotsm, e_d, -k \sum_{i=1}^d e_i\right) \]
and $e_i$ denotes the $i$-th unit point. 
This is not true for $d=2$ and $k>2$.
We will show that equality holds in (\ref{volumeboundeq}) for all $k>0$
if and only if $d\ge 3$ and $P$ is unimodularly equivalent to $S_d(k)$.
We first prove that if $T\in \Skd $ and $\Vol(T) = \frac{1}{d!}(d k+1)$, 
then the interior points lie on a line passing through some vertex of $T$.
We then show that such simplices are unimodularly equivalent to 
$T_{a_1, \dotsm, a_{d}}$,
the $d$-simplex whose vertex set consists of the origin, the
points $e_i$ $(1\le i \le d-1)$, and the point
$(a_1, \dotsm, a_{d})$, where $a_j = dk$ for $1\le j < d$ and $a_{d} = dk+1$.
Finally, we will show that $S_d(k)\in \Skd$ and
$\Vol(S_d(k)) = \frac{1}{d!}(dk+1)$.

%
%
\section{Preliminaries}

The following definitions are taken from
\cite{MBSR}, \cite{BG}, \cite{BBP}, and \cite{GZ}.
A {\em $j$-flat} is a $j$-dimensional affine subspace of $\R^d$.
{\em Points}, {\em lines}, and {\em planes} are $0$-flats,
$1$-flats, and $2$-flats, respectively. The 
{\em affine hull} of a set $\P\subset \R^d$,
denoted by $\aff(\P)$,
is the intersection of all flats containing $\P$. 
Equivalently, $\aff(\P)$ is the smallest flat containing $\P$. We say
$\P$ is in {\em general position} if no $j+2$ points of $\P$
lie in a $j$-flat, where $j<d$.
A hyperplane
\[ H = \{ x \in \R^d : \mathbf{a\cdot x} = b \} \]
is a $(d-1)$-flat. 
If $P$ is a $d$-polytope, then
$H$ is a {\em supporting hyperplane} of 
$P$ if $P$ lies entirely on one side of $H$.
A {\em face} of $P$ is an intersection
$P \cap H$, where $H$ is a supporting hyperplane. 
If we allow degenerate hyperplanes, then $P$ is a face of $P$
corresponding to $H=\R^d$; $\emptyset$ is also a face of $P$ corresponding
to a hyperplane that does not meet $P$.
A {\em $j$-face} of $P$ is a $j$-dimensional face of $P$.
A $(d-1)$-face is a {\em facet}, a $1$-face is an {\em edge},
and a $0$-face is a {\em vertex}.
One property of $d$-polytopes is that any $j$-face of $P$ is
contained in at least $d-j$ facets of $P$.

\begin{lemma} \label{ifaceinjface} \cite[Section 3.1]{BG}
If $0\le i < j \le d-1$ and if $P$ is a $d$-polytope, each $i$-face of
$P$ is the intersection of the family 
of $j$-faces of $P$ containing it. There are at least $j+1-i$ such faces.
\end{lemma}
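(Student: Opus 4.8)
The plan is to reduce the whole statement to the case $j=d-1$, which follows from the standard fact that every proper face of a polytope is the intersection of the facets containing it, together with the numerical property quoted just above the lemma.

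First I would record that facet case: if $Q$ is a polytope of dimension $m\ge 1$ and $F$ is a proper face of $Q$, then $F$ is the intersection of the facets of $Q$ containing it (in particular $F$ lies in some facet). One argument: fix an irredundant halfspace description $Q=\bigcap_\ell\{x:\mathbf{a}_\ell\cdot x\le b_\ell\}$, whose facets are the sets $Q\cap\{\mathbf{a}_\ell\cdot x=b_\ell\}$, and let $I=\{\ell:F\subseteq\{\mathbf{a}_\ell\cdot x=b_\ell\}\}$. Since a linear functional attaining its maximum over $F$ at a point of $\operatorname{relint}F$ is constant on $F$, one has $\mathbf{a}_\ell\cdot x<b_\ell$ on $\operatorname{relint}F$ for every $\ell\notin I$; then any $y\in Q$ satisfying the equalities indexed by $I$ must lie in $F$, because a suitable point $z\in Q$ just beyond $x_0\in\operatorname{relint}F$ on the ray from $y$ through $x_0$ (still in $Q$, by the strict inequalities) makes $x_0$ a proper convex combination of $y$ and $z$, and $F$ is a face. (Alternatively, cite \cite[Section 3.1]{BG} or any standard polytope text.)

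Now let $F$ be an $i$-face of $P$ with $0\le i<j\le d-1$. If $j=d-1$, the preceding paragraph applied to $P$ gives the intersection statement, and the quoted property gives the bound $d-i=j+1-i$. So assume $j\le d-2$. I would first build a $(j+1)$-face $G$ of $P$ containing $F$ by descending one dimension at a time: $F$ lies in a facet $F^{(d-1)}$ of $P$; $F$ is then a proper face of the polytope $F^{(d-1)}$, so it lies in a facet $F^{(d-2)}$ of it, which is a $(d-2)$-face of $P$ (a face of a face of $P$ is a face of $P$); iterating down to dimension $j+1$ — valid since $\dim F=i$ stays strictly below every intermediate dimension — produces the desired $G$. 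Applying the facet case to the $(j+1)$-polytope $G$ and its proper face $F$, we get that $F$ is the intersection of the facets of $G$ containing it, each of which is a $j$-face of $P$; since $F$ is contained in every $j$-face of $P$ that contains it, $F$ equals the intersection of exactly the family of $j$-faces of $P$ containing it.

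For the count it suffices to bound below the number of facets of $G$ containing $F$, since distinct such facets are distinct $j$-faces of $P$ containing $F$. Applying the property quoted above the lemma to $G$ (dimension $j+1$) and its $i$-face $F$ yields at least $(j+1)-i=j+1-i$ of them. (One can also count directly: if $F$ is cut out inside $P$ by the $r$ facet hyperplanes through it, then $\aff F$ is their common intersection, so $i=\dim F=d-\operatorname{rank}\{\mathbf{a}_\ell\}_{\ell\in I}\ge d-r$, whence $r\ge d-i$, and the general $j$ follows by passing to $G$.) The one genuinely non-formal step, and thus the main obstacle, is the facet case of the second paragraph: that is where the geometry of faces — relative interiors and the behaviour of faces under taking segments — actually enters; everything afterward is bookkeeping in the face poset.
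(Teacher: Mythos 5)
Your proof is correct. Note that the paper gives no argument for this lemma at all — it is quoted directly from Gr\"unbaum \cite[Section 3.1]{BG} — so there is nothing in the text to compare against; your reduction to the facet case via a descending chain of faces, together with the rank bound $\dim F = d-\mathrm{rank}\{\mathbf{a}_\ell\}_{\ell\in I}\ge d-r$ for the count, is precisely the standard argument from Gr\"unbaum or Ziegler, and the auxiliary facts you invoke (faces of faces are faces; strict inequalities on the relative interior of $F$ for the non-tight facet inequalities) are all used correctly.
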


We generally use capital letters to denote $d$-polytopes.  In particular
$P$ and $Q$ are  $d$-polytopes, and $S$ and $T$ are $d$-simplices.
Capital script letters will generally denote sets of $d$-simplices.
In particular, $\P$ is a set of points ($0$-simplices).

%
%
\section{Triangulations and Refinements}

Borrowing from \cite{CLL}, we
let $\P$ denote a set of $n$ distinct points in $\R^d$,
where $n \ge d+1$ and $d\ge 2$.
Assume $\P$ does not lie entirely in a hyperplane. Let $P=\conv(\P)$. A
{\em triangulation}, $\T$, of $\P$ (or of $P$ with the dependence on $\P$
understood) is a set of nondegenerate $d$-simplices $\{ T_i \}$
with the following properties.
\renewcommand{\labelenumi}{(\alph{enumi})}
\begin{enumerate}
\item All vertices of each simplex are members of $\P$.
\item The interiors of the simplices are pairwise disjoint.
\item Each facet of a simplex is either on the boundary of $P$,
or else is a common facet of exactly two simplices.
\item \label{partialcondition} Each simplex
contains no points of $\P$ other than its vertices.
\item The union of $\{ T_i \}$ 
 is $\P$ and the union of $T_i$ is $P$.
\end{enumerate}
\renewcommand{\labelenumi}{(\arabic{enumi})}
Since each $d$-simplex has volume at least $\frac{1}{d!}$, one
immediate consequence is
\begin{equation}\label{volbytriangulation}
\Vol(P) \ge \frac{1}{d!} \cdot |\T|.
\end{equation}
To prove (\ref{volumeboundeq}), Bey, Henk, and Wills showed $P$
with $k$ interior lattice points can be
decomposed into at least $dk+1$ nondegenerate $d$-subpolytopes.
Any $d$-polytope must contain a $d$-simplex as a subpolytope, so
(\ref{volbytriangulation})
still holds if $\T$ is replaced by this decomposition. We
will present a slight variation of their theorem and its proof by
using triangulations.

\begin{dfn} Let $P$ be a lattice $d$-polytope.
A {\em lattice triangulation} $\T$ of $P$
is a triangulation of some set $\P\subset \Z^d$
such that $\V(P) \subseteq \P \subseteq P\cap \Z^d$.
Note that if $\V(P) \subseteq \P, \P' \subseteq P\cap \Z^d$
and $\P \not= \P'$, we still have
$\conv(\P) = P = \conv(\P')$.
On the other hand, 
the triangulation $\T$ of $\P$ and
the triangulation $\T'$ of $\P'$ are necessarily different.
Fortunately, we can reconstruct the 
{\em vertex set} of $\T$, denoted by  $\V(\T)$,
by taking the union of all vertices of all $T\in \T$. Thus
$\V(\T) = \P$ and $\V(\T') = \P'$.
\end{dfn}

\begin{dfn}
Let $P$ be a lattice polytope
and $\T$ be a lattice triangulation $P$. We say
$\T'$ is a {\em refinement} 
of $\T$ 
(and write $\T \prec \T'$)
provided $\T'$ is a lattice
triangulation of $P$,
$\V(\T) \subsetneq \V(\T')$, and for all $T'\in \T'$
there exists  $T\in \T$ such that $T'\subseteq T$.
We say that $\T$ is a {\em full lattice triangulation}
if $\V(\T) = P\cap \Z^d$. Otherwise we say $\T$ is a {\em
partial lattice triangulation}. 
\end{dfn}

Naturally, triangulations of $\P\subset \R^d$
partition $\conv(\P)$ into simplices.
On the other hand, there exist partitions of $\conv(\P)$ that satisify all
but condition (d) in the definition of a triangulation.

\begin{figure}[h] 
\centering
\psfrag{v1}{\smaller $e_2$}
\psfrag{v2}{\smaller $e_1$}
\psfrag{v3}{\smaller $-e_2$}
\psfrag{v4}{\smaller $-e_1$}
\psfrag{v5}{\smaller $\mathbf{0}$}
\psfrag{T1}{\smaller $\mathscr{Q}_1$}
\psfrag{T2}{\smaller $\mathscr{Q}_2$}
\psfrag{T3}{\smaller $\mathscr{Q}_3$}
\includegraphics[scale=1]{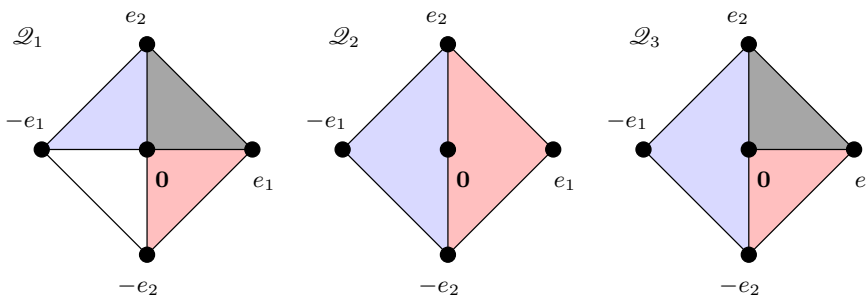}
\caption{Partitions of 
$P$ into triangles\label{triangulationexamples}}
\end{figure}

\begin{example}
Let $\P = \{ e_1, e_2, -e_1, -e_2 \} \subset \R^2$
and let $P = \conv(\P)$.
Note that $P$ is a 1-point lattice polygon, and its
interior lattice point is the origin.
Three possible partitions $\mathscr{Q}_1$,
$\mathscr{Q}_2$, and $\mathscr{Q}_3$ of $P$ into 
triangles are shown in Figure \ref{triangulationexamples}.
Note that $\mathscr{Q}_1$ is a triangulation of $\P\cup \{ (0,0)\}$
and a full lattice triangulation of $P$. However, $\mathscr{Q}_1$
 is not a triangulation of $\P$.
The middle partition $\mathscr{Q}_2$ is
both a triangulation of $\P$ and
a partial lattice triangulation of $P$ (viewed as a triangulation of $\V(P)=\P$).
The partition $\mathscr{Q}_3$ on the right
is not a triangulation since it fails condition
(d).
\end{example}

The following theorem,
the proof of which can be found in the appendix of \cite{MBSR}, guarantees
the existence of a triangulation of the vertex set of
a polytope.

\begin{theorem} \label{nonewvert}
\cite[Theorem 3.1]{MBSR}
Every convex polytope $P$ can be triangulated using no new vertices. That is,
there exists a triangulation of $\V(P)$.
\end{theorem}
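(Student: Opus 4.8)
The plan is to establish this by the \emph{pulling} (cone-from-a-vertex) construction, proceeding by induction on $\dim P$. First I would fix, once and for all, a total order on $\V(P)$; the order must be global, since it is what forces the triangulations built on different faces to agree along common subfaces. Then, for any polytope $Q$ whose vertices lie in $\V(P)$ (and so inherit the order), I would define a set $\T(Q)$ of simplices recursively: if $\dim Q = 0$, set $\T(Q) = \{Q\}$; if $\dim Q \ge 1$, let $u$ be the least vertex of $Q$, let $F_1,\dots,F_m$ be the facets of $Q$ not containing $u$, and set
\[
\T(Q) \;=\; \bigcup_{j=1}^{m}\{\,\conv(u,S) : S \in \T(F_j)\,\},
\]
each $\T(F_j)$ being produced by the same recipe, which is legitimate since $\dim F_j = \dim Q - 1$. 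The goal is then to show $\T(P)$ is a triangulation of $\V(P)$.

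The easy verifications I would dispatch first. Each $\conv(u,S)$ in $\T(Q)$ is nondegenerate of dimension $\dim Q$, because the facet $F_j \not\ni u$ has $\aff(F_j) = \aff(S)$ a hyperplane missed by $u$. Conditions (a) and (e) follow from the construction: for the covering statement in (e) it suffices to cover $\int Q$, which is dense in $Q$, and each $x \in \int Q$ lies on a ray from $u$ exiting $Q$ at a point $y$ in the relative interior of some facet $F$ with $u \notin F$ (were $u \in F$, the segment $[u,y]$, hence $x$, would lie in $F \subseteq \partial Q$), so $F = F_j$, $y$ lies in some $S \in \T(F_j)$, and $x \in \conv(u,S)$; and $\V(\T(P)) = \V(P)$ because $u$ is present and every other vertex of $P$ lies on some facet not containing $u$ by Lemma~\ref{ifaceinjface} (a vertex is the intersection of the facets through it). Condition (d) is automatic in our setting, where the point set is $\V(Q)$: each $T \in \T(Q)$ is the convex hull of a subset of $\V(Q)$, and a point of $\V(Q)$ lying in $T \subseteq Q$ is extreme in $Q$, hence a vertex of $T$. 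For (b), I would project centrally from $u$ onto $\aff(F_j)$: this carries the interior of $\conv(u,S)$ onto the relative interior of $S$, so two such simplices with bases in the same $F_j$ have disjoint interiors by the inductive hypothesis, while for bases in different $F_j,F_{j'}$ a ray from $u$ can cross $\partial Q$ only once, so the projected bases lie in distinct facets and again are disjoint.

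The real content is condition (c), and I expect the main obstacle to be a preliminary \emph{face-coherence lemma}: for every face $E$ of $Q$, the faces of the simplices of $\T(Q)$ contained in $E$ are exactly the simplices of $\T(E)$. I would prove this by induction on $\dim Q$, splitting on whether the least vertex $u$ of $Q$ lies in $E$ (then $u$ is also the least vertex of $E$, and the facets of $E$ missing $u$ are precisely the $E \cap F_j$) or not (then any face of a simplex of $\T(Q)$ contained in $E$ avoids $u$, hence is a face of some $S \in \T(F_j)$ with $S \subseteq E \cap F_j$). Granting the lemma, the facets of a simplex $T = \conv(u,S)$ with $S \in \T(F_j)$ are the base $S \subseteq F_j \subseteq \partial Q$ and the lateral facets $\conv(u,R)$ with $R$ a facet of $S$; if $R$ is shared in $\T(F_j)$ with some $S'$, then $\conv(u,R)$ is the common facet of exactly $\conv(u,S)$ and $\conv(u,S')$, while if $R \subseteq \partial F_j$ then $R$ lies in a second facet $F_{j'}$ of $Q$ and either $u \in F_{j'}$, so $\conv(u,R) \subseteq F_{j'} \subseteq \partial Q$, or $u \notin F_{j'}$ and the lemma gives $R \in \T(F_{j'})$, so $\conv(u,R)$ is the common facet of $T$ and its counterpart in the cone over $F_{j'}$. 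Taking $Q = P$ then yields the theorem. The delicate bookkeeping throughout is keeping straight, for the faces being recursed upon, whether the chosen vertex lies on the face or off it, and it is precisely to make the face-coherence lemma go through that the single global vertex order is imposed at the outset.
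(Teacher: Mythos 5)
Your proposal is correct, but it is worth noting that the paper does not actually prove this statement at all: it is quoted from Beck and Robins and the proof is explicitly deferred to the appendix of \cite{MBSR}, where the argument is an induction on the number of vertices (the ``placing'' construction: add the vertices one at a time and cone the new vertex over the facets of the current triangulation visible from it). What you give instead is the \emph{pulling} triangulation --- a recursion on dimension, coning the globally least vertex over the facets that miss it --- and as a self-contained proof it is sound. The two routes trade difficulties: the placing induction must handle visibility and the degenerate case where the newly added vertex does not increase the dimension, whereas your construction pushes all the difficulty into the face-coherence lemma (that $\T(Q)$ restricted to a face $E$ is $\T(E)$), which is exactly what makes condition (c) work across the cones over different facets and is the reason the single global vertex order is indispensable. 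That lemma is the one piece you leave as a sketch, but your case split on whether the least vertex of $Q$ lies in $E$ is the right one, and the statement is standard and true; I would only insist that you also record the small observations your verification quietly relies on, namely that a facet of a simplex of $\T(F_j)$ shared by two simplices of $\T(F_j)$ cannot also lie in $\partial F_j$, and that a $(d-2)$-face of $Q$ lies in exactly two facets of $Q$ (Lemma \ref{ifaceinjface} gives ``at least''). One cosmetic remark: in your covering argument the appeal to density and to the exit point lying in the \emph{relative interior} of a facet is unnecessary --- for any $x\in \int Q$ the exit point $y$ lies in \emph{some} facet $F$, and your own observation that $u\in F$ would force $x\in F\subseteq\partial Q$ already shows $F$ is one of the $F_j$, so every interior point is covered directly.
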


Let $P$ be a clean,
non-empty lattice $d$-polytope, and suppose $w\in \int(P)\cap \Z^d$.
We construct a {\em basic lattice triangulation} $\T_w$
of $P$ in the following 
manner.
Theorem \ref{nonewvert} guarantees that
each facet $F$ of $P$, as a $(d-1)$-polytope, 
has a (lattice) triangulation $\T_F$ of $\V(F)$. Let $\mathscr{F}$
be the set of facets of $P$ and let $\mathscr{B}$ be the set
\[ \mathscr{B} = \bigcup_{F\in \mathscr{F}} \T_F \]
of $(d-1)$-simplices. Finally, let 
$\T_w =  \{\ \conv(S,w)\ :\ S\in \mathscr{B}\ \}$.
It is easy to check that $\T_w$
is a lattice triangulation of $P$. Note that
\begin{equation} \label{sizeofbasictriangulations}
 |\T_w| = |\mathscr{B}| \ge |\mathscr{F}| \ge d+1.
\end{equation}

In particular, if 
$T\in \Skd$, where $k\ge 1$, and  $F_1,\dotsm,F_{d+1}$ are the
facets of $T$, then a
basic lattice triangulation $\T_w$ is simply the convex
hull of the facets of $T$ with an interior lattice point $w$ of $T$.
Moreover, $\T_w$ is a refinement of the trivial triangulation
$\T_0 = \{ T \}$.
The main idea in proving the collinearity property 
of $\int(T)\cap \Z^d$ is to start with
the trivial triangulation $\T_0 = \{ T \}$
and obtain a sequence 
$\T_1  \prec \dotsm \prec \T_k$
of refinements 
such that $\T_k$ is a full triangulation of $T$ and
$|\T_k|\ge dk+1$, and then show that noncollinearity
forces $|\T_k|>dk+1$. 
The following lemma appears as an assertion in
the proof of (\ref{volumeboundeq}) in \cite{CBMHJW}. We prove it here
since it is crucial in computing $|\T_i| - |\T_{i-1}|$.

\begin{lemma} \label{atleastdplusoneminusj}
Suppose $P$ is a clean, non-empty, lattice $d$-polytope.
Let $\T$ be a partial triangulation of $P$.
If $S$ is a $j$-face of some $T\in \T$, and
$\V(S)\cap \int(P)\cap \Z^d \not= \emptyset$,
then $S$ is contained in at least $d+1-j$ simplices in $\T$.
\end{lemma}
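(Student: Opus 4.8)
The plan is to exhibit $d+1-j$ simplices of $\T$ that contain $S$: the given simplex $T$ itself, together with one further simplex across each facet of $T$ that contains $S$. We may assume $j\le d-1$, since for $j=d$ we have $S=T\in\T$ and $d+1-j=1$. Write $\V(S)\subseteq\V(T)$ with $|\V(S)|=j+1$. The facets of the $d$-simplex $T$ that contain $S$ are exactly the sets $G_a:=\conv(\V(T)\setminus\{a\})$ for $a\in\V(T)\setminus\V(S)$; there are $(d+1)-(j+1)=d-j$ of them, say $G_1,\dots,G_{d-j}$ with omitted vertices $a_1,\dots,a_{d-j}$, and they are pairwise distinct (distinct omitted vertices give distinct vertex sets). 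Each $G_i$ contains $\V(S)$, hence contains the vertex $w\in\V(S)\cap\int(P)\cap\Z^d$ supplied by the hypothesis.

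Next I would rule out any $G_i$ lying on $\partial P$: a facet of a simplex that lies on $\partial P$ is contained in $\partial P$, hence disjoint from $\int(P)$, contradicting $w\in G_i\cap\int(P)$. So by condition (c) in the definition of a triangulation, each $G_i$ is a common facet of exactly two simplices of $\T$, one of which is $T$; write $T_i\ne T$ for the other one. Since $\V(S)\subseteq\V(G_i)\subseteq\V(T_i)$, the simplex $T_i$ has $S$ as a $j$-face, and in particular $S\subseteq T_i$. Thus $T,T_1,\dots,T_{d-j}$ all contain $S$.

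It then remains to verify that these $1+(d-j)=d+1-j$ simplices are pairwise distinct, and this is the only step requiring an argument. We have $T_i\ne T$ by construction. Suppose $T_i=T_{i'}=:T'$ for some $i\ne i'$, which can only occur when $d-j\ge 2$. Then $T'$ has the two distinct facets $G_i,G_{i'}$, so $\V(T')\supseteq\V(G_i)\cup\V(G_{i'})=(\V(T)\setminus\{a_i\})\cup(\V(T)\setminus\{a_{i'}\})=\V(T)$, using $a_i\ne a_{i'}$. Hence $T=\conv(\V(T))\subseteq\conv(\V(T'))=T'$, and since both are full-dimensional this forces $\emptyset\ne\int(T)\subseteq\int(T')$, contradicting the pairwise disjointness of interiors in condition (b). Therefore the $T_i$ are pairwise distinct, and $S$ is contained in at least $d+1-j$ simplices of $\T$.

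I expect no serious obstacle here: the hypotheses enter exactly once, where the interior lattice point $w$ of $S$ forbids the facets $G_i$ from lying on $\partial P$, and the ``partial'' nature of $\T$ is irrelevant since only the triangulation axioms (b) and (c) are used. The one place that needs care is the distinctness of the $T_i$, which rests on the observation that any two facets of the simplex $T$ already span all of $\V(T)$, combined with the disjointness of simplex interiors.
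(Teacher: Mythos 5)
Your proof is correct and follows essentially the same route as the paper's: identify the $d-j$ facets of $T$ containing $S$, observe that each contains the interior lattice point $w$ and hence cannot lie in $\partial P$, and conclude via condition (c) that each contributes a distinct neighboring simplex. The only difference is cosmetic: where the paper cites Lemma \ref{ifaceinjface} and asserts that two simplices of a triangulation share at most one facet, you compute the facets explicitly and prove the distinctness of the neighbors directly from the disjointness of interiors.
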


\begin{proof} Let $w\in \V(S)\cap \int(P)\cap \Z^d$. 
By Lemma \ref{ifaceinjface}, $S$ is contained in at least $d-j$ facets of $T$.
Moreover $w$ is a vertex of each facet of $T$ containing $S$. It follows
that these facets are not contained in facets of $P$ and are therefore
shared by exactly two simplices in $\T$.
On the other hand any two simplices in $\T$ intersect in at
most one common facet. Thus  $S$ is contained in at least
$d-j$ other simplices in $\T$.
\end{proof}

\begin{theorem} \label{refinementtheorem}
Let $P$ be a clean, non-empty, lattice $d$-polytope.
Let $\T$ be a partial triangulation of $P$. 
For any $w\in \int(P)\cap \Z^d \backslash \V(\T)$
there exists  a refinement $\T'$ of $\T$ such that
$\V(\T')=\V(\T)\cup \{ w \}$ and $|\T'| \ge |\T| + d$.
\end{theorem}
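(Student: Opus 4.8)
The plan is to take the point $w \in \int(P)\cap\Z^d \setminus \V(\T)$ and locate the simplex (or simplices) of $\T$ that contain it, then star-subdivide at $w$ to produce $\T'$. First I would argue that since $\bigcup_{T\in\T} T = P$ and the interiors of the $T_i$ are pairwise disjoint, $w$ lies in some face $S$ of dimension $j$ ($0 \le j \le d$) that is common to a collection $\mathscr{C} \subseteq \T$ of simplices, and $w$ lies in the relative interior of $S$. (If $w$ is in the interior of a single $d$-simplex, then $j=d$ and $|\mathscr{C}|=1$; if it lies on a shared facet, $j=d-1$ and $|\mathscr{C}|=2$; and so on.) For each $T \in \mathscr{C}$, I replace $T$ by the simplices $\conv(G, w)$ where $G$ ranges over the facets of $T$ not containing $S$ — equivalently, the facets of $T$ ``visible'' away from $w$. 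This is the standard \emph{stellar subdivision} of $T$ at the point $w$ in the relative interior of its face $S$. The new collection $\T'$ consists of all simplices of $\T$ not in $\mathscr{C}$, together with all these $\conv(G,w)$.

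The next step is to verify that $\T'$ is genuinely a lattice triangulation of $P$ refining $\T$: conditions (a), (b), (e) are immediate from the construction; condition (d) holds because $\T$ is a \emph{partial} triangulation, so $\V(\T') = \V(\T) \cup \{w\}$ and no simplex $\conv(G,w)$ can contain a point of $\V(\T')$ other than its vertices (any such point would already have violated (d) in $\T$, using that $w$ is in the relative interior of $S$). The compatibility condition (c) across the new internal facets is exactly the content of the star-subdivision being well-defined: facets $\conv(G,w)$ and $\conv(G',w)$ from neighboring original simplices match up along $\conv((G\cap G'), w)$. Finally $\V(\T)\subsetneq\V(\T')$ and each new simplex sits inside its parent $T\in\mathscr{C}$, so $\T\prec\T'$.

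The quantitative heart is the estimate $|\T'|\ge|\T|+d$. We have $|\T'| = |\T| - |\mathscr{C}| + N$, where $N$ is the total number of new simplices, and $N = \sum_{T\in\mathscr{C}} (\text{number of facets of } T \text{ not containing } S)$. Since $T$ is a $d$-simplex with $d+1$ facets and $S$ is a $j$-face of $T$, exactly $j+1$ facets of $T$ contain $S$ (the facets opposite to vertices of $S$), so each $T\in\mathscr{C}$ contributes $(d+1)-(j+1) = d-j$ new simplices. Hence $N = (d-j)|\mathscr{C}|$ and
\[
|\T'| - |\T| = (d-j-1)|\mathscr{C}|.
\]
Now I invoke Lemma~\ref{atleastdplusoneminusj}: because $w\in\V(S)$... — wait, more precisely, $w$ lies in the relative interior of $S$, but $S$ is a face of each $T\in\mathscr{C}$, and $w\in\int(P)\cap\Z^d$; I should first pass to the smallest face $S$ of each parent containing $w$ so that $w$ is actually a \emph{vertex} of $S$ only in the degenerate situation — instead, the clean way is: apply the lemma's reasoning to conclude $|\mathscr{C}| \ge d+1-j$. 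Indeed the same argument as in Lemma~\ref{atleastdplusoneminusj} shows $S$ (with $w$ in its relative interior, hence the facets of any $T\in\mathscr{C}$ containing $S$ are internal and shared by two simplices) is contained in at least $d+1-j$ simplices of $\T$, i.e. $|\mathscr{C}|\ge d+1-j$. Therefore
\[
|\T'| - |\T| = (d-j-1)|\mathscr{C}| \ge (d-j-1)(d+1-j),
\]
provided $d-j-1\ge 0$; one checks $(d-j-1)(d+1-j)\ge d$ for all integers $j$ with $0\le j\le d-1$ (the minimum of $(t-1)(t+1)=t^2-1$ over $t=d-j\ge 1$ is attained... here $t\ge1$ so $t^2-1$ ranges over $0,3,8,\dots$; the bound $t^2-1\ge d$ needs care). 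The remaining case $j=d$ (so $w$ interior to a single simplex $T$, $|\mathscr{C}|=1$) gives $|\T'|-|\T| = d-1$...

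The step I expect to be the genuine obstacle is precisely this endgame count: the naive inequality $(d-j-1)(d+1-j)\ge d$ does \emph{not} hold when $j=d-1$ (it gives $0$) or $j=d$ (it gives $d-1 < d$), so a more careful combinatorial argument is needed there — presumably one must further subdivide, or observe that when $w$ lies in a low-dimensional face one gains extra simplices from Lemma~\ref{atleastdplusoneminusj} applied more sharply, while when $w$ lies deep inside a single simplex the star subdivision alone already yields $d+1$ pieces replacing $1$, giving a net gain of $d$. In fact for $j=d$ one gets $N=d+1$ new simplices replacing $1$, so $|\T'|-|\T|=d$ exactly, and for $j=d-1$ one gets $N = 2(d-(d-1)) = 2$ replacing $2$... this forces $|\mathscr{C}|$ to be larger via the lemma, or the chosen face $S$ to be taken minimal. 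Reconciling all cases into the uniform bound $|\T'|\ge|\T|+d$ — choosing $S$ as the unique face of minimal dimension whose relative interior contains $w$ and then applying Lemma~\ref{atleastdplusoneminusj} with that $j$ — is the crux, and I would organize the proof around a case split on $j = \dim S$, handling $j=d$ directly and $j<d$ via the lemma.
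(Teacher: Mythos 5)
There is a genuine gap, and it sits exactly where you flagged trouble --- but the fix is not a new idea; it is that you have inverted the count of which facets of $T$ contain $S$. The facet of a $d$-simplex $T$ opposite a vertex $v$ contains the $j$-face $S$ if and only if $v\notin\V(S)$; hence $d-j$ facets of $T$ contain $S$ (consistent with Lemma~\ref{ifaceinjface}) and $j+1$ facets do not, namely those opposite the $j+1$ vertices of $S$. Your claim that ``exactly $j+1$ facets of $T$ contain $S$ (the facets opposite to vertices of $S$)'' is backwards, and as literally written your subdivision is degenerate: if $G$ is a facet containing $S$ then $w\in S\subseteq G$, so $\conv(G,w)=G$ is $(d-1)$-dimensional. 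The correct stellar subdivision replaces each $T\in\mathscr{C}$ by the $j+1$ simplices $\conv\bigl(\V(T)\cup\{w\}\setminus\{v_i\}\bigr)$ for $v_i\in\V(S)$. With the corrected count, $N=(j+1)|\mathscr{C}|$, so
\[
|\T'|-|\T| = j\,|\mathscr{C}| \;\ge\; j(d+1-j) \;\ge\; d \qquad (1\le j\le d),
\]
using $|\mathscr{C}|\ge d+1-j$ and the identity $j(d+1-j)-d=(d-j)(j-1)\ge 0$. (Note $j\ge 1$ automatically, since $j=0$ would force $w\in\V(\T)$.) This is precisely the paper's argument, and your two ``problem cases'' evaporate: for $j=d$ one parent is replaced by $d+1$ children for a gain of exactly $d$, and for $j=d-1$ at least two parents are each replaced by $d$ children for a gain of at least $2(d-1)\ge d$.

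Two smaller remarks. First, your appeal to Lemma~\ref{atleastdplusoneminusj} needs the extra sentence the paper supplies: the lemma's hypothesis is that some \emph{vertex} of $S$ lies in $\int(P)\cap\Z^d$, whereas you only know $w\in\mathrm{relint}(S)\cap\int(P)$; the paper bridges this by observing that $w\in\mathrm{relint}(S)\cap\int(P)$ forces $S\not\subset\partial P$, and cleanliness then yields $\V(S)\cap\int(P)\cap\Z^d\neq\emptyset$. (Alternatively, one can argue directly that any facet of $T$ containing $S$ cannot lie in $\partial P$, since it would then contain $w$, and so is shared by two simplices.) Second, your verification that $\T'$ satisfies conditions (a)--(e) is essentially right and matches the paper's construction; the theorem stands or falls on the count above.
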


\begin{proof}
Since $\T$ is a partial triangulation, 
there exists an interior lattice point $w$ of $P$
such that $w\not\in \V(\T)$.
Moreover, $w$ must lie in the relative interior
of some  $j$-face ($1 \le j \le d$), say $S$, 
of some simplex in $\T$.
Note that $S$ is in fact a $j$-simplex.
Let $\V(S) = \{ v_1,\dotsm, v_{j+1} \}$ and consider the basic
triangulation $\T_w$ of $S$ into $j$-simplices, where
\[ \T_w = \{\  \conv(\V(S)\cup \{ w \} \backslash \{ v_i \})
\ :\ 1 \le i \le j+1\ \}. \]
Clearly $\T_w$ is a refinement of $S$ into $j$-simplices.
This refinement of $S$ induces a refinement of any $d$-simplex containing $S$.
More precisely, if $T\in \T$ contains $S$, then 
the set 
\[ \{\ \conv(\V(\T) \cup \{w\} \backslash \{v_i\} )\ \} \]
is a lattice triangulation of $T$.
Since $w\in \int(S)$ and $P$ is clean, $S\not\subset \partial P$.
Thus $\V(S)\cap \int(P)\cap \Z^d \not=\emptyset$.
By Lemma \ref{atleastdplusoneminusj}, there are at least
$d+1-j$ simplices in $\T$ containing $S$ as a $j$-face.
Now consider $\T$ with all such $d$-simplices in $\T$ 
replaced with their respective induced triangulations and
take this to be $\T'$.  By construction, $w$ is contained in
a simplex  $T'\in \T'$ if and  only if $w\in \V(T')$.
Hence $\T'$ is a refinement of $\T$ such that
$\V(\T') = \V(T) \cup \{ w \}$ and
\[ |\T'| \ge |\T| + (d+1-j)(j+1) - (d+1-j) = |\T|+ (d+1-j)j. \]
Finally,
\begin{equation} \label{atleastd}
(d+1-j)j - d = (d-j)(j-1) \ge 0 \quad\text{for}\quad 1\le j\le d,
\end{equation}
which implies  $(d+1-j)j \ge d$.
\end{proof}

In the proof above,
it is important to note that equality in (\ref{atleastd}) holds
if and only if $j=d$ or $j=1$.
Equally important is that
if $\V(\T') \not= P\cap \Z^d$, then $\T'$ is again a partial
triangulation.

\begin{corollary} \label{simplextriangulation}
If $T\in \Skd$ and $\T_0 = \{ T \}$, then there exists a sequence
\begin{equation} \label{refinementsequence}
\T_0 \prec \dotsm \prec \T_k
\end{equation}
of refinements of $\T_0$
such that $\T_k$ is a full triangulation of $T$ and
$|\T_k| \ge dk+1$. Moreover, $\Vol(T) \ge \frac{1}{d!}(dk+1)$.
\end{corollary}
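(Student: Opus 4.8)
The plan is to iterate Theorem~\ref{refinementtheorem} exactly $k$ times, starting from the trivial triangulation $\T_0=\{T\}$. First I would check that $\T_0$ is indeed a partial lattice triangulation of $T$: conditions (a)--(e) hold trivially for a single nondegenerate $d$-simplex, so $\T_0$ is a lattice triangulation of $T$ with $\V(\T_0)=\V(T)$. Since $T\in\Skd$ with $k\ge 1$, the simplex $T$ has at least one interior lattice point, so $\V(T)\subsetneq T\cap\Z^d$; hence $\T_0$ is a \emph{partial} lattice triangulation, and Theorem~\ref{refinementtheorem} applies to it.

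Next I would build the sequence inductively. Suppose, for some $1\le i\le k$, that $\T_{i-1}$ has been constructed, is a partial lattice triangulation of $T$, and satisfies $\V(\T_{i-1})=\V(T)\cup\{w_1,\dotsc,w_{i-1}\}$ with $w_1,\dotsc,w_{i-1}$ distinct interior lattice points of $T$. Because $T$ has exactly $k$ interior lattice points and $i-1<k$, there is some $w_i\in\int(T)\cap\Z^d\setminus\V(\T_{i-1})$. Applying Theorem~\ref{refinementtheorem} with $P=T$, $\T=\T_{i-1}$, and $w=w_i$ produces a refinement $\T_i$ with $\V(\T_i)=\V(\T_{i-1})\cup\{w_i\}$ and $|\T_i|\ge|\T_{i-1}|+d$. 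As long as $i<k$ we have $|\V(\T_i)|=d+1+i<d+1+k$, and since $T$ is clean, $\partial T\cap\Z^d=\V(T)$, so $|T\cap\Z^d|=d+1+k$ and therefore $\V(\T_i)\ne T\cap\Z^d$; by the remark following Theorem~\ref{refinementtheorem}, $\T_i$ is again partial and the induction proceeds.

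Carrying this out for $i=1,\dotsc,k$ yields a sequence $\T_0\prec\dotsm\prec\T_k$ with $\V(\T_k)=\V(T)\cup(\int(T)\cap\Z^d)=T\cap\Z^d$, the last equality again by cleanliness, so $\T_k$ is a full lattice triangulation of $T$. Telescoping the inequalities $|\T_i|\ge|\T_{i-1}|+d$ gives $|\T_k|\ge|\T_0|+kd=dk+1$. Finally, since each $d$-simplex has volume at least $\frac{1}{d!}$, inequality~(\ref{volbytriangulation}) applied to $\T_k$ yields $\Vol(T)\ge\frac{1}{d!}|\T_k|\ge\frac{1}{d!}(dk+1)$.

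I do not expect a genuine obstacle: the substantive work is in Theorem~\ref{refinementtheorem}, and this corollary is essentially bookkeeping. The one point requiring care is ensuring that each intermediate $\T_i$ with $i<k$ is still a partial triangulation so that Theorem~\ref{refinementtheorem} can be reapplied; this is the remark immediately after that theorem, and it relies on $T$ being clean (so that its only boundary lattice points are its vertices). This is precisely why the hypothesis $T\in\Skd$ is invoked rather than merely "$T$ is a lattice $d$-simplex with $k$ interior lattice points."
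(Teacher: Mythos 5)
Your proof is correct and follows the same route as the paper: iterate Theorem~\ref{refinementtheorem} starting from the trivial triangulation, using cleanliness of $T$ to see that each intermediate stage remains partial and that $\T_k$ is full, then telescope and apply~(\ref{volbytriangulation}). The only (trivial) omission is the case $k=0$, where the sequence is just $\T_0$ and $|\T_0|=1=d\cdot 0+1$; otherwise your bookkeeping is, if anything, more careful than the paper's.
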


\begin{proof}
For $k=0$, then the sequence consists only of $\T_0$.
If $k>0$ then 
$\T_0 = \{ T \}$ is a partial triangulation of $T$.
Let $w_1,\dotsm, w_k$ be an arbitrary enumeration
of the interior lattice
points of $T$. For $1\le i \le k$,
we refine $\T_{i-1}$ into $\T_{i}$, using
Theorem \ref{refinementtheorem} with $w=w_i$. 
After $k$ refinements, 
$\T_k$ is a full triangulation of $T$, and
$|\T_k| \ge |\T_0| + dk = dk+1$. 
Lastly, (\ref{volbytriangulation}) implies
$\Vol(T)\ge \frac{1}{d!}(dk+1)$.
\end{proof}

\begin{corollary} \label{nojsimplex}
Suppose $T\in \Skd $ where $k\ge 2$. Let
$w_1,\dotsm, w_k$ be an arbitrary enumeration of
$\int(T)\cap \Z^d$, and let (\ref{refinementsequence})
be the corresponding refinement sequence guaranteed by Corollary
\ref{simplextriangulation}.
If $w_{i+1}$ lies in a $j$-face of a simplex in $\T_i$, 
where $i>0$ and $1<j<d$, then
$|\T_{i+1}|-|\T_i|>d$ and
$\Vol(T) > \frac{1}{d!}(dk+1)$.
\end{corollary}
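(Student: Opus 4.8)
The plan is to unwind the proof of Theorem~\ref{refinementtheorem} at the single refinement step in question. By Corollary~\ref{simplextriangulation} the sequence (\ref{refinementsequence}) is built by applying Theorem~\ref{refinementtheorem} with $w=w_1,\dotsm,w_k$ in turn, so $\T_{i+1}$ is obtained from $\T_i$ by that theorem with $w=w_{i+1}$. Since $w_{i+1}$ exists and $i>0$ we have $1\le i\le k-1$, hence $\V(\T_i)=\V(T)\cup\{w_1,\dotsm,w_i\}$ is a proper subset of $T\cap\Z^d$ and $\T_i$ is genuinely a partial triangulation, so the theorem legitimately applies. Inside that proof, $w_{i+1}$ lies in the relative interior of a $j$-face $S$ (a $j$-simplex) of some simplex of $\T_i$ --- this $j$ is the one named in the hypothesis --- and the estimate produced there reads
\[ |\T_{i+1}| \;\ge\; |\T_i| + (d+1-j)j. \]

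From here the first claim is immediate: since $1<j<d$, both $d-j$ and $j-1$ are positive integers, so (\ref{atleastd}) gives $(d+1-j)j-d=(d-j)(j-1)\ge 1$, whence $|\T_{i+1}|-|\T_i|\ge d+1>d$. For the volume statement I would sum the increments along (\ref{refinementsequence}): Theorem~\ref{refinementtheorem} gives $|\T_{m+1}|-|\T_m|\ge d$ for every $0\le m\le k-1$, while the previous line improves the increment at $m=i$ (which lies in this range) to at least $d+1$; since $|\T_0|=1$,
\[ |\T_k| \;=\; |\T_0| + \sum_{m=0}^{k-1}\bigl(|\T_{m+1}|-|\T_m|\bigr) \;\ge\; 1 + (k-1)d + (d+1) \;=\; dk+2, \]
and then (\ref{volbytriangulation}) yields $\Vol(T)\ge\frac{1}{d!}|\T_k|\ge\frac{1}{d!}(dk+2)>\frac{1}{d!}(dk+1)$.

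There is no genuinely hard step here; the corollary is a one-line consequence of the displayed inequality inside the proof of Theorem~\ref{refinementtheorem} together with the equality analysis of (\ref{atleastd}) already recorded just after it. The only point that demands attention is the bookkeeping interpretation of the phrase ``$w_{i+1}$ lies in a $j$-face'': it must be read as saying that the minimal face of $\T_i$ whose relative interior contains $w_{i+1}$ has dimension exactly $j$ (consistently with the usage in Theorem~\ref{refinementtheorem}). If instead $w_{i+1}$ merely lay inside some $j$-face while actually sitting in the relative interior of an edge of it, the relevant increment would be only $d$ and the conclusion would fail, so it is worth stating this reading explicitly.
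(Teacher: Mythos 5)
Your proof is correct and follows the same route as the paper: the paper's own argument is just the one-line observation that the increment at the step where $w_{i+1}$ is inserted is $(d+1-j)j \ge d + (d-j)(j-1) \ge d+1$ when $1<j<d$, by Theorem~\ref{refinementtheorem} and (\ref{atleastd}), and then the telescoping sum gives $|\T_k|\ge dk+2$ and the volume bound via (\ref{volbytriangulation}). Your explicit bookkeeping, and your remark that ``lies in a $j$-face'' must mean the minimal face whose relative interior contains $w_{i+1}$, only make the intended argument more precise.
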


\begin{proof}
This follows immediately from Theorem \ref{refinementtheorem} and
(\ref{sizeofbasictriangulations}), and (\ref{atleastd}).
\end{proof}

In the context of Corollary \ref{simplextriangulation},
(\ref{atleastd}) implies that for
each $\T_i$ in (\ref{refinementsequence}),
$|\T_i| \ge d i+1$.
Since equality in (\ref{atleastd}) holds if and only if $j=1$
or $j=d$,   $|\T_i| = d i+1$ if and only if for each $i$,
 $w_{i+1}$ lies in
the relative interior of an edge in $\T_i$, or in the relative
interior of a simplex in $\T_i$. Note that this must be true
for any ordering of the $w_i$'s.

Finally, to prove (\ref{volumeboundeq}) for a general
non-empty lattice $d$-polytope $P$,
we start with a basic triangulation of $P$ and obtain a refinement sequence
similar to that of Corollary \ref{simplextriangulation}.

\begin{corollary} \label{equivproof}
\cite[Theorem 1.2]{CBMHJW} 
If $P$ is a lattice $d$-polytope with $k\ge 0$ interior
lattice points, then there exists a sequence
\[\T_1 \prec \dotsm \prec \T_k \]
of lattice triangulations of $P$ such that $\T_1$
is a basic triangulation of $P$, $\T_k$ is a full triangulation
of $P$ and $|\T_k|\ge dk+1$. Moreover, $\Vol(P) \ge \frac{1}{d!}(dk+1)$.
\end{corollary}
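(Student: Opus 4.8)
The plan is to imitate the proof of Corollary~\ref{simplextriangulation}, but to start the refinement sequence from a \emph{basic} triangulation rather than from the one-simplex triangulation. The point is that a basic triangulation already has at least $d+1$ simplices and already contains one interior lattice point in its vertex set, so it accounts for the first unit of ``$dk+1$'' by itself and only $k-1$ further refinements are required. The case $k=0$ is immediate: Theorem~\ref{nonewvert} furnishes a lattice triangulation of $\V(P)$, which is a lattice triangulation of $P$ with at least one simplex, so $\Vol(P)\ge\frac{1}{d!}$ follows from~(\ref{volbytriangulation}).

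So suppose $k\ge 1$ and enumerate $\int(P)\cap\Z^d=\{w_1,\dots,w_k\}$. I would take $\T_1=\T_{w_1}$ to be a basic triangulation of $P$ built from $w_1$; by~(\ref{sizeofbasictriangulations}) we have $|\T_1|\ge d+1$, and $\V(\T_1)=\V(P)\cup\{w_1\}$. For $i=2,\dots,k$ I would apply Theorem~\ref{refinementtheorem} with $w=w_i$ to $\T_{i-1}$: since $w_i\in\int(P)\cap\Z^d$ is distinct from $w_1,\dots,w_{i-1}$ and is not a vertex of $P$, it lies in $\int(P)\cap\Z^d\setminus\V(\T_{i-1})$, so $\T_{i-1}$ is a partial triangulation and the theorem yields a refinement $\T_i$ with $\V(\T_i)=\V(\T_{i-1})\cup\{w_i\}$ and $|\T_i|\ge|\T_{i-1}|+d$. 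Telescoping gives $|\T_k|\ge|\T_1|+(k-1)d\ge(d+1)+(k-1)d=dk+1$, and then $\Vol(P)\ge\frac{1}{d!}|\T_k|\ge\frac{1}{d!}(dk+1)$ by~(\ref{volbytriangulation}).

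After this process $\V(\T_k)=\V(P)\cup(\int(P)\cap\Z^d)$, so the only lattice points of $P$ not yet in $\V(\T_k)$ are the non-vertex boundary lattice points. Each such point lies in the relative interior of a positive-dimensional face of some simplex of $\T_k$ (it cannot lie in the interior of a simplex, since that interior is contained in $\int(P)$), so one can subdivide exactly as in the proof of Theorem~\ref{refinementtheorem}; this is again a refinement and does not decrease the simplex count. Absorbing all of these boundary subdivisions into the final step of the sequence (a single refinement is permitted to add several vertices), one obtains a full triangulation $\T_k$ still satisfying $|\T_k|\ge dk+1$. I expect the only delicate point to be bookkeeping rather than geometry: one must verify that the ``$+1$'' in $dk+1$ is supplied by the basic triangulation, so that exactly $k-1$ refinement steps — each contributing the $d$ new simplices guaranteed by Theorem~\ref{refinementtheorem} — suffice. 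One should also note that although the basic triangulation, Lemma~\ref{atleastdplusoneminusj}, and Theorem~\ref{refinementtheorem} are phrased for \emph{clean} polytopes while $P$ here need not be clean, cleanness is never actually invoked in those arguments beyond the interiority of the distinguished lattice point, so they apply verbatim.
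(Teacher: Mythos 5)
Your proof is correct and follows essentially the same route as the paper: start from the basic triangulation $\T_{w_1}$ (which contributes the initial $d+1$ simplices), apply Theorem~\ref{refinementtheorem} to $w_2,\dots,w_k$, and telescope to $|\T_k|\ge (d+1)+(k-1)d=dk+1$. You are in fact slightly more careful than the paper's own argument, which passes over both the non-vertex boundary lattice points (needed for $\T_k$ to be genuinely full when $P$ is not clean) and the observation that the cleanness hypotheses of the supporting lemmas are not essentially used.
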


\begin{proof}
If $k=0$, take $\T_1$ to be the triangulation guaranteed by Theorem
\ref{nonewvert}. If $k\ge 1$, then consider an arbitrary
enumeration $w_1,\dotsm,w_k$ of the the interior points
of $P$. Let $\T_1$ be the basic triangulation $\T_{w_1}$ of $T$.
Either $k=1$ and $\T_1$ is a full triangulation, or
we can apply Theorem \ref{refinementtheorem}, as
in the proof of Corollary \ref{simplextriangulation}, to
obtain a refinement sequence
\[ \T_1 \prec \dotsm \prec \T_k \]
such that $|\T_k| \ge dk+1$. Finally, (\ref{volbytriangulation})
implies $\Vol(P)\ge \frac{1}{d!}(dk+1)$.
\end{proof}

In general, we
need not start with a basic triangulation of $P$.
It is easy to check that so long
as $\T$ is a partial triangulation 
such that 
\[
 |P\cap \Z^d| - |\V(\T)| = k-j \quad \text{and} \quad |\T| \ge dj+1,
\]
then we can still refine $\T$ into a full triangulation with
at least $dk+1$ simplices by applying Theorem \ref{refinementtheorem}.
This is in fact equivalent to the inductive step in \cite{CBMHJW},
with triangulations replaced by decompositions into $d$-subpolytopes.
The proof of Corollary \ref{equivproof} is otherwise essentially the same
as that in \cite{CBMHJW}.

%
%

\section{Properties of $d+2$ Points in $\R^d$}

In the context of Corollary \ref{nojsimplex}, if
we can show $j=d$ implies $\Vol(T) > \frac{1}{d!}(dk+1)$ as well,
then any two interior lattice points of $T$ must be collinear
with some vertex of $T$. 
As a base case, we first
consider the possible configurations of
any $T\in \S_2^d$, where $d\ge 3$.
Suppose $T$ has interior
lattice points $w_1$ and $w_2$,  and let $\V(T) = \{ v_1 ,\dotsm, v_{d+1} \}$.
The sets 
\begin{equation} \label{pisets}
\P_i = \V(T)\backslash \{ v_i \} \cup \{ w_1, w_2 \}
\end{equation}
are all sets of $d+2$ points not contained  in a hyperplane.
Many  properties of such sets are discussed in 
\cite{MB}, \cite{WHJK}, \cite{LK}, \cite{CLL}, \cite{BBP},
\cite{IVP}, and \cite{JR}.
Two relevant and well known results in the theory of convex bodies
are Carath\'eodory's theorem \cite{CC} and Radon's theorem \cite{JR}.

\begin{caratheodorys} 
If $\P = \{ v_1,\dotsm, v_{d+1} \}
\subset \R^d$  is not contained in a hyperplane,
then every $x\in \R^d$ 
can be expressed as
\[ x = \sum_{i=1}^{d+1} \alpha_i v_i,
\quad\text{where}\quad
\alpha_i \in \R, 
\quad v_i \in \P,
\quad \text{and} \quad
\sum_{i=1}^{d+1} \alpha_i = 1. \]
\end{caratheodorys}

The coefficients $\alpha_i$ in Carath\'eodory's theorem
are the {\em barycentric coordinates} of $x$
relative to $\conv(\P)$.
If   $P=\conv(\P)$ is a $d$-simplex, then the barycentric
coordinates of $x$ relative to the simplex $P$ are
the numbers $\alpha_1,\dotsm,\alpha_{d+1}$ satisfying
\[
\underbrace{
\left[ \begin{array}{cccc}
\alpha_1 & \alpha_2 & \dotsm & \alpha_{d+1} 
\end{array} \right]}_{1\times (d+1)}
\cdot 
\underbrace{
\left[ \begin{array}{cc}
v_1 & 1 \\
v_2 & 1 \\
\vdots  & \vdots \\
v_{d+1} & 1 \\
\end{array}
\right]}_{(d+1)\times (d+1)}
= 
\underbrace{\left[ \begin{array}{cc}
x & 1 \end{array} \right]}_{1\times (d+1)} .
\]
For each $i$, the sign of $\alpha_i$ indicates the position
of $x$ relative to the hyperplane $H_i$ containing
the facet of $P$ opposite vertex $v_i$. That is, $\alpha_i>0$
when $v_i$ and $x$ are on the same side of $H_i$,
$\alpha_i < 0$ if $v_i$ and $x$ are on opposite sides of $H_i$,
and $\alpha_i = 0$ if $x$ lies in $H_i$. The barycentric
coordinates of $x$ relative to $T$ are all positive if and only if
$x\in \int(P)$.
Thus any point $x\in \P_i$ (cf. (\ref{pisets})), 
can be described  in terms 
of its barycentric coordinates relative to the simplex $\conv(\P_i)$.

\begin{radons}
If $\A$ is a set of $k\ge d+2$ points in $\R^d$, then there exist
a partition $\{ \A_1, \A_2 \}$ of $\A$
such that $\A = \A_1 \cup \A_2$
and $\conv(\A_1) \cap \conv(\A_2) \not=\emptyset$.
\end{radons}

The partition in Radon's theorem is called
a {\em Radon partition} of $\A$. A Radon partition
 {\em in} $\A$ is a Radon partition of a subset of $\A$.
Let  $\{\A_1, \A_2 \}$ and  $\{\A_1', \A_2' \}$ be Radon partitions
of $\A$.
Then  $\{\A_1, \A_2 \}$ {\em extends} 
$\{ \A_1', \A_2' \}$ provided $\A_i' \subseteq \A_i$.
In \cite{WHJK},
Hare and Kennely introduced the notion of a
{\em primitive Radon partition}, a Radon partition that
is minimal with respect to extension.
An immediate consequence is that 
if $\{ \A_1, \A_2 \}$ is a Radon partition of $\A$,
then there exists a primitive Radon partition in $\A$ such that
$\{ \A_1, \A_2 \}$ extends it.
Breen proved in
\cite{MB} that  $\{ \A_1, \A_2\}$ is
a primitive Radon partition in $\A$ if and only if $\A_1\cup \A_2$
is in general position in $\R^{|\A_1|+|\A_2|}$. Recall that
$\A_1 \cup \A_2$ is in general position if no $j+2$ points in this union
lie in a $j$-flat for all $j<|\A_1|+|\A_2|$.
Peterson proved in \cite{BBP} that
the Radon partition of $d+2$ points in general position in $\R^d$
is unique, and both Breen and Peterson showed that if
$\A_1\cup \A_2$ is in general position in $\R^{|\A_1|+|\A_2|}$,
then $\conv(\A_1) \cap \conv(\A_2)$
is a single point.
Lastly, Proskuryakov proved in
\cite{IVP} that if $\P \subset \R^d$ is a set of $d+2$
points in general position, then two points will lie in the same
component of the (unique) Radon partition of $P$ if and only if
they are separated by the hyperplane through the remaining
$d$ points. Kosmak also proved this result in \cite{LK} using
affine varieties. These properties of Radon partitions are
equivalent to Lawson's First and Second Theorems from \cite{CLL}.

\begin{lawsonsfirst} 
\cite[Theorem 1]{CLL}
Let $\P = \{ v_1,\dotsm,v_{d+2} \} \subset\R^d$ and suppose $\P$
does not lie entirely in any hyperplane. There is
a partition of $\P$ 
into three sets $\A_0$, $\A_1$, and $\A_2$, 
and $\alpha_i \in \R$, satisifying
\begin{eqnarray}
\label{uniqueintersection}
  \sum_{v_i\in \A_1} \alpha_i v_i & = & \sum_{v_i\in \A_2} \alpha_i v_i,\\
\label{sumisone}
  \sum_{v_i\in \A_1} \alpha_i & = &\sum_{v_i\in \A_2} \alpha_i = 1,\\
\label{aiispositive}
  \alpha_i > 0 & \text{if} & v_i\in \A_1 \cup \A_2.
\end{eqnarray}
The numbers $\alpha_i$ are uniquely determined by the set $\P$. 
We set $\alpha_i = 0$ if $v_i \in \A_0$.
The sets $\A_0$ and $\{ \A_1, \A_2 \}$ are also unique.
\end{lawsonsfirst}

Since the sets $\A_1$ and $\A_2$ in Lawson's First Theorem
form the unique primitive Radon partition in $\P$, the point
\[ \conv(\A_1) \cap \conv(\A_2) = \sum_{v_i \in \A_1} \alpha_i v_i =
\sum_{v_i\in \A_2} \alpha_i v_i, \]
is in the relative interior of $\conv(\A_1)$ and $\conv(\A_2)$
by (\ref{uniqueintersection}), (\ref{sumisone}), 
and (\ref{aiispositive}).
These sets also determine the possible triangulations of 
$\conv(\P)$.

\begin{lawsonssecond} \cite[Theorem 2]{CLL}
Let $\P = \{ v_1,\dotsm, v_{d+2}\} \subset \R^d$,
and let $T_i$ be the simplex with vertex set
$\V(T_i) =\P\backslash \{ v_i \}$. 
There are at most two distinct triangulations of $P=\conv(\P)$,
namely 
\[ \T_1 = \{ T_i : v_i\in \A_1 \} \quad\text{and}\quad
\T_2 = \{ T_i : v_i \in \A_2 \}, \]
where the sets $\A_1$ and $\A_2$ are as defined in
Lawson's First Theorem.
The set  $\T_j$
is a valid triangulation
if and only if $|\A_j|>1$, where $j\in \{1,2\}$.
\end{lawsonssecond}

\begin{corollary} \label{lawsonscorollary}
\cite[Corollary 1]{CLL}
In the context of building triangulations, an enumeration of all possible
configurations of $d+2$ points in $\R^d$,
not lying in any hyperplane, is given by all of the possible ways
of assigning values to $|\A_0|$, $|\A_1|$, and
$|\A_2|$ satisfying
\begin{eqnarray}
\label{aoneisatleastone}
  |\A_2| & \ge & |\A_1| \ge  1,\\
\label{atwoisatleasttwo}
  |\A_2| & \ge & 2,\\
\label{azeroisatleastzero}
  |\A_0| & \ge & 0,\\
\label{sumofaisdplusone}
  |\A_0| + |\A_1| + |\A_2| & = & d+1.
\end{eqnarray}
\end{corollary}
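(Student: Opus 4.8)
\emph{Proof proposal.} The plan is to read the statement off from Lawson's First and Second Theorems, using only that $\P$ consists of $d+2$ \emph{distinct} points. By Lawson's First Theorem, every configuration $\P\subset\R^d$ of $d+2$ points not lying in a hyperplane determines a unique partition into $\A_0$, $\A_1$, $\A_2$ with $\{\A_1,\A_2\}$ the primitive Radon partition in $\P$; after relabeling we may assume $|\A_1|\le|\A_2|$. So the triple $(|\A_0|,|\A_1|,|\A_2|)$ is a well-defined invariant of $\P$, and the task reduces to showing that its admissible values are exactly those satisfying (\ref{aoneisatleastone})--(\ref{sumofaisdplusone}).

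Three of the four conditions are essentially immediate. Equation (\ref{sumofaisdplusone}) and inequality (\ref{azeroisatleastzero}) hold simply because $\{\A_0,\A_1,\A_2\}$ partitions the point set $\P$; and the lower bound $|\A_1|\ge 1$ in (\ref{aoneisatleastone}) holds because Radon's theorem gives $\conv(\A_1)\cap\conv(\A_2)\not=\emptyset$, so neither part of the partition is empty, while $|\A_2|\ge|\A_1|$ is just the chosen ordering. The only genuine point is (\ref{atwoisatleasttwo}): if one had $|\A_1|=|\A_2|=1$, say $\A_1=\{v_a\}$ and $\A_2=\{v_b\}$, then by (\ref{uniqueintersection})--(\ref{aiispositive}) the common point $\conv(\A_1)\cap\conv(\A_2)$ equals $v_a=v_b$, contradicting distinctness of the points of $\P$; hence $|\A_2|\ge 2$.

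It then remains to justify the word ``enumeration'', i.e., that listing all triples meeting (\ref{aoneisatleastone})--(\ref{sumofaisdplusone}) really does list every case relevant to building triangulations. For this I would invoke Lawson's Second Theorem: the triangulations of $\conv(\P)$ are exactly $\T_1=\{T_i:v_i\in\A_1\}$ and $\T_2=\{T_i:v_i\in\A_2\}$, with $\T_j$ a valid triangulation precisely when $|\A_j|>1$. Hence the number of triangulations of $\conv(\P)$, the number of simplices in each, and which of the points occur as simplex vertices are all determined by $|\A_0|$, $|\A_1|$, $|\A_2|$ alone, independently of the coordinates of $\P$; so the finitely many admissible triples exhaust the combinatorial possibilities. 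The step I expect to need the most care is exactly this last one --- pinning down the precise sense in which the cardinality triple is a complete invariant ``for the purpose of building triangulations'' --- since the inequality arguments are short once distinctness is used, whereas here one must check that nothing about the $T_i$ beyond what Lawson's Second Theorem records is needed downstream.
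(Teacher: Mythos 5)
The paper offers no proof of this corollary at all --- it is quoted as Corollary 1 of \cite{CLL} --- so there is no internal argument to compare against, and deriving it from Lawson's First and Second Theorems as you do is the natural route. Your necessity arguments are sound: $|\A_1|\ge 1$ because both classes of a Radon partition are nonempty, and $|\A_2|\ge 2$ because two singletons would force $\alpha_a=\alpha_b=1$ and hence $v_a=v_b$ by (\ref{uniqueintersection})--(\ref{sumisone}), contradicting distinctness. However, your justification of (\ref{sumofaisdplusone}) --- that it ``holds simply because $\{\A_0,\A_1,\A_2\}$ partitions $\P$'' --- actually proves $|\A_0|+|\A_1|+|\A_2|=|\P|=d+2$, not the $d+1$ printed in the statement. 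The printed equation is a typo (the paper's own example for $d=3$ produces the triples $(0,2,3)$ and $(1,2,2)$, both summing to $5=d+2$), but you cannot both assert the equation as written and derive it from the partition property; you should have flagged the discrepancy and proved the corrected equation.

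The more substantive gap is in the word ``enumeration.'' You show that every configuration yields an admissible triple, and that the triple determines the number and sizes of the (at most two) triangulations via Lawson's Second Theorem; but you never show the converse, namely that every triple satisfying the (corrected) constraints is actually realized by some set of $d+2$ points in $\R^d$ not lying in a hyperplane. Without realizability your list of triples is only a superset of the configurations that occur, which is not an enumeration of ``all possible configurations.'' The repair is short and should be stated: given an admissible $(a_0,a_1,a_2)$ with $a_1+a_2=m+2$, realize a primitive Radon partition of type $(a_1,a_2)$ by $m+2$ points in general position in an $m$-flat (for instance, $\A_2$ the vertex set of an $(a_2-1)$-simplex $S$ and $\A_1$ the vertex set of an $(a_1-1)$-simplex meeting $S$ in a single common relative-interior point, positioned in general position), then adjoin $a_0$ further points affinely independent of that flat so that the union spans $\R^d$; by the results of Breen and Peterson quoted in the paper, the resulting primitive Radon partition is exactly $\{\A_1,\A_2\}$ with the added points in $\A_0$. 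Only after this step does the finite list of triples genuinely enumerate the configurations. (For the paper's downstream use --- ruling out cases for five points in $\R^3$ --- only the necessity direction is needed, so this gap does not propagate, but it is a gap in the proof of the corollary as stated.)
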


Consider the possible sets of five points in $\R^3$
such that the convex hull is a nondegenerate
polyhedron with two different triangulations.
Lawson's Second Theorem implies
$|\A_1| \ge 2$. Inequalities
(\ref{aoneisatleastone}), (\ref{atwoisatleasttwo}),
(\ref{azeroisatleastzero}), and (\ref{sumofaisdplusone}) imply
either
\[ (|\A_0|,|\A_1|, |\A_2|) = (0,2, 3) \quad \text{or}\quad
(|\A_0|,|\A_1|, |\A_2|) = (1,2, 2). \]

\begin{example} \label{bitetraexample}
Suppose $\P = \{ v_1, \dotsm, v_5 \} \subset \R^3$
is in general position. As shown on the left in Figures \ref{bistellara}
and \ref{bistellarb}, let $P = \conv(\P)$.
In the context of Lawson's First Theorem,
$\A_0 = \emptyset$,
$\A_1 = \{ v_1, v_5 \}$, and $\A_2 = \{ v_2, v_3, v_4 \}$.
The polyhedron $P$ is a ``bipyramid'' with triangulations
\[ \T_1 = \{\ \conv(v_1, v_2, v_3, v_4),\ \conv(v_2, v_3, v_4, v_5)\ \} \]
and
\[ \T_2 = \{\ \conv(v_1, v_2, v_3, v_5),\
 \conv(v_1, v_2, v_4, v_5),\ \conv(v_1, v_3, v_4, v_5)\ \} \]
by Lawson's Second Theorem.
\end{example}

\begin{figure}[h]
\centering
\psfrag{v1}{\smaller $v_1$}
\psfrag{v2}{\smaller $v_2$}
\psfrag{v3}{\smaller $v_3$}
\psfrag{v4}{\smaller $v_4$}
\psfrag{v5}{\smaller $v_5$}
\psfrag{P}{\smaller $P$}
\psfrag{T(P)}{\smaller $\T_1$}
\includegraphics[scale=1]{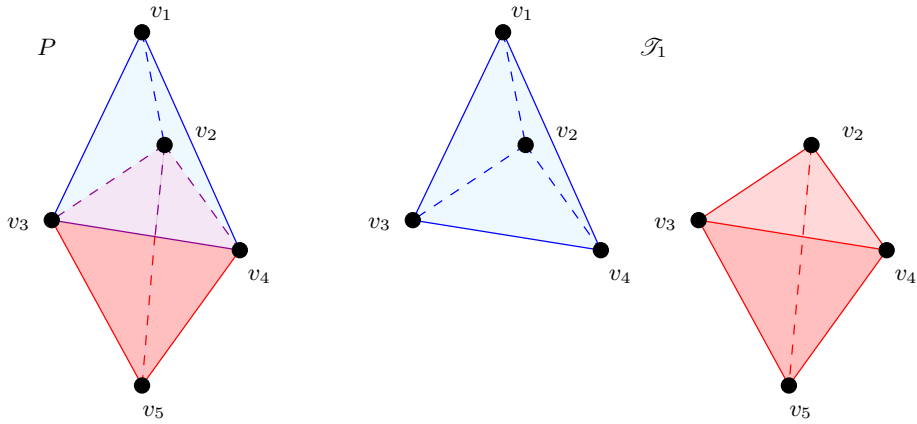}
\caption{$P$ (left) and the two simplices of $\T_1$ (right)\label{bistellara} }
\end{figure}

\begin{figure}[h] 
\centering
\psfrag{v1}{\smaller $v_1$}
\psfrag{v2}{\smaller $v_2$}
\psfrag{v3}{\smaller $v_3$}
\psfrag{v4}{\smaller $v_4$}
\psfrag{v5}{\smaller $v_5$}
\psfrag{P}{\smaller $P$}
\psfrag{T(P)}{\smaller $\T_2$}
\includegraphics[scale=1]{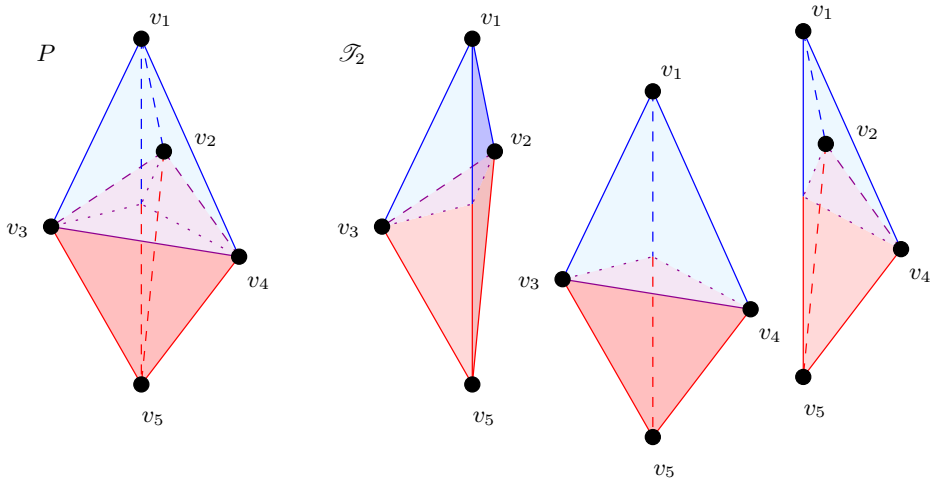}
\caption{$P$ (left) and 
and the three simplices of $\T_2$
(right) \label{bistellarb} }
\end{figure}

\begin{example} \label{pyramidexample}
Consider $\P = \{ v_1, \dotsm, v_5 \} \subset \R^3$
and $P=\conv(\P)$ shown on the left in 
Figure \ref{pyramid}.
The polyhedron $P$
is a nondegenerate  pyramid
whose base is a planar quadrilateral with vertex set
$\{ v_2, v_3, v_4, v_5\}$.
In the context of Lawson's First Theorem,
$\A_0 = \{ v_1 \}$,
$\A_1 = \{ v_2, v_4 \}$ and $\A_2 = \{ v_3, v_5 \}$.
Lawson's Second Theorem implies $P$ has two 
triangulations
\[ \T_1 = \{\ \conv(v_1, v_3, v_4, v_5),\
\conv(v_1, v_2, v_3, v_5)\ \} \]
and
\[ \T_2 = \{\ \conv(v_1, v_2, v_4, v_5),\
\conv(v_1, v_2, v_3, v_4)\ \}. \]
Each triangulation is induced
by the triangulation (in dimension 2) of the base.
\end{example}

\begin{figure}[h] 
\centering
\psfrag{v1}{\smaller $v_1$}
\psfrag{v2}{\smaller $v_2$}
\psfrag{v3}{\smaller $v_3$}
\psfrag{v4}{\smaller $v_4$}
\psfrag{v5}{\smaller $v_5$}
\psfrag{P}{\smaller $P$}
\psfrag{T1(P)}{\smaller $\T_1$}
\psfrag{T2(P)}{\smaller $\T_2$}
\includegraphics[scale=1]{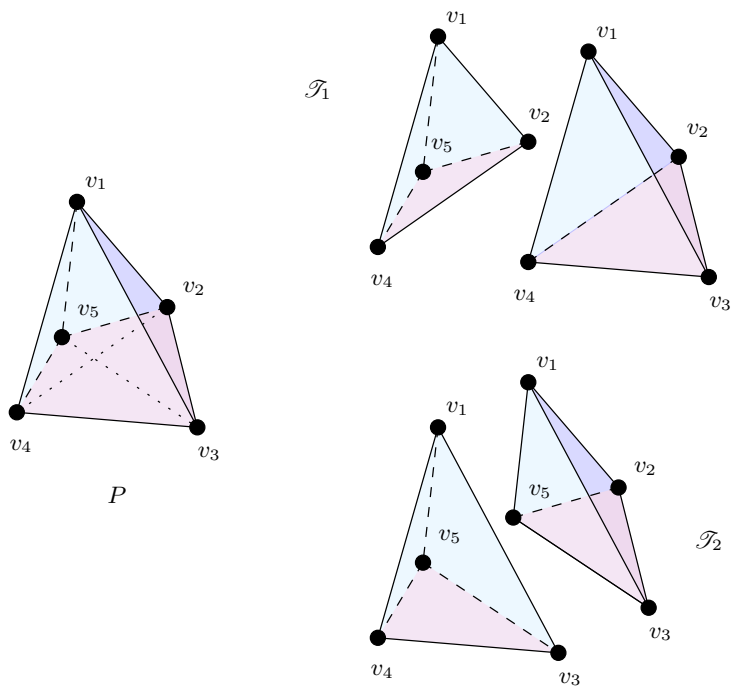}
\caption{$P$ (left) and both of its
triangulations (right) \label{pyramid} }
\end{figure}

In both Examples \ref{bitetraexample} and \ref{pyramidexample},
$|\A_1| = 2$, and  $\conv(\A_1 \cup \A_2)$
can be viewed as two simplices joined at a common facet. 
Moreover, the line segment formed by the vertices not in 
the common facet intersects that facet at a point. Such
a configuration is called a bipyramid.
In general, a $d$-polytope $P$
is a {\em $d$-bipyramid} if $P$ is the convex hull of
a line segment $L$ and a $(d-1)$-simplex $S$ such that
the intersection $L\cap S$ is a single point contained in
 $\int(L)\cap\int(S)$.
A $1$-bipyramid is simply a line segment 
with a point in its interior (or the 
convex hull of three collinear points).
In Example \ref{bitetraexample}, $P$ itself is a 3-bipyramid,
whereas  in Example \ref{pyramidexample}
$P$ contains a 2-bipyramid (its base).
The following result is a direct consequence of Lawson's theorems.

\begin{corollary} \label{hasbipyramid}
Let $P$ be a $d$-polytope with $d+2$ vertices. If there exists a
subset $\P\subseteq \V(P)$ such that $\conv(\P)$ is 
a $j$-bipyramid, where $2\le j \le d$, then $\V(P)$ 
has exactly triangulations,
one with cardinality 2 and another
with cardinality $j$.
\end{corollary}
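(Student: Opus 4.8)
The plan is to identify the partition $\{\A_1,\A_2\}$ that Lawson's First Theorem attaches to the $d+2$ points $\V(P)$ with the two-block partition furnished by the bipyramid, and then to read off the count of triangulations from Lawson's Second Theorem (``exactly two'' being the evident intent of the statement). Since $P$ is a $d$-polytope, $\V(P)$ is a set of $d+2$ points not contained in any hyperplane, so Lawson's First Theorem gives unique sets $\A_0$, $\{\A_1,\A_2\}$ and reals $\alpha_i$. The structural fact I would record first is that the $(d+1)\times(d+2)$ matrix whose columns are $(v_i,1)$ has rank $d+1$ (as $\V(P)$ is not contained in a hyperplane), so the space $D=\{(\beta_i)\in\R^{d+2}:\sum_i\beta_i v_i=\origin,\ \sum_i\beta_i=0\}$ of affine dependences of $\V(P)$, which is precisely its kernel, is one-dimensional. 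Lawson's data supply a distinguished nonzero element of $D$: put $\beta_i=\alpha_i$ for $v_i\in\A_1$, $\beta_i=-\alpha_i$ for $v_i\in\A_2$, and $\beta_i=0$ for $v_i\in\A_0$; then $(\beta_i)\in D$ by (\ref{uniqueintersection}) and (\ref{sumisone}), it is nonzero since $\A_2\ne\emptyset$, its zero set is $\A_0$, its positive coordinates index $\A_1$, and its negative coordinates index $\A_2$ by (\ref{aiispositive}).

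Next I would manufacture a second element of $D$ from the hypothesis. Write the $j$-bipyramid as $\conv(\P)=\conv(L\cup S)$ with $L$ a segment, $S$ a $(j-1)$-simplex, and $L\cap S$ a single point $p$ lying in the relative interior of each. It is standard (a routine check) that the vertices of such a bipyramid are exactly the two endpoints of $L$ together with the $j$ vertices of $S$; since the vertices of $\conv(\P)$ lie in $\P\subseteq\V(P)$, these $j+2$ points all belong to $\V(P)$, and they are distinct (no endpoint of $L$ is a vertex of $S$ and conversely, because $p$ is in the relative interior of both). Expressing $p$ once as a convex combination of the two endpoints of $L$ and once as a convex combination of the $j$ vertices of $S$ --- all coefficients strictly positive, since $p$ lies in the relative interiors --- and subtracting the two relations produces a nonzero $(\gamma_i)\in D$ whose positive coordinates index exactly $\V(L)$, whose negative coordinates index exactly $\V(S)$, and whose zero set is $\V(P)\setminus(\V(L)\cup\V(S))$.

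Now $\dim D=1$ forces $(\gamma_i)$ to be a nonzero scalar multiple of $(\beta_i)$; comparing zero sets and signs (the sign of the scalar deciding which of $\A_1,\A_2$ is which) gives $\A_0=\V(P)\setminus(\V(L)\cup\V(S))$ and $\{\A_1,\A_2\}=\{\V(L),\V(S)\}$, hence $\{|\A_1|,|\A_2|\}=\{2,j\}$. Because $j\ge 2$ we have $|\A_1|>1$ and $|\A_2|>1$, so Lawson's Second Theorem says that $\T_1$ and $\T_2$ are both genuine triangulations of $P$, that they are the only ones, and that $|\T_1|=|\A_1|$ and $|\T_2|=|\A_2|$; these are $2$ and $j$ in one order or the other, and the two triangulations are distinct since $\A_1\cap\A_2=\emptyset$. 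That is the assertion. The point needing care is the small polytope-theoretic bookkeeping --- that $\V(L)$ and $\V(S)$ are genuine disjoint subsets of $\V(P)$, so that the bipyramid's dependence really is an affine dependence of all $d+2$ points padded with zeros, and that a relative-interior point of a simplex has strictly positive barycentric coordinates; granted these, the one-dimensionality of $D$ does everything else.
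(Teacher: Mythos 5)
Your proof is correct and follows the route the paper intends: the paper offers no written proof for this corollary (it is asserted as ``a direct consequence'' of Lawson's theorems), and your argument supplies exactly the missing identification of $\{\A_1,\A_2\}$ with $\{\V(L),\V(S)\}$ before invoking Lawson's Second Theorem. Using the one-dimensionality of the space of affine dependences of $d+2$ affinely spanning points to force that identification is a clean and complete way to make the ``direct consequence'' rigorous, and your bookkeeping (disjointness of $\V(L)$ and $\V(S)$, strict positivity of relative-interior barycentric coordinates, padding with zeros) covers the points that actually need checking.
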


%
%

\section{Existence of Bipyramids in $T\in \Skd$}

For $k\ge 2$, under what conditions will $T\in \Skd$
contain a bipyramid? Consider the following construction of possible
bipyramids from a $d$-simplex.
Let $S$ be a $d$-simplex, and let $\V(S) = \{ v_1,\dotsm, v_{d+1}\}$.
For any point $x$ exterior to $S$, let
$(\alpha_i)$ be the barycentric coordinates of $x$ relative to $S$.
By the definition of a bipyramid,
$P=\conv(S,x)$ is a bipyramid if and only if
exactly one $\alpha_i$ is negative, and the remainder are positive.
If we allow for, say, $j$ of the $\alpha_i$ to be zero, then 
$P$ will contain a  $(d-j)$-bipyramid.

The geometric interpretation is as follows. Let $H_i$ be the
hyperplane containing the facet of $S$ opposite $v_i$.
Then $P$ is a $d$-bipyramid if $x$ and $S$
are on opposite sides of exactly one $H_i$, and $x$ is on
the same side of the remaining hyperplanes as $S$.
If $x$ is  contained in $j$ of these hyperplanes,
then $P$ contains a $(d-j)$-bipyramid.

\begin{theorem} \label{musthavejbipyramid}
Suppose $T\in \Skd$, where $k\ge 2$.
Let $w_1$ and $w_2$ be any two interior lattice points of $T$,
and let $\T_{w_1} = \{ T_i \}$
be the basic triangulation of $T$ with respect to $w_1$.
If $w_2$ lies in the relative interior of a simplex in $\T_{w_1}$, say $T_{n}$, then
there exist numbers
$\alpha_i$  such that
\[ w_2 = \alpha_1 w_1 + \alpha_n v_n + 
\sum_{\substack{i=2\\i\not=n}}^{d+1}
\alpha_i v_i, 
\quad \quad \sum_{i=1}^{d+1} \alpha_i = 1, 
\]
$\alpha_1>0$, $\alpha_n < 0$, and $\alpha_i \ge 0$ otherwise.
\end{theorem}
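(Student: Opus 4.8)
The plan is to obtain the numbers $\alpha_i$ as the barycentric coordinates of $w_2$ relative to a suitably chosen simplex of $\T_{w_1}$. Since $T$ is a simplex, $\T_{w_1}=\{T_i\}$ with $T_i:=\conv(F_i,w_1)$, where $F_i$ is the facet of $T$ opposite $v_i$, so the vertex set of $T_i$ is $(\V(T)\setminus\{v_i\})\cup\{w_1\}$. The hypothesis gives $w_2\in\int(T_n)$ (the index $n$ being unique, as the $T_i$ are $d$-dimensional with pairwise disjoint interiors), hence there are $\beta_0,\beta_i>0$ (for $i\neq n$) with
\[ w_2=\beta_0w_1+\sum_{i\neq n}\beta_iv_i,\qquad \beta_0+\sum_{i\neq n}\beta_i=1; \]
likewise $w_1\in\int(T)$ yields $\gamma_i>0$ with $w_1=\sum_{i=1}^{d+1}\gamma_iv_i$ and $\sum_i\gamma_i=1$. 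Relabeling the vertices of $T$ if necessary (which permutes the $T_i$ correspondingly), I may assume in addition that $n\neq1$ and that $\beta_1/\gamma_1=\min\{\beta_i/\gamma_i:1\le i\le d+1,\ i\neq n\}$; this is possible because the minimizing index among those $\neq n$ can always be made equal to $1$.

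Next I would eliminate $v_1$, which is not a vertex of $T_1$ (but $v_n$ is, as $n\neq1$). Solving $w_1=\sum_i\gamma_iv_i$ for $v_1$ gives $v_1=\gamma_1^{-1}\bigl(w_1-\sum_{i=2}^{d+1}\gamma_iv_i\bigr)$, and substituting this into the expansion of $w_2$ (in which $v_1$ occurs with the positive coefficient $\beta_1$) yields
\[ w_2=\Bigl(\beta_0+\tfrac{\beta_1}{\gamma_1}\Bigr)w_1-\tfrac{\beta_1\gamma_n}{\gamma_1}\,v_n+\sum_{\substack{i=2\\i\neq n}}^{d+1}\Bigl(\beta_i-\tfrac{\beta_1\gamma_i}{\gamma_1}\Bigr)v_i. \]
Setting $\alpha_1=\beta_0+\beta_1/\gamma_1$, $\alpha_n=-\beta_1\gamma_n/\gamma_1$, and $\alpha_i=\beta_i-\beta_1\gamma_i/\gamma_1=\gamma_i\bigl(\beta_i/\gamma_i-\beta_1/\gamma_1\bigr)$ for the remaining $i$, positivity of the $\beta$'s and $\gamma$'s gives $\alpha_1>0$ and $\alpha_n<0$ at once, while $\alpha_i\ge0$ for $i\neq1,n$ is exactly the consequence of having chosen $1$ to minimize $\beta_i/\gamma_i$. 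Both expansions used are affine combinations with coefficient sum $1$, so substitution preserves this and $\sum_{i=1}^{d+1}\alpha_i=1$; alternatively one checks this directly using $1-\sum_{i\ge2}\gamma_i=\gamma_1$.

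The one genuinely delicate point is the choice of which vertex to eliminate. Expanding $w_2$ with respect to $T_j$ for any $j\neq n$ already produces a positive coefficient on $w_1$ and a negative coefficient on $v_n$ — geometrically, $w_1$ and $w_2$ lie on the same side of the facet $F_j$ of $T$, while $w_2$ and $v_n$ lie on opposite sides of the common facet $T_n\cap T_j$ — but the remaining coordinates need not be nonnegative for an arbitrary $j$. The minimality condition on $\beta_1/\gamma_1$ is precisely what forces the correct signs there; isolating this condition is the main obstacle, and the rest is a routine computation with barycentric coordinates.
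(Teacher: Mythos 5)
Your proof is correct, and it takes a genuinely different route from the paper's. The paper's argument splits into two cases according to whether $w_2$ lies on the line $L$ through $v_n$ and $w_1$; in the generic case it introduces the auxiliary point $x = L \cap F_n$, uses the cleanliness of $T$ to place $x$ in the relative interior of $F_n$, invokes Lawson's First and Second Theorems to triangulate $T_n$ from $x$, locates $w_2$ in one of the resulting subsimplices, and finally eliminates $x$ via $x = \mu w_1 + (1-\mu) v_n$ with $\mu > 1$. You replace all of this with two barycentric expansions and a single linear elimination: $w_2 = \beta_0 w_1 + \sum_{i \neq n} \beta_i v_i$ with all coefficients positive because $w_2 \in \int(T_n)$, and $w_1 = \sum_i \gamma_i v_i$ with all coefficients positive because $w_1 \in \int(T)$, followed by elimination of the vertex minimizing $\beta_i/\gamma_i$ over $i \neq n$. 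That minimality condition is exactly what the paper's ``locate $w_2$ in the correct subsimplex of the fan from $x$'' step accomplishes geometrically, but your version needs no case distinction (the collinear case is precisely the degenerate one in which all the ratios $\beta_i/\gamma_i$, $i \neq n$, coincide, so the remaining $\alpha_i$ vanish), no appeal to Lawson's theorems, and --- worth noting --- no use of the cleanliness of $T$: only $w_1 \in \int(T)$ and $w_2 \in \int(T_n)$ enter. Your computation of $\sum_i \alpha_i = 1$ and of the signs is accurate, and the relabeling is legitimate since the minimizing index automatically differs from $n$. The only thing the paper's proof establishes beyond the stated conclusion is that, when $w_2$ is not on $L$, at least one of the remaining $\alpha_i$ is strictly positive; this also drops out of your formula, since $\alpha_i = 0$ for all $i \neq 1, n$ forces all the ratios to be equal and hence $w_2$ to lie on the line through $w_1$ and $v_n$.
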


\begin{proof}
Let $\V(T) = \{v_1,\dotsm, v_{d+1} \}$ and $F_i$ be the facet of $T$
opposite the vertex $v_i$. We may assume without loss of generality
that  $T_i = \conv(F_i, w_1)$ and
$n=d+1$ (i.e. $w_2\in \int(T_{d+1})$).
Let $L$ be the line through
$v_{d+1}$ and $w_1$. 

{\bf Case 1:} If $w_2$ lies on $L$,
then $w_1$ is necessarily between $v_{d+1}$ and $w_2$.
There exists  $\alpha > 1$ such that
\[ w_2 = v_{d+1} + \alpha (w_1 - v_{d+1})
= \alpha w_1 + (1-\alpha) v_{d+1}. \]
Choose  $\alpha_1 = \alpha$, $\alpha_{d+1} = 1-\alpha$, and
$\alpha_i = 0$ for $2\le i \le d+1$.

{\bf Case 2:} Suppose
$w_2$ does not lie on $L$. Let $x = L \cap F_{d+1}$.
If $x\not\in \int(F_{d+1})$, 
then the line segment $v_{d+1} x$,
which contains $w_1$, would be contained
in a facet of $T$ and contradict the cleanliness of $T$.
Thus $x\in \int(F_{d+1})$ and consequently
$x$ cannot be a lattice point.
Since $x\in \int(F_{d+1})$ if and only if there exists $\beta_i$
such that 
\[ x = \sum_{i=1}^d \beta_i v_i,
\quad \beta_i > 0, \quad \text{and}\quad 
\quad \text{and}\quad \sum_{i=1}^d \beta_i = 1, \]
the set
$\P = \V(T_{d+1})\cup \{ x \} \not\subset \Z^d$
can be partitioned into
\[ \A_0 = \{ w_1 \}, \quad \A_1 = \{ x \},
\quad\text{and}\quad \A_2 = \V(F_{d+1}) \]
according to Lawson's First Theorem.  By Lawson's Second Theorem,
$\P$ has exactly one (non-lattice) triangulation
\[ \T = \{\ \conv(\V(T_{d+1}) \backslash \{ v_i \} \cup \{ x \})\ :\
 1\le i \le d\ \}.\]
Since
$T_{d+1} = \conv(\P)$ and $w_2 \in \int(T_{d+1})$, $w_2$ must
be in some (non-lattice) simplex in $\T$. Without loss of generality,
suppose 
\[
w_2 \in \conv(v_2,\dotsm, v_{d}, w_1, x). \]
Then 
there exist $\gamma_i$ such that $\gamma_i \ge 0$ for $1\le i \le d+1$,
\[ \sum_{i=1}^{d+1} \gamma_i = 1, \quad \text{and} \quad
w_2 = \gamma_1 w_1 + \sum_{i=2}^d \gamma_i v_i + \gamma_{d+1} x. \]
Since $w_2$ cannot lie in any face of $T$, $\gamma_1 >0$. Similarly,
$\gamma_{d+1}>0$ since $w_2$ is not in any face of $T_{d+1}$.
Furthermore, the assumption that $w_2$ does not lie on $L$
implies one of the remaining $\gamma_i$ ($2\le i \le d)$ must also
be positive. 
Since $w_1$ lies between $v_{d+1}$ and $x$ on $L$, there exists $\mu>1$
such that
\[ x = v_{d+1} + \mu (w_1 - v_{d+1}) 
= \mu w_1 + (1-\mu) v_{d+1}. \]
It follows that
\[ w_2 = (\gamma_1 + \mu \gamma_{d+1}) w_1 + \sum_{i=2}^d \gamma_i v_i 
+ (1-\mu)\gamma_{d+1}v_{d+1}. \]
Let 
\[ \alpha_1 = \gamma_1 + \mu \gamma_{d+1},
\quad \alpha_{d+1} = (1-\mu)\gamma_{d+1}, \quad\text{and}\quad
\alpha_i = \gamma_i \quad\text{for}\quad 2\le i \le d. \]
Note that
\[ \sum_{i=1}^{d+1} \alpha_i
= [\mu + (1-\mu)]\gamma_{d+1} + \sum_{i=1}^{d} \gamma_i = 1, \]
$\alpha_1 > 0$, and $\alpha_{d+1} < 0$. The remaining
$\alpha_i$ are nonnegative, and at
least one is positive since at least one
of the $\gamma_i$ is  positive for $2\le i \le d$.
\end{proof}

The geometric interpretation of Theorem \ref{musthavejbipyramid}
is that for some simplex $T_{m} \not= T_n$ in 
$\T_{w_1}$, $\conv(T_m, w_2)$ 
contains a $j$-bipyramid $P$,
and $\V(P)$ includes $w_1$ and $w_2$. 
In terms of triangulations, $\V(T) \cup \{ w_1 , w_2 \}$
has two  triangulations, provided $j>1$.

\begin{corollary} \label{thastwotriangulations}
Suppose $T\in \Skd$, where $k\ge 2$.
If $w_2$ lies in the relative interior of a simplex in 
the basic triangulation $\T_{w_1}$ of $T$,
then the set $\V(T)\cup \{ w_1 , w_2 \}$ has two 
triangulations provided $w_1$ and $w_2$
are not collinear with any $v\in \V(T)$.
\end{corollary}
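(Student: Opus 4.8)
The plan is to extract from Theorem~\ref{musthavejbipyramid} an explicit $j$-bipyramid $Q$ ($2\le j\le d$) spanned by $w_1$, $w_2$, and some vertices of $T$, to feed it to Corollary~\ref{hasbipyramid} through a $(d+2)$-vertex subpolytope, and to propagate the two resulting triangulations to all of $\V(T)\cup\{w_1,w_2\}$. Concretely, I would first apply Theorem~\ref{musthavejbipyramid} to $w_1$, $w_2$, and the basic triangulation $\T_{w_1}$. This produces the vertex $v_n$ with $w_2\in\int(T_n)$ and coefficients $\alpha_1>0$ (of $w_1$), $\alpha_n<0$ (of $v_n$), and the remaining $\alpha_i\ge 0$, with $w_2=\alpha_1w_1+\alpha_nv_n+\sum\alpha_iv_i$. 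The hypothesis that $w_1$ and $w_2$ are not collinear with any vertex excludes Case~1 of that proof, and in Case~2 the argument already yields some vertex $v_i\ne v_n$ with $\alpha_i>0$; moreover the representation there involves $w_1$ together with at most $d$ of the $d+1$ vertices of $T$, so at least one vertex has coefficient $0$. Writing $I=\{\,i:v_i\ne v_n,\ \alpha_i>0\,\}$ and $j=|I|+1$, we thus have $2\le j\le d$ and there is a vertex $v_m$ with $\alpha_m=0$ (so $m\ne n$ and $m\notin I$).

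Next I would verify that $Q:=\conv(\{w_1,w_2,v_n\}\cup\{v_i:i\in I\})$ is a $j$-bipyramid. Rewriting the relation as $\alpha_1w_1+\sum_{i\in I}\alpha_iv_i=w_2+|\alpha_n|v_n$ and dividing by $1+|\alpha_n|$ exhibits a common point of the segment $\conv(w_2,v_n)$ and the simplex $\conv(\{w_1\}\cup\{v_i:i\in I\})$ that lies in the relative interior of each; this is a Radon partition with parts $\{w_2,v_n\}$ and $\{w_1\}\cup\{v_i:i\in I\}$. The crucial point is that the $j+1$ points $\{w_1,v_n\}\cup\{v_i:i\in I\}$ are affinely independent: the vertices among them lie in $\V(T)$ and hence are independent, and $w_1\in\int(T)$ lies on no hyperplane spanned by at most $d$ vertices of $T$. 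From this one checks that $\dim Q=j$, that all $j+2$ of its points are genuine vertices, and that $Q$ is a $j$-bipyramid with apex segment $\conv(w_2,v_n)$. The same affine-relation bookkeeping, applied to the point set $(\V(T)\setminus\{v_m\})\cup\{w_1,w_2\}$ of $\conv(T_m,w_2)$, shows that all $d+2$ of its points are vertices, so $\conv(T_m,w_2)$ is a $d$-polytope with exactly $d+2$ vertices containing $Q$ as the convex hull of a subset of its vertices.

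Finally I would apply Corollary~\ref{hasbipyramid} to the $d$-polytope $\conv(T_m,w_2)$ and its sub-bipyramid $Q$: its vertex set has exactly two triangulations, $\mathcal S_1$ with two simplices and $\mathcal S_2$ with $j$ simplices, and tracing through Lawson's theorems shows $\conv(w_2,v_n)$ is an edge of $\mathcal S_2$ but not of $\mathcal S_1$. Since $v_m$ is a vertex of $T$ with $v_m\notin\conv(T_m,w_2)$ and $T=\conv(\conv(T_m,w_2),v_m)$, each $\mathcal S_i$ extends --- by coning $v_m$ over the boundary simplices of $\conv(T_m,w_2)$ visible from $v_m$ --- to a triangulation $\T^{(i)}$ of $(\V(T)\setminus\{v_m\})\cup\{w_1,w_2\}\cup\{v_m\}=\V(T)\cup\{w_1,w_2\}$. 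Because the simplices of $\T^{(i)}$ contained in $\conv(T_m,w_2)$ are exactly those of $\mathcal S_i$, we get $\T^{(1)}\ne\T^{(2)}$ (equivalently, $\conv(w_2,v_n)$ is an edge of $\T^{(2)}$ only), and these are the two triangulations asserted.

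I expect the main obstacle to be the nondegeneracy claims in the second paragraph: that $w_1$, which is an interior, non-vertex point of $T$, is nonetheless a vertex of both $Q$ and $\conv(T_m,w_2)$, that $\dim Q=j$, and that $\conv(T_m,w_2)$ genuinely has $d+2$ vertices so that Corollary~\ref{hasbipyramid} applies. All of these reduce to the single affine-independence lemma above, whose proof needs only that an interior lattice point avoids every hyperplane spanned by at most $d$ vertices of $T$; once it is in hand, the Radon and bipyramid structure and the edge criterion separating $\mathcal S_1$ from $\mathcal S_2$ are routine, and the extension is the familiar ``placing a point beyond its visible facets'' construction, which one should nonetheless check against conditions~(a)--(d) in the definition of a triangulation.
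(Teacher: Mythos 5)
Your proposal is correct and rests on exactly the same two pillars as the paper's proof: Theorem~\ref{musthavejbipyramid} produces the sign pattern of the barycentric coordinates (with Case~1 ruled out by the non-collinearity hypothesis, forcing $j\ge 2$), and Corollary~\ref{hasbipyramid} then gives the two local triangulations of the $(d+2)$-point configuration $(\V(T)\setminus\{v_m\})\cup\{w_1,w_2\}$, distinguishable by whether $\conv(w_2,v_n)$ is used as an edge. The one place you diverge is in promoting these two local triangulations to triangulations of all of $\V(T)\cup\{w_1,w_2\}$: the paper takes the refinement $\T_2=\T_{w_1}\setminus\{T_{d+1}\}\cup\T_{w_2}$ of the basic triangulation, observes that $T_m$ and $\conv(S_m,w_2)$ already sit inside it as the two-simplex triangulation of $\conv(T_m,w_2)$, and performs the flip in place; you instead rebuild the complement by placing $v_m$ over the visible boundary simplices of $\conv(T_m,w_2)$. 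Both work --- since the two local triangulations induce the same boundary triangulation, your cones are identical in $\T^{(1)}$ and $\T^{(2)}$, so the two extensions are genuinely distinct --- but the paper's route is slightly more economical because the first triangulation is already in hand from Theorem~\ref{refinementtheorem}, while yours has the virtue of making explicit the nondegeneracy checks (that $w_1$ is a true vertex of $Q$ and of $\conv(T_m,w_2)$, and that $\dim Q=j$) which the paper passes over in silence.
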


\begin{proof}
Let $\T_1 = \T_{w_1} = \{ T_i \}$.
Without loss of generality, suppose $w_2 \in \int(T_{d+1})$. 
Let $\T_{w_2}$ be the basic triangulation of $T_{d+1}$
with respect to $w_2$.
One  triangulation of $\V(T) \cup \{ w_1, w_2 \}$
is the refinement $\T_2$ of $\T_1$ guaranteed by 
Theorem \ref{refinementtheorem} (with $w=w_2$), where
\[ \T_2 = \T_{w_1} \backslash \{ T_{d+1} \} \cup \T_{w_2}. \]
Let $S_i$ be the facet of $T_{d+1}$ opposite $v_i$ for $1\le i \le d$,
and let $S_{d+1}$ be the facet of $T_{d+1}$ opposite $w_1$.
Theorem \ref{musthavejbipyramid} guarantees there exists
$1\le m \le d$ such that
\[\conv(T_m, w_2) =  \conv\left(
T_m \bigcup 
\conv(S_m, w_2) \right). \]
contains a $j$-bipyramid.
Note that both $T_m$ and $\conv(S_m, w_2)$ are simplices in 
$\T_{2}$.
 Since 
$w_1$ and $w_2$ are not collinear with any $v\in \V(T)$,
$T$ is clean, and no three vertices of $T$ are collinear,
it follows that
$j>1$. Corollary \ref{hasbipyramid} implies $\V(T_m)\cup \{ w_2 \}$
has two triangulations, one of which is contained
in $\T_2$. That is,
\[ \conv(T_m, w_2) = T_m\ \bigcup\ \conv(S_m, w_2). \]
Thus $\V(T) \cup \{ w_1, w_2 \}$
has two triangulations.
\end{proof}

Among all possible $d$-bipyramids that have
two triangulations, 2-bipyramids are unique in that their
triangulations have the same
cardinality. The existence of lattice 2-bipyramids within lattice
$d$-simplices has further implications.
Note that 2-bipyramids are simply convex planar 
quadrilaterals.

\begin{lemma} \label{noparallelogram}
Suppose $T\in S_k^d$ where $k\ge 2$. Let $w_1$ and $w_2$ be any two of
the interior points of $T$. Let $Q = \conv( w_1, w_2, v, v' )$ where
$v, v' \in \V(T)$.
If $Q$ is a planar quadrilateral, then 
the opposing edges of $Q$ cannot be parallel.
\end{lemma}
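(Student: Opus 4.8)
The plan is to argue by contradiction: suppose $Q = \conv(w_1, w_2, v, v')$ is a planar quadrilateral with a pair of parallel opposing edges. There are two cases according to how the four points are matched into opposing edges, since $w_1, w_2$ are interior lattice points and $v, v'$ are vertices of $T$. In one case the parallel edges are $w_1 w_2$ and $v v'$; in the other they are $w_1 v$ and $w_2 v'$ (or symmetrically $w_1 v'$ and $w_2 v$). In either situation, parallelism of two lattice segments means the difference vectors are parallel, and I would exploit the fact that the lattice points involved sit inside the clean simplex $T$ to derive an extra lattice point where none can exist.

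First I would set up coordinates via barycentric coordinates relative to $T$, as developed in the Preliminaries section: each of $w_1, w_2$ has all-positive barycentric coordinates (being interior), and $v, v'$ are two of the standard unit vectors in barycentric terms. The key step is to translate ``opposing edges parallel'' into a linear relation among $w_1, w_2, v, v'$. If $w_1 - w_2 = \lambda(v - v')$ for some scalar $\lambda$, then because $w_1$ and $w_2$ are both lattice points and $v - v'$ is a primitive lattice direction (no three vertices of $T$ are collinear, and more importantly the edge $vv'$ of the clean simplex $T$ contains no interior lattice points), $\lambda$ must be an integer; but a nonzero integer $\lambda$ with $|\lambda| \ge 1$ would push one of $w_1, w_2$ outside $\int(T)$ or onto its boundary — contradicting that both are interior, or yielding a boundary lattice point other than a vertex, contradicting cleanliness. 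The case $\lambda = 0$ is excluded since $w_1 \ne w_2$.

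In the other matching, where $w_1 v \parallel w_2 v'$ and $w_1 v' \parallel w_2 v$ would force a parallelogram, I would use the standard fact that a lattice parallelogram has its center at $\tfrac12$ of a lattice point: the midpoint of the diagonal $w_1 w_2$ equals the midpoint of the diagonal $vv'$, so $w_1 + w_2 = v + v'$. Then consider the lattice point $m = v + v' - w_1 = w_2$ — this is consistent — but more usefully, reflecting $v$ through the center gives $v' + w_1 - w_2$... the cleaner route is: a nondegenerate lattice parallelogram with vertices including two vertices of $T$ and two interior points forces, via Minkowski-type reasoning or by direct inspection of its own triangulation, the segment $vv'$ to be an edge or diagonal whose relation $w_1 + w_2 = v + v'$ combined with positivity of barycentric coordinates of $w_1, w_2$ yields that $v$ and $v'$ have barycentric coordinate $1$ in their own slot and $0$ elsewhere, so $w_1 + w_2$ has barycentric coordinates summing correctly only if the ``$0$'' coordinates of $w_1$ cancel against those of $w_2$ — impossible since all are strictly positive.

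The main obstacle I anticipate is the bookkeeping in the second case: ruling out the genuine parallelogram configuration cleanly, since ``opposing edges parallel'' for a quadrilateral can a priori mean just one pair is parallel (a trapezoid) rather than both. I would handle the trapezoid subcase by the primitive-lattice-vector argument of the previous paragraph (the parallel pair gives an integer multiple relation), and the parallelogram subcase by the midpoint/barycentric-positivity argument; the delicate point is making sure the two subcases are exhaustive and that in the trapezoid subcase the relevant difference vector really is forced to be a nonzero integer multiple of a short lattice vector. Once the parallel relation is pinned down as $w_1 - w_2 = \lambda(v - v')$ with $\lambda \in \Z \setminus \{0\}$ (or the analogous relation with $v, v'$ and $w_1, w_2$ swapped), the contradiction with interiority and cleanliness of $T$ is immediate.
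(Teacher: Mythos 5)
Your treatment of the pair $w_1w_2 \parallel vv'$ is sound and essentially the paper's argument: cleanliness of $T$ makes $v-v'$ primitive, so the parallel lattice vector $w_1-w_2$ is a nonzero \emph{integer} multiple $\lambda(v-v')$, while any segment parallel to the edge $vv'$ and contained in $\int(T)$ is strictly shorter than that edge, forcing $|\lambda|<1$, a contradiction. (The paper phrases the same estimate by exhibiting the extra boundary lattice point $v+(w_2-w_1)$ in the relative interior of the edge $vv'$.) Your midpoint argument for the genuine parallelogram --- $w_1+w_2=v+v'$, then compare the barycentric coordinate of a third vertex, which is $0$ on the right side and strictly positive on the left --- is also correct.

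The gap is the trapezoid subcase of the other matching, where only $w_1v \parallel w_2v'$. Here ``the primitive-lattice-vector argument of the previous paragraph'' does not transfer: $w_1-v$ need not be primitive (for $k\ge 2$ the segment from a vertex to an interior lattice point may contain further interior lattice points), and even after writing $w_2-v'$ as an integer multiple of the primitive vector underlying $w_1-v$ there is nothing to contradict --- additional lattice points on the open segments $vw_1$ and $v'w_2$ lie in $\int(T)$ and are perfectly admissible, and there is no analogue of the ``interior chord is shorter than the edge'' comparison between $|w_1-v|$ and $|w_2-v'|$. So the contradiction is not ``immediate'' as you claim. The correct observation, which is what the paper's two-hyperplane picture encodes, is that this configuration is impossible for purely geometric reasons with no lattice input: if $w_1-v=\mu(w_2-v')$, apply the barycentric coordinate functional of any third vertex $v''$ of $T$ (it vanishes at $v$ and $v'$ and is positive at $w_1$ and $w_2$) to force $\mu>0$, then apply the functional of $v'$ (equal to $0$ at $v$, to $1$ at $v'$, and lying in $(0,1)$ at $w_1,w_2$) to force $\mu<0$. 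You set up exactly the right barycentric machinery but applied it only to the parallelogram identity; applying it directly to the parallelism relation closes this case and subsumes your parallelogram subcase.
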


\begin{proof}
Without loss of generality, suppose $w_1 v$ and $w_2 v'$ are edges of $Q$.
Let $H_1$ be the hyperplane containing 
the facet of $T$ opposite $v'$,
and let $H_2$ be the hyperplane parallel to $H_1$ and
containing $v'$. Since $w_1$ is an interior point of $T$, it must lie
between $H_1$ and $H_2$. 
\begin{figure}[h]
\centering
\psfrag{H1}{\smaller $H_1$}
\psfrag{H2}{\smaller $H_2$}
\psfrag{w1}{\smaller $w_1$}
\psfrag{w2}{\smaller $w_2$}
\psfrag{v}{\smaller $v$}
\psfrag{v'}{\smaller $v'$}
\psfrag{w}{\smaller $w$}
\includegraphics[scale=1.0]{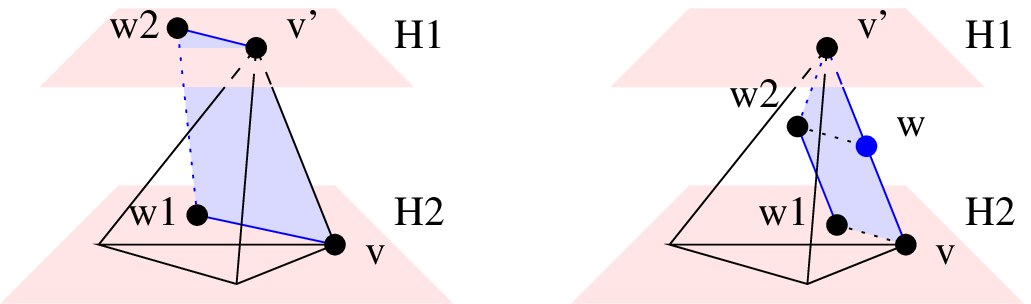}
\end{figure}
If $w_1 v \parallel w_2 v'$, then $w_2$ 
must be opposite of $w_1$ relative to both $H_1$ and $H_2$.
This is impossible since $w_2$ would lie outside of $T$ as
shown in the left figure above. If $w_1 w_2 \parallel v v'$, 
edge $w_1 w_2$ of $Q$ must be shorter than edge $v v'$ since
$w_1 w_2$ is in the interior of $T$.
Then  $w=v' - (w_2 - w_1)$, or $w=v + (w_2 - w_1)$,
is a lattice point on the edge $v v'$, as shown
above on the right, which is also impossible as $T$
is clean. 
\end{proof}

\begin{lemma} \label{qhaslatticepoint}
Suppose $Q = \conv( v_1,v_2, v_3, v_4 )$ is a planar lattice
quadrilateral.
If the opposing edges of $Q$ are not
parallel, then interior of $Q$ contains a lattice point
$w \not= v_i$ for $1\le i \le 4$. Moreover, $w$ lies in the interior of
a triangle whose edge set is a subset of the edges
and diagonals of $Q$.
\end{lemma}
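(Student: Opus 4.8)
The plan is to reduce the assertion to producing an interior lattice point of $Q$ distinct from the point $c$ at which the diagonals $v_1v_3$ and $v_2v_4$ meet. Note first that the four edges together with the two diagonals are all $\binom{4}{2}=6$ segments joining vertices of $Q$, so the triangles whose edge sets lie among them are exactly $\conv(v_1,v_2,v_3)$, $\conv(v_2,v_3,v_4)$, $\conv(v_3,v_4,v_1)$, $\conv(v_4,v_1,v_2)$. If $w\in\int(Q)$ and $w$ is not on the diagonal $v_1v_3$, then, since $Q=\conv(v_1,v_2,v_3)\cup\conv(v_3,v_4,v_1)$, the point $w$ lies in the interior of one of those two triangles; symmetrically for $v_2v_4$. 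As the diagonals of a convex quadrilateral meet only at $c$, any $w\in\int(Q)\cap\Z^d$ with $w\ne c$ lies in the interior of one of the four triangles and, being interior to $Q$, is none of the $v_i$. So it suffices to find $w\in\int(Q)\cap\Z^d$ with $w\ne c$.

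Assume no such $w$ exists; then $\int(Q)$ contains at most one lattice point, and any such point equals $c$. Suppose first $\int(Q)\cap\Z^d=\{c\}$. Since $c$ lies in the relative interior of $v_1v_3$, write $c=v_1+\lambda(v_3-v_1)$ with $0<\lambda<1$. If $\lambda<\tfrac{1}{2}$, then $2c-v_1=v_1+2\lambda(v_3-v_1)$ is a lattice point in the open segment between $v_1$ and $v_3$, hence in $\int(Q)$, and it differs from $c$ — a contradiction; if $\lambda>\tfrac{1}{2}$, use $2c-v_3$ in the same way. Hence $\lambda=\tfrac{1}{2}$, so $c$ is the midpoint of $v_1v_3$ and, by the same argument on the other diagonal, the midpoint of $v_2v_4$; but then the diagonals bisect each other, so $Q$ is a parallelogram and its opposite edges are parallel, contradicting the hypothesis.

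It remains to rule out that $Q$ is hollow, i.e. $\int(Q)\cap\Z^d=\emptyset$. Identifying $\aff(Q)\cap\Z^d$ with $\Z^2$, Pick's theorem gives $\Vol(Q)=\tfrac{1}{2}B-1$, where $B=g_1+g_2+g_3+g_4$ and $g_i\ge 1$ is the lattice length of the $i$th edge. Cutting $Q$ along $v_1v_3$ and along $v_2v_4$ into lattice triangles and using that two consecutive edge directions of a convex polygon are linearly independent (so each such triangle has area at least half the product of the two edge lengths it meets along $\partial Q$), we obtain $\Vol(Q)\ge\tfrac{1}{2}(g_1g_2+g_3g_4)$ and $\Vol(Q)\ge\tfrac{1}{2}(g_2g_3+g_4g_1)$, hence $\Vol(Q)\ge\tfrac{1}{4}(g_1+g_3)(g_2+g_4)$. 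Comparing with $\Vol(Q)=\tfrac{1}{2}B-1$ forces $(g_1+g_3-2)(g_2+g_4-2)\le 0$, and as each factor is $\ge 0$ we must have $g_1=g_3=1$ or $g_2=g_4=1$; say $g_1=g_3=1$. Then all the inequalities above are equalities, which forces $v_1v_3$ itself to be primitive and each of $\conv(v_1,v_2,v_3)$, $\conv(v_1,v_3,v_4)$ to have minimal area. After a unimodular change of coordinates making $v_1=\origin$, $v_2=e_1$, and — using a shear fixing $v_1,v_2$ — $v_3=e_1+g_2e_2$, these conditions pin $v_4$ down to $g_4(r,\,g_2r+1)$ for some $r\in\Z$; but convexity of $Q$ in the cyclic order $v_1v_2v_3v_4$ (the turns at $v_3$ and at $v_1$ give $g_4r\le 0$ and $g_2r\ge 0$) forces $r=0$, whence $v_4=g_4e_2$ and the edges $v_2v_3$ and $v_4v_1$ are both vertical, contradicting the hypothesis. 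The case $g_2=g_4=1$ is symmetric, so $Q$ is not hollow, completing the proof.

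The hard part is precisely the hollow case: the reduction and the reflection argument for $\int(Q)\cap\Z^d=\{c\}$ are short and stable, whereas eliminating a hollow $Q$ requires the Pick bookkeeping above together with the careful check that the equality case of the area estimate leaves only configurations with a pair of parallel edges. (Equivalently one can invoke that a hollow planar lattice polygon other than the doubled triangle $\conv(\origin,2e_1,2e_2)$ has lattice width $1$, so a quadrilateral among its vertices has two vertices on each of a pair of consecutive lattice lines and hence two parallel edges — but this conceals the same computation.) I expect most of the write-up length to be spent there.
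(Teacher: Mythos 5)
Your proof is correct, but it takes a genuinely different route from the paper's. The paper argues constructively: among the four corner triangles $\conv(v_{i-1},v_i,v_{i+1})$ it picks one of minimal area, say $\Delta=\conv(v_1,v_2,v_3)$, shows via a region analysis (using the non-parallelism hypothesis and the minimality of $\Delta$) that $v_4$ must lie strictly beyond both the line through $v_3$ parallel to $v_1v_2$ and the line through $v_1$ parallel to $v_2v_3$, and then exhibits the interior lattice point explicitly as the parallelogram completion $w=v_1+(v_3-v_2)\in\int(\conv(v_1,v_3,v_4))$. Your argument instead reduces to producing any interior lattice point other than the diagonal crossing $c$, dispatches the case $\int(Q)\cap\Z^2=\{c\}$ by a reflection/midpoint argument (forcing $Q$ to be a parallelogram), and rules out the hollow case by the Pick computation $\Vol(Q)=\tfrac12 B-1$ against the bound $\Vol(Q)\ge\tfrac14(g_1+g_3)(g_2+g_4)$, whose equality analysis pins $v_4$ down to a configuration with two parallel opposing edges; I checked the equality case and the normalization to $v_4=g_4e_2$, and it does close. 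What each approach buys: the paper's proof is shorter, explicit, and immediately identifies which triangle contains $w$ (namely $\conv(v_1,v_3,v_4)$ for the minimal corner); yours is longer and non-constructive in the hollow case, but it proves the stronger structural facts that a lattice quadrilateral with no parallel opposing edges cannot be hollow and cannot have its unique interior lattice point at the diagonal crossing, which is essentially the "hollow polygons have lattice width one" phenomenon you allude to. Either proof suffices for the applications in Theorem \ref{twocollinear} and Corollary \ref{allcollinear}.
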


\begin{proof}
Consider the pairs of adjacent edges of $Q$. 
Without loss of generality, suppose $v_1 v_2$ and
$v_2 v_3$ are edges of $Q$ such that 
the triangle $\Delta = \conv( v_1, v_2, v_3)$ has
minimal area. The lines containing each edge of $\Delta$ partition the plane
into 7 different regions
as shown below (left). 
\begin{figure}[h]
\begin{center}
\psfrag{v1}{\smaller $v_1$}
\psfrag{v2}{\smaller $v_2$}
\psfrag{v3}{\smaller $v_3$}
\psfrag{I}{\smaller $I$}
\psfrag{II}{\smaller $II$}
\psfrag{III}{\smaller $III$}
\psfrag{IV}{\smaller $IV$}
\psfrag{H1}{\smaller $H_1$}
\psfrag{H2}{\smaller $H_2$}
\psfrag{L1}{\smaller $L_1$}
\psfrag{L2}{\smaller $L_2$}
\psfrag{R}{\smaller $R$}
\includegraphics[scale=1.0]{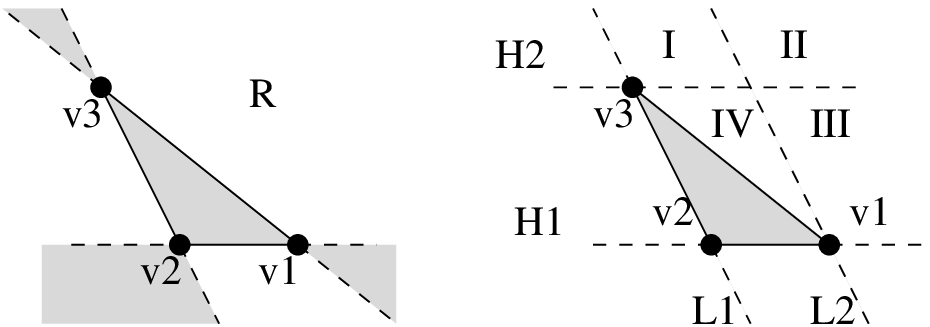}
\end{center}
\end{figure}
Since $Q$
is a convex quadrilateral, $v_4$ can only 
lie in the three unshaded regions.  Without loss of generality,
suppose $v_4$ lies in region $R$ as shown above.
Let $H_1$ be the line through $v_1 v_2$ and
$H_2$ be the line through $v_3$ such that $H_2 \parallel H_1$.
Similarly, let $L_1$ be the line through $v_2 v_3$ and 
$L_2$ be the line through $v_1$ such that $L2 \parallel L1$.
(See right-side figure above.) These four lines divide $R$
into four regions. Since $Q$ has no parallel edges, $v_4$ cannot
lie on any one of these four lines.
 If $v_4$ lies in region I,
then $\conv( v_2, v_3, v_4)$ is a triangle of smaller
area than $\Delta$ (they both share the same leg $v_2 v_3$,
whereas the distance between $v_4$ and $L_1$ is shorter than
the distance between $v_1$ and $L_1$).
Similarly, $v_4$ cannot lie in regions III or IV. 
Thus $v_4$ must lie in region II. Then
$w = v_1 + (v_3 - v_2)$
is a lattice point contained in interior of $\conv(v_1, v_3, v_4)$.
\end{proof}

%
%

\section{Collinearity Property of Minimal Volume
$T\in \Skd$}

We can now prove the collinearity property of $\int(T)\cap \Z^d$,
where $T\in \Skd$, $d\ge 3$, and $\Vol(T) = \frac{1}{d!}(dk+1)$.

\begin{theorem} \label{twocollinear}
Suppose $T\in \Skd$, where $d\ge 3$ and
$\Vol(T) = \frac{1}{d!}(dk+1)$.
Any two interior lattice points of $T$
are collinear with a vertex of $T$.
\end{theorem}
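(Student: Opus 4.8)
The plan is to argue by contradiction: suppose $w_1$ and $w_2$ are interior lattice points of $T$ that are \emph{not} collinear with any vertex of $T$, and derive that $\Vol(T)>\tfrac{1}{d!}(dk+1)$. The first step is to invoke Corollary~\ref{nojsimplex} together with the remark following it: if, in some refinement sequence (\ref{refinementsequence}) built by adding the $w_i$ one at a time, any $w_{i+1}$ lands in the relative interior of a $j$-face with $1<j<d$, we already get strict inequality. So we may assume that at every stage each newly inserted interior point lies either in the relative interior of an \emph{edge} of the current triangulation or in the relative interior of a full-dimensional \emph{simplex}. Order the points so that $w_1$ comes first and $w_2$ second. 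If $w_2$ lies in the relative interior of an edge of $\T_{w_1}$, then since $\T_{w_1}=\{\conv(F_i,w_1)\}$, that edge must have $w_1$ as an endpoint (an edge of $T$ itself would violate cleanliness), so $w_1,w_2$ lie on a segment from $w_1$ toward a vertex $v_i$ — hence collinear with $v_i$, contradicting our assumption. Therefore $w_2$ must lie in the relative interior of a full-dimensional simplex $T_n\in\T_{w_1}$.

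Now apply Corollary~\ref{thastwotriangulations}: since $w_1,w_2$ are not collinear with any vertex of $T$, the point set $\V(T)\cup\{w_1,w_2\}$ admits two distinct triangulations, and more precisely the analysis in Theorem~\ref{musthavejbipyramid} and Corollary~\ref{thastwotriangulations} produces a simplex $T_m\in\T_{w_1}$ ($m\neq n$) such that $\conv(T_m,w_2)$ contains a genuine $j$-bipyramid with $j>1$ whose vertex set includes both $w_1$ and $w_2$. The next step is to promote this to a \emph{lattice} bipyramid and then reduce its dimension to $2$: by choosing the triangulations $\T_{F_i}$ of the facets of $T$ appropriately (we have freedom in Theorem~\ref{nonewvert}), or by directly examining the barycentric-coordinate description in Theorem~\ref{musthavejbipyramid}, I want to locate two vertices $v,v'\in\V(T)$ so that $Q=\conv(w_1,w_2,v,v')$ is a planar lattice quadrilateral (a $2$-bipyramid). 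Concretely, among the zero/negative/positive pattern of the $\alpha_i$ in Theorem~\ref{musthavejbipyramid}, the $d-j$ indices with $\alpha_i=0$ can be used to cut down to a $2$-dimensional face containing $w_1,w_2$ and two of the $v_i$; cleanliness of $T$ and the non-collinearity hypothesis guarantee this face is a nondegenerate planar quadrilateral.

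With $Q$ in hand, Lemma~\ref{noparallelogram} says no pair of opposite edges of $Q$ is parallel, and then Lemma~\ref{qhaslatticepoint} produces a lattice point $w\in\int(Q)$ distinct from the four corners, lying in the interior of a triangle $\Delta$ whose edges are among the edges and diagonals of $Q$. This $w$ is a third interior lattice point of $T$ (it is interior to $Q\subseteq T$ and not a vertex of $T$), so $k\ge 3$. The final step is the counting argument: build a refinement sequence for $T$ that inserts $w_1$, then $w_2$, then $w$, in that order. Inserting $w$ forces a contribution to the triangulation that is strictly larger than $d$, because $w$ sits in the interior of the triangle $\Delta$ all of whose vertices are already present — equivalently $w$ lies in a $2$-face of the current triangulation, so Corollary~\ref{nojsimplex} (the case $j=2$, using $2<d$ since $d\ge 3$) gives $|\T_{i+1}|-|\T_i|>d$ at that step while every other step contributes at least $d$; summing yields $|\T_k|>dk+1$, hence $\Vol(T)>\tfrac{1}{d!}(dk+1)$ by (\ref{volbytriangulation}), a contradiction.

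The main obstacle is the middle step: extracting an honest planar lattice $2$-bipyramid $Q$ with corners among $\{w_1,w_2\}\cup\V(T)$ from the $j$-bipyramid supplied by Theorem~\ref{musthavejbipyramid}. One must be careful that the $2$-dimensional face cut out really has $w_1$ and $w_2$ as opposite or adjacent corners (not collapsing) and that its four corners are affinely independent in pairs so that Lemmas~\ref{noparallelogram} and~\ref{qhaslatticepoint} apply; this is where the hypotheses that $T$ is clean, no three vertices of $T$ are collinear, and $w_1,w_2$ are not collinear with a vertex all get used simultaneously. Once $Q$ is produced, the remainder is a clean application of the two quadrilateral lemmas plus the refinement-counting machinery already established.
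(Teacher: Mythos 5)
Your overall architecture matches the paper's: assume $w_1,w_2$ are not collinear with any vertex, dispose of the case where $w_2$ sits in a $j$-face with $1<j<d$ via Corollary~\ref{nojsimplex}, note that the edge case forces collinearity with some $v_i$, invoke Theorem~\ref{musthavejbipyramid} to get a $j$-bipyramid containing $w_1,w_2$, and in the planar case ($j=2$) use Lemmas~\ref{noparallelogram} and~\ref{qhaslatticepoint} to manufacture a third lattice point $w_3$ inside a $2$-face and conclude by the refinement count. That part is sound and is exactly what the paper does.

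The genuine gap is your middle step for $j>2$. You propose to ``cut down'' the $j$-bipyramid to a planar lattice quadrilateral $Q=\conv(w_1,w_2,v,v')$, but this reduction does not exist in general. The $d-j$ indices with $\alpha_i=0$ are already spent: they are precisely what lowers the bipyramid's dimension from $d$ to $j$, and they give you nothing further. The resulting $j$-bipyramid has apex segment $w_2v_n$ and equatorial $(j-1)$-simplex $\conv(w_1,v_{i_1},\dotsc,v_{i_{j-1}})$; all of its proper faces are simplices, so no four of its vertices containing both $w_1$ and $w_2$ need be coplanar, and generically they are not. Your fallback of ``choosing the facet triangulations appropriately'' is vacuous here, since the facets of a simplex are themselves simplices and admit only one triangulation. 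The paper closes the $j>2$ case by a completely different mechanism: Corollary~\ref{hasbipyramid} (Lawson's Second Theorem) gives a \emph{second} triangulation $\T_2'$ of $\V(T)\cup\{w_1,w_2\}$ of cardinality $|\T_1|+d-2+j=2d-1+j>2d+1$, and refining $\T_2'$ with the remaining $k-2$ interior points (each contributing at least $d$) yields a full triangulation with more than $dk+1$ simplices, hence $\Vol(T)>\tfrac{1}{d!}(dk+1)$ by (\ref{volbytriangulation}). You need this (or some substitute) for $j>2$; as written, your argument only covers $j=2$.
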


\begin{proof}
The claim is vacuously true for $k\le 1$, so suppose $k\ge 2$.
If $k=2$ then let $w_1$ and $w_2$ be the two interior lattice points of $T$.
Otherwise let $w_1$ and $w_2$ be any two interior lattice points
of $T$.
Suppose $w_1$ and $w_2$ are not collinear with any
$v\in \V(T)$. That is, suppose there does not
exist a lattice 1-bipyramid consisting of $w_1$, $w_2$, and
any $v\in \V(T)$.
Let $\T_1 = \T_{w_1} = \{ T_i \}$, where 
$\T_{w_1}$ is the basic triangulation of $T$ with respect to $w_1$.
By assumption, $w_2$ must lie
in the relative interior of either a simplex in $\T_1$,
or a $j$-face  of a simplex in $\T_1$, where $1<j<d$.

{\bf Case 1:} If $w_2$ lies in the relative
interior of a $j$-face of a simplex
in $\T_1$, where $1<j<d$, then Corollary \ref{nojsimplex}
implies $\Vol(T) > \frac{1}{d!}(dk+1)$, a contradiction.

{\bf Case 2:} If $w_2$ lies in the relative interior of 
a simplex in $\T_1$, then we may assume without loss of
generality that $w\in \int(T_{d+1})$. Let $\T_{w_2}$
be the basic triangulation of $T_{d+1}$ with respect to $w_2$.  
The refinement $\T_2$ of $\T_1$ obtained by applying Theorem
\ref{refinementtheorem} with $w=w_2$ is
\[ \T_2 = \T_{w_1} \backslash \{ T_{d+1} \} \cup \T_{w_2}. \]
Theorem \ref{musthavejbipyramid}
implies $T$ contains a lattice $j$-bipyramid $P$,
where $2 \le j \le d$ and $\{ w_1, w_2 \} \subset \V(P)$.
If $j=2$, then 
$P = \conv(v, v', w_1, w_2)$ is convex planar lattice
quadrilateral,
where $\{ v, v' \} \subset \V(T)$. Lemmas 
\ref{noparallelogram} and \ref{qhaslatticepoint} imply
$P$ contains a lattice point 
$w_3 \not\in \{ w_1, w_2 \}$.
This is a contradiction if $k=2$. If $k>2$,
then the second part of Lemma \ref{qhaslatticepoint}
implies $w_3$ lies in the relative interior of
some triangle $\Delta_i$, where
\begin{align*}
\Delta_1 & = \conv(w_1, w_2, v), &
\Delta_2 & = \conv(w_1, w_2, v'),\\
\Delta_3 & = \conv(w_1, v, v'), &
\Delta_4 & = \conv(w_2, v, v').
\end{align*}
Since $d\ge 3$, and  each $\Delta_i$ 
a 2-face of simplices in $\T_2$,
$\Vol(T) > \frac{1}{d!}(dk+1)$ by
Corollary \ref{nojsimplex}. This contradicts the mimal volume
property of $T$.
On the other hand, if $j>2$, then $\V(T) \cup \{ w_1, w_2 \}$
has two possible triangulations $\T_2$ and $\T_2'$
where  
\[ |\T_2'| = |\T_1| + d-2+j > 2d+1. \]
Subsequent applications of Theorem \ref{refinementtheorem}, starting with
$\T_2'$, and (\ref{volbytriangulation})  imply
\[ \Vol(T)>\frac{1}{d!}(dk+1). \]
This is again a contradiction. Thus any two points in $\int(T)\cap \Z^d$ must
be collinear with some $v\in \V(T)$.
\end{proof}

\begin{corollary} \label{allcollinear}
Suppose $T\in \Skd$, where $d\ge 3$
and $\Vol(T) = \frac{1}{d!}(dk+1)$. The
points $\int(T)\cap \Z^d$ are collinear with some
$v\in \V(T)$.
\end{corollary}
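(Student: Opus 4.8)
Corollary \ref{allcollinear} upgrades the pairwise statement of Theorem \ref{twocollinear} to a simultaneous one: I must produce a \emph{single} vertex $v\in\V(T)$ and a single line $\ell$ through $v$ containing \emph{all} of $\int(T)\cap\Z^d$. I would argue by induction on $k$ (the case $k\le 2$ being exactly Theorem \ref{twocollinear}), or equivalently by a direct extremal argument. Fix distinct interior lattice points $w_1,w_2,w_3$. By Theorem \ref{twocollinear} each pair is collinear with some vertex; write $\ell_{12}$ for the line through $w_1,w_2$, which passes through a vertex $v_{12}$, and similarly $\ell_{13}\ni v_{13}$ and $\ell_{23}\ni v_{23}$. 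The goal is to show these lines coincide.

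\textbf{Key steps.} First, observe that $w_1$ lies on both $\ell_{12}$ and $\ell_{13}$; if these two lines were distinct they would meet only at $w_1$, so the configuration $\{v_{12},w_1,w_2\}$ and $\{v_{13},w_1,w_3\}$ would span a $2$-flat $\pi$ through $w_1$ containing the two vertices $v_{12},v_{13}$ and the three interior points. The second step is to derive a contradiction from this $2$-dimensional picture together with the minimal-volume hypothesis. Here I would feed the relevant sub-configuration into the machinery of Section~5: the quadrilateral/triangle analysis (Lemmas \ref{noparallelogram} and \ref{qhaslatticepoint}) and Corollary \ref{nojsimplex}. Concretely, $\conv(w_1,w_2,w_3)$ (or $\conv(v_{12},v_{13},w_1)$) is a planar lattice set; running the refinement sequence of Corollary \ref{simplextriangulation} through an ordering that reaches these points, one of them lands in the relative interior of a $2$-face of a simplex in the current triangulation — precisely the situation Corollary \ref{nojsimplex} forbids when $\Vol(T)=\frac1{d!}(dk+1)$, since $d\ge 3$ makes $2$ a "bad" value of $j$ (neither $1$ nor $d$). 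This rules out $\ell_{12}\ne\ell_{13}$, so all interior points of $T$ lie on one common line $\ell$. The final step is identifying the vertex: $\ell$ meets $\partial T$ in two points; since $T$ is clean these exit points are either vertices or non-lattice, and collinearity of $w_1,w_2$ with a vertex from Theorem \ref{twocollinear} pins at least one endpoint of $\ell\cap T$ to a vertex $v$, giving $\int(T)\cap\Z^d\subset\ell$ with $v\in\ell$.

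\textbf{Main obstacle.} The delicate point is Step~2: correctly arranging the refinement order so that the planarity of the three interior points (together with a vertex) actually \emph{forces} some $w_i$ to sit in the relative interior of a $2$-face rather than an edge — one must be sure the "bad" $j=2$ event is unavoidable and not circumventable by a clever triangulation, and one must handle the possibility that the spanning $2$-flat $\pi$ contains only one vertex rather than two (in which case one works with $\conv(v,w_1,w_2,w_3)$ as a planar lattice polygon and invokes Lemma \ref{qhaslatticepoint} to manufacture an extra interior lattice point inside a $2$-face, again contradicting minimality via Corollary \ref{nojsimplex}). Once that planar obstruction is nailed down, everything else is bookkeeping with the results already established.
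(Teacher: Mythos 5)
Your proposal follows essentially the same route as the paper: apply Theorem \ref{twocollinear} to the pairs $\{w_1,w_2\}$ and $\{w_1,w_3\}$, observe that two distinct such lines through $w_1$ would force a planar configuration of lattice points, and then contradict minimality via Lemmas \ref{noparallelogram} and \ref{qhaslatticepoint} together with Corollary \ref{nojsimplex}, since $j=2$ is a ``bad'' face dimension when $d\ge 3$. The obstacle you flag is resolved in the paper not by choosing a clever refinement ordering but by a betweenness case split on the two lines: either $w_2$ and $w_3$ both lie between $w_1$ and their respective vertices, so $\conv(v,v',w_2,w_3)$ is a convex lattice quadrilateral inside the triangle $\conv(w_1,v,v')$ and Lemma \ref{qhaslatticepoint} manufactures a lattice point in the relative interior of a lattice $2$-face, or $w_1$ separates, in which case $w_2$ already lies in $\int(\conv(w_3,v,v'))$; either way Corollary \ref{nojsimplex} (which holds for an arbitrary enumeration of the interior points) applies directly.
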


\begin{proof}
The claim is vacuously true for $k \le 1$. Theorem
\ref{twocollinear} implies the claim also holds for $k=2$.
Suppose $k\ge 3$.
It suffices to show the claim holds for any three points in $\int(T)\cap \Z^d$.
Let $w_1$, $w_2$, and $w_3$ be any three points in $\int(T)\cap \Z^d$.
Theorem \ref{twocollinear} implies there exists 
$v\in \V(T)$ such that $w_1$, $w_2$, and $v$ are collinear
Similarly, there exists  $v'\in \V(T)$ such that
$w_1$, $w_3$, and $v'$ are collinear. We need to show $v=v'$.
Suppose $v\not=v'$. Let $L$ be the line through $w_1$, $w_2$ and $v$,
and let $L'$ be the line through $w_1$, $w_3$, and $v'$.
Without loss of generality, suppose $w_2$ is between $w_1$ and
$v$ on $L$.

{\bf Case 1:} If $w_3$ lies between $w_1$ and $v'$ on $L'$, then
$P = \conv(v, v', w_2, w_3)$ is a convex planar quadrilateral
contained within $\conv(w_1, v, v')$. Lemmas
\ref{noparallelogram} and \ref{qhaslatticepoint}
imply $\int(P)$ contains a lattice point $w_4 \not\in\{ w_1, w_2, w_3 \}$.
Since $P \subset \conv(w_1, v, v')$ and $d\ge 3$, Corollary
\ref{nojsimplex} implies $\Vol(T)>\frac{1}{d!}(dk+1)$.

{\bf Case 2:} If $w_1$ lies between $w_3$ and $v'$ on $L'$, then
$w_2 \in \int(\conv(w_3, v, v'))$.
 Again, since $d\ge 3$, Corollary 
\ref{nojsimplex} implies $\Vol(T)> \frac{1}{d!}(dk+1)$.
Thus $v = v'$, and all points in $\int(T)$ and $v$ are collinear.
\end{proof}

\begin{corollary} \label{evenlyspaced}
Suppose $T\in \Skd$ and $d\ge 3$.
Let $L$ be the line through $\int(T)\cap \Z^d$.
The consecutive points on $L$ are evenly spaced.
\end{corollary}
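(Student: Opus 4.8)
The plan is to reduce the statement to the elementary fact that the lattice points of $\Z^d$ on a fixed line form an arithmetic progression, and that the interior lattice points of $T$ on that line must occupy a consecutive block of that progression; no triangulation machinery is needed here.

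First I would dispose of the degenerate cases: if $\int(T)\cap\Z^d$ is empty or a single point there is nothing to prove, so I may assume there are at least two interior lattice points, all lying on the line $L$. (Collinearity is exactly what makes ``the line $L$'' meaningful; in the situations where this corollary is applied, it is supplied by Corollary~\ref{allcollinear}.) Let $u$ be the primitive lattice vector spanning the direction of $L$ and fix a lattice point $p\in L$, so that $L\cap\Z^d=\{\,p+ju:j\in\Z\,\}$.

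The key observation is that $\int(T)$ is open and convex, so $\int(T)\cap L$ is a relatively open segment of $L$; in particular, if $w,w'\in\int(T)\cap\Z^d$, then every point of the segment joining $w$ and $w'$ lies in $\int(T)$. Now list the points of $\int(T)\cap\Z^d$ in order along $L$ as $w_1,\dots,w_k$, writing $w_i=p+a_iu$ with $a_1<\dots<a_k$. If $a_{i+1}\ge a_i+2$ for some $i$, then $p+(a_i+1)u$ is a lattice point strictly between $w_i$ and $w_{i+1}$, hence it lies in $\int(T)$ by the previous sentence, hence it equals some $w_j$ with $a_i<a_j<a_{i+1}$, contradicting that $w_i$ and $w_{i+1}$ are consecutive. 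Therefore $a_{i+1}=a_i+1$ for every $i$, i.e. $w_{i+1}-w_i=u$ for all $i$, which is precisely the assertion that consecutive points on $L$ are evenly spaced.

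There is essentially no real obstacle; the only points requiring (routine) care are the degenerate cases and making ``$\int(T)\cap L$ is an open segment'' precise via the standard fact that for a convex body $C$, if $x\in\int(C)$ and $y\in C$ then the half-open segment $(y,x]$ lies in $\int(C)$. I would also remark that the hypothesis $d\ge 3$ is not used directly in this argument: it is inherited from the surrounding results (Theorem~\ref{twocollinear} and Corollary~\ref{allcollinear}) that guarantee the collinearity of $\int(T)\cap\Z^d$ in the first place.
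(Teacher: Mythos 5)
Your proof is correct, and it takes a genuinely (if mildly) different route from the paper's. The paper anchors everything at the vertex $v = L\cap \V(T)$ supplied by Corollary~\ref{allcollinear}: it writes each lattice point of $L\cap T$ as $v+\alpha(w-v)$ with $w$ the interior lattice point nearest to $v$, rules out $\{\alpha\}\neq 0$ by exhibiting the lattice point $v+\{\alpha\}(w-v)$ strictly between $v$ and $w$, and then pins $\alpha$ to $\{0,1,\dots,k\}$ by counting interior points. You instead ignore the vertex entirely, use the primitive direction vector $u$ of $L$, and show via convexity of $\int(T)$ that the interior lattice points occupy a gap-free block of the progression $p+\Z u$. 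Your version is cleaner and more self-contained (it never needs $L$ to meet a vertex), but note what the paper's formulation buys: its proof establishes the slightly stronger fact that the vertex $v$ itself extends the arithmetic progression with the same common difference, i.e.\ $w_i = v + i(w_1-v)$ for $0\le i\le k$, and it is exactly this form that is invoked later in Corollary~\ref{uniquetriangulation} and Theorem~\ref{hasgeneralform} (the set $\mathscr{W}$ there starts at $w_0=v_{d+1}$). Your argument recovers this with one extra sentence --- the same convexity fact applied to the half-open segment $(v,w_1]$ shows there is no lattice point strictly between $v$ and the nearest interior lattice point, so $w_1-v=\pm u$ as well --- and it would be worth adding that line if your proof is to substitute for the paper's.
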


\begin{proof}
Let $v = L \cap \V(T)$ and let $w \in \int(T)$ be the lattice point
closest to $v$.
Since any line can be described as a linear combination of two points,
any $w'\in L$ can be expressed as
\begin{equation} \label{equationofline}
w' = v + \alpha (w-v),
\end{equation}
where $\alpha \in \R$. 
It suffices to show
that for any $w'\in L\cap T \cap \Z^d$, the
$\alpha$ in (\ref{equationofline}) is an integer and
$0 \le \alpha \le k$.
If $\alpha < 0$, then $v\in \int(\conv(w,w')) \subset \int(T)$,
which is impossible.
If $\alpha = 0$ then $w'=v$.
Let $[a]$ and $\{ \alpha \}$ denote the 
integer and fractional parts 
of $\alpha$, respectively. If $\{\alpha \} \not=0$, then
\[ x = w' - [\alpha] (w-v) = v+\{ \alpha \}(w-v) \in \Z^d \]
lies between $v$ and $w$ on $L$, which
contradicts our assumption that $w$ is closest to $v$ on $L$.
Hence $\alpha \in \Z$.
Finally, if $\alpha > k$, then $T$ has more than $k$ interior
points, which is aslo impossible. 
\end{proof}

\begin{corollary} \label{uniquetriangulation}
If $T\in \Skd$, $\Vol(T) = \frac{1}{d!}(dk+1)$, 
and $d \ge 3$, then
$T\cap \Z^d$ has a unique triangulation.
\end{corollary}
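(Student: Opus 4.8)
The plan is to combine Corollaries~\ref{allcollinear} and \ref{evenlyspaced} to describe $T\cap\Z^d$ exactly, and then to show the triangulation is rigid by peeling simplices off a distinguished facet. By Corollary~\ref{allcollinear} we may assume, after a unimodular transformation and relabeling, that the interior lattice points of $T$ lie on a line $L$ through $v_{d+1}$, and Corollary~\ref{evenlyspaced} gives that they are $p_m:=v_{d+1}+m(p_1-v_{d+1})$ for $1\le m\le k$; writing $p_0:=v_{d+1}$, we then have
\[
T\cap\Z^d=\{v_1,\dots,v_d\}\cup\{p_0,p_1,\dots,p_k\}.
\]
Let $F=\conv(v_1,\dots,v_d)$ be the facet of $T$ opposite $v_{d+1}$. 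As in the proof of Theorem~\ref{musthavejbipyramid}, the fact that $T$ is clean forces $L\cap F$ to lie in the relative interior of $F$ (and to be non-lattice). Put $Q_m:=\conv(F,p_m)$ and $S_{i,m}:=\conv\bigl(\{v_1,\dots,v_d\}\setminus\{v_i\}\cup\{p_{m-1},p_m\}\bigr)$. One checks the elementary facts that $T=Q_0\supsetneq Q_1\supsetneq\cdots\supsetneq Q_k$, that the lattice points of $Q_m$ are exactly $\{v_1,\dots,v_d,p_m,p_{m+1},\dots,p_k\}$, and that the basic triangulation of $Q_{m-1}$ with respect to its interior point $p_m$ consists of $Q_m$ together with the $d$ simplices $S_{i,m}$. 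Iterating, $\T^\star:=\{Q_k\}\cup\{S_{i,m}:1\le i\le d,\ 1\le m\le k\}$ is a full lattice triangulation of $T$ with $|\T^\star|=dk+1$; the claim is that it is the only one.

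So let $\T$ be an arbitrary triangulation of $T\cap\Z^d$. Since each member of $\T$ is a lattice $d$-simplex of volume at least $1/d!$ and their volumes sum to $\Vol(T)=\frac{1}{d!}(dk+1)$, we have $|\T|\le dk+1$; hence it suffices to prove $\T^\star\subseteq\T$. First, $F$ is a facet of $T$ whose only lattice points are its vertices, so the restriction of $\T$ to $F$ is $\{F\}$ and $F$ is a facet of a unique simplex $T^\ast\in\T$. Writing $T^\ast=\conv(F,p_m)=Q_m$, condition~(d) of a triangulation forces $m=k$ (otherwise $p_{m+1}\in\int Q_m$ would be a lattice point of $T^\ast$ that is not a vertex), so $Q_k\in\T$.

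The remaining simplices are forced by peeling along $L$. For fixed $i$, let $\sigma_m^{(i)}:=\conv\bigl(\{v_j:j\ne i\}\cup\{p_m\}\bigr)$, which is the facet of $Q_m$ opposite $v_i$ and, for $m\ge 1$, also the facet of $S_{i,m}$ opposite $p_m$. A short computation---cleanest in the barycentric coordinates of the $p_m$ relative to $T$---shows that $\aff(\sigma_m^{(i)})$ separates $\{v_i,p_{m+1},\dots,p_k\}$ from $\{p_0,\dots,p_{m-1}\}$, while each $v_j$ ($j\ne i$) and $p_m$ lie on it. Now argue downward in $m$. Starting with $Q_k\in\T$: for each $i$, the facet $\sigma_k^{(i)}$ of $Q_k$ is not contained in $\partial T$ (it contains $p_k$), so it is a common facet of $Q_k$ and exactly one further simplex $\conv(\sigma_k^{(i)},y)$ with $y\in\{p_0,\dots,p_{k-1}\}$; since $\conv(\sigma_k^{(i)},p_j)\ni p_{j+1},\dots,p_{k-1}$, condition~(d) forces $y=p_{k-1}$, i.e.\ $S_{i,k}\in\T$. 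Inductively, if $S_{i,m}\in\T$ with $m\ge 2$, then $\sigma_{m-1}^{(i)}$ is a facet of $S_{i,m}$ not contained in $\partial T$, and the same reasoning forces its other neighbour to be $S_{i,m-1}$; when $m=1$ the relevant facet is $\sigma_0^{(i)}=\conv(\{v_j:j\ne i\}\cup\{v_{d+1}\})$, a facet of $T$, so the chain stops. Thus $S_{i,m}\in\T$ for all $i$ and all $1\le m\le k$, whence $\T\supseteq\T^\star$ and therefore $\T=\T^\star$.

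The one place with real content is the separation statement for $\aff(\sigma_m^{(i)})$; everything else is a traversal of the combinatorics and the triangulation axioms. Concretely, in barycentric coordinates $(\mu_1,\dots,\mu_{d+1})$ relative to $T$ one has $L\cap F=\sum_j\beta_j v_j$ with all $\beta_j>0$, $p_m$ has coordinates proportional to $(m\beta_1,\dots,m\beta_d,\,\kappa-m)$ for the appropriate $\kappa>k$, and $\aff(\sigma_m^{(i)})=\{\,\mu_i(\kappa-m)=m\beta_i\mu_{d+1}\,\}$; evaluating $\mu_i(\kappa-m)-m\beta_i\mu_{d+1}$ at $v_i$ and at each $p_{m'}$ yields the claimed signs at once. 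I expect this little computation, together with being careful that the peeled facets are genuinely interior (so the triangulation axioms pin down their second neighbour), to be the main obstacle.
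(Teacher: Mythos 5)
Your proof is correct, and for the uniqueness step it takes a genuinely different route from the paper, even though both arguments build the same explicit triangulation $\T^\star=\{S_{i,j}\}\cup\{\conv(F,p_k)\}$ from Corollaries \ref{allcollinear} and \ref{evenlyspaced}. The paper argues by \emph{classification}: any simplex $S$ in a full triangulation must be empty and clean by condition (d), so $\V(S)$ meets the collinear set $\mathscr{W}=\{p_0,\dots,p_k\}$ in exactly one or two points, and a short case analysis shows the only admissible vertex sets are those of the $S_{i,j}$; hence any full triangulation is contained in $\T^\star$ and therefore equals it. You argue in the opposite direction, by \emph{forcing}: you peel simplices off the facet $F$, using condition (c) to identify the unique neighbour across each interior facet $\sigma_m^{(i)}$, the separation computation in barycentric coordinates to confine the new apex to $\{p_0,\dots,p_{m-2}\}$, and condition (d) to pin it to $p_{m-2}$; this gives $\T^\star\subseteq\T$, and the volume count $|\T|\le dk+1=|\T^\star|$ (or simply that $\T^\star$ already covers $T$) finishes. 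Your separation lemma and its verification are sound, and the peeling induction is watertight, but it is noticeably more work than the paper's two-line observation that three collinear points cannot all be vertices of a simplex; what your approach buys is a self-contained adjacency argument that does not need to enumerate all empty clean simplices on $T\cap\Z^d$ in advance, and which makes visible that the triangulation is rigid facet by facet.
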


\begin{proof}
Let $\V(T) = \{ v_1,\dotsm,v_{d+1} \}$.  Without loss of
generality, suppose $v_{d+1}$ is collinear with
$\int(T)\cap \Z^d$.
Let $w_1$ be the point in  $\int(T)\cap \Z^d$ closest to $v_{d+1}$
and 
\[ \mathscr{W} = \{\ w_i\ :\ w_i 
= v_{d+1} + i (w_1-v_{d+1}),\ 0 \le i \le k\  \}. \]
Let $S_{k+1,d} = \conv(v_1,\dotsm, v_d, w_k)$. 
For $1\le i \le k$ and $1\le j \le d$, let
\[ S_{i,j} = \conv(\V(T)\backslash \{ v_j, v_{d+1} \} \cup \{
w_i, w_{i-1}\}). \]
It is easy to check that $\T = \{ S_{i, j} \}$ is a triangulation of $T\cap \Z^d$.
In fact, this triangulation corresponds to the refinement sequence
in Corollary \ref{simplextriangulation}.

Let $\T'$ be another full triangulation of $T$.
Consider any $S\in \T'$. Since $\T'$ is a full
triangulation, $S$ is necessarily empty and clean.
These two conditions force
\[1 \le |\V(S) \cap \mathscr{W}| \le 2. \]

{\bf Case 1:} If $|\V(S)\cap \mathscr{W}| = 1$, then there
exist $v\in \V(T)$ and $w \in \mathscr{W}$ such that
\[ S = \conv(\V(T)\backslash \{ v \} \cup \{ w \}). \]
Since $S$ is empty, it follows that $w=w_k$, $v=v_{d+1}$,
and $S = S_{k+1,d}$.

{\bf Case 2:} If $|\V(S) \cap \mathscr{W}| = 2$, then there
exist $\{ v, v' \} \subset \V(T)$ and $\{ w, w' \} \subset \mathscr{W}$
such that 
\[ S = \conv(\V(T) \backslash \{ v, v' \} \cup \{ w , w' \}). \]
Without loss of generality, suppose $w$ is closer to $w_0$ than $w'$ is to 
$w_0$. Since $S$ is clean, 
\begin{equation}\label{ithlevel}
(w,w') = (w_{i-1}, w_{i})
\end{equation} for some
$1 \le i \le k$. Moreover, $w_0$ ($= v_{d+1}$) is a vertex of $S$ if
and only if $i=1$ in (\ref{ithlevel})
since $S$ must also be empty. 
Thus $S = S_{i,j}$ for some appropriate $j$,
and $\T'\subseteq \T$. 
Consequently, $\T'=\T$.
\end{proof}

%
%

\section{Unimodular Transformations}

A {\em unimodular transformation} $f:\Z^d \to \Z^d$ is an affine
map of the form
\[ f(v) = v \cdot \matrixm  + u, \]
where $u,v \in \Z^d$, $\matrixm$ is a $d\times d$ matrix with
entries in $\Z$, and $\det(\matrixm) = \pm 1$. A {\em translation}
is a unimodular transformation in which $\matrixm = \matrixi_d$,
where $\matrixi_d$ is the $d\times d$ identity matrix.
If $\det(\matrixm) = \pm 1$, $\matrixm$ is invertible, 
then $\matrixm^{-1}$ is unimodular, and
$f$ is one-to-one. Thus $P$ is a lattice $d$-polytope if and only if
$f(P)$ is a lattice $d$-polytope with the same number of vertices.
Using barycentric coordinates, it is easy to check that
$f(w)$ is
an interior 
point of $f(P)$ if and only if
$w$ is
an interior point of $P$. Similarly, $f(w)$
is a boundary point of $f(P)$ if and only if $w$ is a boundary
point of $P$ (cf. \cite{BR}).
We say
$P_1$ and $P_2$ are {\em equivalent} lattice $d$-polytopes 
(and write $P_1\simeq P_2$) if there exists a unimodular transformation
 $f$ such that
$P_1 = f(P_2)$.

\begin{lemma} \label{unitsimplex}
Let $T$ be a $d$-simplex with $\Vol(T) = \frac{1}{d!}$. Then
$T$ is equivalent to $T_{1,\dotsm,1}$, the $d$-simplex
whose vertex set consists of the origin, $e_i$ for $1\le i \le d-1$,
and the point $(1,\dotsm,1)$.
\end{lemma}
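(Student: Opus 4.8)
The plan is to reduce $T$ to the standard $d$-simplex by a single unimodular transformation, and then observe that the standard simplex is itself unimodularly equivalent to $T_{1,\dotsm,1}$; transitivity of $\simeq$ then finishes the proof.

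First I would normalize a vertex. Write $\V(T) = \{ v_1, \dotsm, v_{d+1} \}$. Since translation by $-v_{d+1} \in \Z^d$ is a unimodular transformation and preserves the equivalence class, we may assume $v_{d+1} = \origin$. Because $T$ is a $d$-simplex, the vectors $v_1, \dotsm, v_d$ are linearly independent, so the $d\times d$ integer matrix $\matrixm$ whose rows are $v_1, \dotsm, v_d$ is invertible, and
\[ \frac{1}{d!} = \Vol(T) = \frac{1}{d!}\, |\det \matrixm|, \]
which forces $|\det \matrixm| = 1$. Hence $\matrixm^{-1}$ also has integer entries and determinant $\pm 1$, so $f(v) = v\cdot \matrixm^{-1}$ is a unimodular transformation. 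It fixes $\origin$ and sends $v_i \mapsto e_i$ for $1\le i \le d$, so $T \simeq \conv(\origin, e_1, \dotsm, e_d)$, the standard $d$-simplex.

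Next I would produce an explicit unimodular map from the standard simplex to $T_{1,\dotsm,1}$. Let $\matrixa$ be the $d\times d$ integer matrix whose first $d-1$ rows are $e_1, \dotsm, e_{d-1}$ and whose last row is $(1, \dotsm, 1)$. Expanding the determinant along the last column gives $\det \matrixa = 1$, so $g(v) = v\cdot \matrixa$ is unimodular; it fixes $\origin$ and each $e_i$ for $1\le i \le d-1$, and sends $e_d \mapsto (1,\dotsm,1)$. Therefore $\conv(\origin, e_1, \dotsm, e_d) \simeq T_{1,\dotsm,1}$, and composing with $f$ yields $T \simeq T_{1,\dotsm,1}$.

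There is no genuine obstacle here: the content is elementary linear algebra, and the only point requiring care is the bookkeeping between the row-vector convention for unimodular maps and the matrix whose rows are the vertices, together with the key observation that the minimality of $\Vol(T)$ is precisely what makes that vertex matrix unimodular, hence invertible over $\Z$.
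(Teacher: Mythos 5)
Your proof is correct and follows essentially the same route as the paper: translate a vertex to the origin, use $\Vol(T) = \frac{1}{d!}\,|\det \matrixm|$ to conclude the vertex matrix is unimodular and invert it to land on the standard simplex, then compose with the shear fixing $e_1,\dotsm,e_{d-1}$ and sending $e_d \mapsto (1,\dotsm,1)$. The only cosmetic difference is that the paper justifies the volume--determinant identity by explicitly constructing the containing parallelepiped, whereas you invoke the formula directly.
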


\begin{proof}
Let $v_1, \dotsm, v_{d+1}$ be an enumeration of the vertices of 
$T$. If necessary, translate $T$ so that one of its vertices is
the origin. Without loss of generality, suppose $v_{d+1}=\origin$.
Let
\[ P = \conv\left(v_1,\dotsm,v_{d+1}, \sum_{i=1}^{d} v_i\right). \]
Note that $P$ is a parallelepiped containing $T$,
and $\Vol(P) = \pm \det(\matrixa)$, 
where $\matrixa$ is a $d\times d$ matrix whose entry $a_{i,j}$
is the $j$-th coordinate of $v_i$.
Since 
\[\frac{1}{d!} = \Vol(T) = \frac{1}{d!} \cdot \Vol(P) = \frac{1}{d!}
\cdot |\det(\matrixa)|, \]
$\det(\matrixa) = \pm 1$ and $\matrixa$ is invertible. Thus, there exists
a unique, unimodular, 
$d\times d$ matrix $\matrixm$ such that $\matrixa \matrixm = \matrixi_d$.
Thus $f:\Z^d\to\Z^d$ defined by $f(v) = v\cdot \matrixm$ is
a unimodular transformation such that
and $f(v_i) = e_i$ for $1\le i \le d$
and $f(v_{d+1}) = \origin$. Let $g:\Z^d \to \Z^d$ be 
the unimodular transformation given by
$g(v) = v \cdot \matrixm'$ where
\[
 \matrixm' = \left[ \begin{array}{cc}
 \matrixi_{d-1}  & 0\\
 1 & 1 
\end{array}
\right]. 
\]
It is easy to check that $g(e_i) = e_i$ for $1\le i \le d-1$,
$g(e_{d}) = (1,\dotsm,1)$, and $g(\origin) = \origin$.
Then the composition $g\circ f$, where
$(g \circ f)(v) = v \cdot \matrixm \cdot \matrixm'$,
is also a unimodular transformation, and
$(g\circ f)(T) = T_{1,\dotsm,1}$.
\end{proof}

Let $T_{a_1,\dotsm,a_d}$ denote the $d$-simplex whose vertex set
consists of the origin, the unit points $e_1,\dotsm, e_{d-1}$,
and the point $(a_1,\dotsm, a_{d})\in \Z^d$. What can we say about
$T_{a_1,\dotsm,a_d}$?
If $a_d<0$, then
we can apply  $f:\Z^d \to \Z^d$, where
\begin{equation}\label{positivefinalcoordinate}
 f(v) = v \cdot \left[
\begin{array}{cr}
\matrixi_{d-1} & 0\\
0 & -1 
\end{array}
\right].
\end{equation}
Note that $f$ is simpy a reflection in the $d$-th coordinate.
By reflection, if necessary, we can take $a_d$ to be nonnegative.
Moreover, $a_d$ is determined by the volume of $T_{a_1,\dotsm,a_d}$
since
\begin{equation} \label{asubd}
0< d!\cdot \Vol(T_{a_1,\dotsm, a_d})= \pm 
\det
\left[
\begin{array}{ccc}
a_1 & \dotsm    & a_d\\
\multicolumn{2}{c}{\matrixi_{d-1}}    & 0 
\end{array}
\right]
= 
| a_{d}|.
\end{equation}

For the remaining $a_i$, consider 
the integers $b_i$
such that such that 
\[ 0 \le a_i + b_i a_d < a_d,\] and the
transformation
$g:\Z^d\to \Z^d$, where
\begin{equation} \label{asubinonnegative}
g(v) = v \cdot \left[ 
\begin{array}{ccc|c}
& & & 0\\
 & \matrixi_{d-1} & & \vdots \\
& & & 0\\
\hline
b_1 & \dotsm & b_{d-1} & 1
\end{array}
\right].
\end{equation}
All the vertices of $T_{a_1,\dotsm,a_d}$ excluding
$(a_1,\dotsm, a_d)$ are fixed under $g$. On the other hand,
$g$ sends $(a_1,\dotsm,a_d)$ to
$(a_1',\dotsm,a_d')$
where $a_d' = a_d$ and
$a_i'=  a_i + b_i a_d$ for $1\le i \le d-1$.
Since $T_{a_1,\dotsm,a_d}$ is not contained in any
hyperplane, $a_i > 0$ and $a_i'>0$.
Thus the class of all $T_{a_1,\dotsm,a_d} \in \Skd$ is represented by
$T_{a_1',\dotsm,a_d'}$ where
\begin{equation} \label{aiarepositive}
 0 < a_i ' < a_d'=a_d.
\end{equation}

Lastly, let $w\in \int(T_{a_1,\dotsm,a_d})$ and let $(\lambda_i)$ be the
barycentric coordinates of $w$ relative to $T_{a_1,\dotsm, a_d}$, where
\[ w = \lambda_{d+1}\cdot \origin + \lambda_d \cdot (a_1,\dotsm, a_d)
+ \sum_{i=1}^{d-1} \lambda_i e_i. \]
The $d$-th coordinate of $w$ is
$\lambda_d \cdot a_d$, and for $1\le i \le d-1$, the $i$-th 
coordinate is
\begin{equation} \label{ithcoordinate}
\lambda_i  + \lambda_{d} \cdot a_i
= \lceil \lambda_{d} \cdot a_i \rceil .
\end{equation}
We can conclude that every point in $\int(T_{a_1,\dotsm,a_d})\cap \Z^d$
is completely determined by its $d$-th coordinate. Equivalently,
no two interior lattice points can have the same $d$-th
coordinate.

\begin{theorem} \label{hasgeneralform}
Let $d\ge 3$ and $k\ge 1$.
If $T\in \Skd$, and $\Vol(T) = \frac{1}{d!}(dk+1)$,
then 
$T\simeq T_{a_1,\dotsm, a_d},$
where 
$(a_1,\dotsm,a_{d-1},  a_{d}) = (dk,\dotsm, dk, dk+1)$.
\end{theorem}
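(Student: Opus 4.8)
We would like to exhibit the promised normal form. The plan is to use the structure theory already developed to normalise $T$ down to $T_{a_1,\dots,a_d}$ and then read off the $a_i$ from the positions of the interior points.

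\medskip

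\emph{Setting up coordinates.} By Corollaries~\ref{allcollinear} and~\ref{evenlyspaced}, after a translation we may assume the vertex of $T$ lying on the line $L$ through $\int(T)\cap\Z^d$ is $\origin$; then, writing $\V(T)=\{v_1,\dots,v_d,v_{d+1}\}$ with $v_{d+1}=\origin$ and letting $w_1$ be the interior lattice point closest to $\origin$, the interior lattice points of $T$ are exactly the points $c\,w_1$ for $c=1,\dots,k$. Since $\Vol(T)=\tfrac1{d!}(dk+1)$ and $d\ge3$, Corollary~\ref{uniquetriangulation} gives the explicit full triangulation $\T$ of $T\cap\Z^d$ with $|\T|=dk+1$; as the volumes of its members sum to $\Vol(T)$ and each is at least $\tfrac1{d!}$, every simplex in $\T$ has volume exactly $\tfrac1{d!}$ and hence (as in Lemma~\ref{unitsimplex}) its edge vectors from any vertex form a $\Z$-basis. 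In particular the member $S_{1,d}=\conv(v_1,\dots,v_{d-1},w_1,\origin)$ of $\T$ has this property, so $\{v_1,\dots,v_{d-1},w_1\}$ is a $\Z$-basis. First I would apply the unimodular transformation $v\mapsto v\matrixm$, where $\matrixm$ is the inverse of the matrix whose rows are $v_1,\dots,v_{d-1},w_1$: it fixes $\origin$, sends $v_i\mapsto e_i$ $(1\le i\le d-1)$ and $w_1\mapsto e_d$, hence $c\,w_1\mapsto (0,\dots,0,c)$ and $v_d\mapsto(a_1,\dots,a_d)$ for some integers $a_i$. Thus $T\simeq T_{a_1,\dots,a_d}$, and the interior lattice points of $T_{a_1,\dots,a_d}$ are precisely the points $(0,\dots,0,c)$, $c=1,\dots,k$.

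\medskip

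\emph{Computing the $a_i$.} For $a_d$: equation~(\ref{asubd}) gives $|a_d|=d!\,\Vol(T_{a_1,\dots,a_d})=d!\,\Vol(T)=dk+1$, while the $d$-th coordinate of any interior point of $T_{a_1,\dots,a_d}$ equals $\lambda_d a_d$ for its barycentric coordinate $\lambda_d\in(0,1)$; since the interior point $(0,\dots,0,1)$ has positive $d$-th coordinate, $a_d>0$, so $a_d=dk+1$. For the rest, fix $c$ with $1\le c\le k$ and expand $(0,\dots,0,c)$ in barycentric coordinates $(\mu_i)$ relative to $T_{a_1,\dots,a_d}$ (with $\mu_{d+1}$ the coordinate at $\origin$). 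Comparing coordinates gives $\mu_d=\tfrac{c}{dk+1}$ and $\mu_i=-\tfrac{c\,a_i}{dk+1}$ for $1\le i\le d-1$; interiority ($\mu_i>0$) forces $a_i<0$, and then $\mu_{d+1}=1-\tfrac{c}{dk+1}\bigl(1+\sum_{i=1}^{d-1}|a_i|\bigr)$. Imposing $\mu_{d+1}>0$ with $c=k$ yields $\sum_{i=1}^{d-1}|a_i|<d-1+\tfrac1k$, hence $\sum_{i=1}^{d-1}|a_i|\le d-1$; since each $|a_i|\ge1$ and there are $d-1$ of them, equality holds throughout, so $a_i=-1$ for $1\le i\le d-1$ and $T\simeq T_{-1,\dots,-1,\,dk+1}$.

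\medskip

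\emph{Finishing.} Finally I would apply the unimodular transformation~(\ref{asubinonnegative}) with $b_1=\dots=b_{d-1}=1$, which fixes $\origin$ and $e_1,\dots,e_{d-1}$ and carries $(-1,\dots,-1,dk+1)$ to $(dk,\dots,dk,dk+1)$; hence $T_{-1,\dots,-1,dk+1}\simeq T_{dk,\dots,dk,dk+1}$ and $T\simeq T_{a_1,\dots,a_d}$ with $(a_1,\dots,a_{d-1},a_d)=(dk,\dots,dk,dk+1)$.

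\medskip

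The routine points — that $S_{1,d}$ is a nondegenerate $d$-simplex, that unimodular maps preserve volume and the set of interior lattice points (both already recorded in the paper), and the coordinate bookkeeping in the barycentric expansion — I would dispatch in a line or two each. The genuinely delicate step, and the heart of the argument, is the squeeze in the middle paragraph: from the single inequality $\mu_{d+1}>0$ at $c=k$ one must force $\sum_{i=1}^{d-1}|a_i|$ down to its minimum value $d-1$, after which everything else is normalisation. It is also worth flagging that $d\ge3$ enters precisely through Corollary~\ref{uniquetriangulation} (and the collinearity corollaries it rests on), which is what supplies the unimodular triangulation, and hence the lattice basis $\{v_1,\dots,v_{d-1},w_1\}$, used at the start.
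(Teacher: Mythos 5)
Your proof is correct and follows essentially the same route as the paper: extract the unit-volume simplex $S_{1,d}$ from the triangulation of Corollary \ref{uniquetriangulation}, map it unimodularly to a standard position, and pin down the remaining vertex $(a_1,\dots,a_d)$ by forcing the barycentric coordinates of the farthest interior point $w_k$ to be positive. The only difference is cosmetic: you normalise $w_1$ to $e_d$ rather than to $(1,\dots,1)$, which makes the final squeeze ($\sum_{i=1}^{d-1}|a_i|\le d-1$ with each $|a_i|\ge 1$) somewhat cleaner than the paper's inequality chain, at the cost of a concluding shear to reach $(dk,\dots,dk,dk+1)$.
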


\begin{proof}
Let $\V(T)$, $v_{d+1}$,
$\mathscr{W}$,  $S_{i,j}$, and $\T$ be as in the proof of Corollary
\ref{uniquetriangulation}. For $S_{1,d} \in \T$, Lemma
\ref{unitsimplex} implies there exists a unimodular transformation
$f$ such that 
$f(v_{d+1}) = \origin$, $f(w_1) = (1,\dotsm,1)$,
and $f(v_j) = e_j$ for $1\le j \le d-1$. Let
\[ f(v_d) = (a_1,\dotsm, a_{d}). \] 
We may assume $0 < a_j \le a_d$ by (\ref{aiarepositive}).
Since $\Vol(T) = \frac{1}{d!}(dk+1)$ and unimodular
transformations preserve volume, (\ref{asubd}) implies
 $a_d = dk+1$. 
Unimodular transformations also 
preserve interior lattice points, so
 $f(w_1) \in \int(f(T)) \cap \Z^d$, and
$f(w_1)$ is collinear with $\origin$.
Corollary \ref{evenlyspaced} implies
$f(w_i)  = (i,\dotsm, i)$.
If $(\lambda_{i,j})$ are the barycentric coordinates of $f(w_i)$,
where
\[ f(w_i) = \lambda_{i,d+1} \cdot \origin
+ \lambda_{i,d} \cdot (a_1,\dotsm, a_d) +
\sum_{j=1}^{d-1} \lambda_{i,j}\cdot  e_j, \]
then (\ref{ithcoordinate}) implies
\[ \lambda_{i,j} + \lambda_{i,d}\cdot a_j = i. \]
Note that $0 < \lambda_{i,j} < 1$, and
\[ f(w_i) \in \int(f(T))\cap \Z^d
\iff w_i \in \int(T)\cap \Z^d \iff 1\le i \le k.\]
Since
\[
\lambda_{i,d} \cdot a_j  = (i-1) +  1- \lambda_{i,j} 
   > (i-1)\lambda_{i,d} +  1 - 
    \sum_{\substack{j=1\\j\not=d}}^{d+1} \lambda_{i,j}
   > i\cdot \lambda_{i,d},
\]
$a_j > i$ for all $i$, which implies $a_j > k \ge 1$.
The final step is to show that $a_j = dk$.

Consider the interior point $f(w_k) = (k,k,\dotsm,k)$ and
let $(\lambda_j)$ be the barycentric coordinates of $f(w_k)$.
Clearly $\lambda_d = \frac{k}{dk+1}$.
By  (\ref{ithcoordinate}), we have
\[ \lambda_j = k - a_j \cdot \lambda_d = k - \frac{a_j \cdot k}{dk+1}. \]
Since $\ds \sum_{j=1}^{d+1} \lambda_j = 1$,
it follows that 
\[ 0 < \sum_{j=1}^d \lambda_j
 = (d-1)k - \frac{k}{dk+1}(a_1 + \dotsm + a_{d-1}-1)  < 1. \]
Using elementary algebraic manipulation, we obtain
\[ 1 \le (kd+1)(d-1) - (a_1 + \dotsm + a_{d-1}-1) \le d, \]
which implies
\begin{equation} \label{remainingaj}
a_1 + \dotsm + a_{d-1} \ge kd(d-1).
\end{equation}
Finally, $a_j < a_d = dk+1$ and (\ref{remainingaj})
imply $a_j = dk$ for $1\le j \le d$.
\end{proof}

\begin{corollary}
If $T\in \Skd$, then $\Vol(T) = \frac{1}{d!}(dk+1)$ if and only if
$T\simeq S_d(k)$.
\end{corollary}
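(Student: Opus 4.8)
The plan is to prove both directions of the equivalence, using the results assembled in the previous sections together with the final claim about $S_d(k)$.

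\textbf{Forward direction.} Suppose $T\in\Skd$ and $\Vol(T)=\frac{1}{d!}(dk+1)$. First I would dispose of the degenerate small cases: when $d\ge 3$, Theorem \ref{hasgeneralform} applies directly and gives $T\simeq T_{a_1,\dotsm,a_d}$ with $(a_1,\dotsm,a_{d-1},a_d)=(dk,\dotsm,dk,dk+1)$. So the bulk of the work is to identify this $T_{dk,\dotsm,dk,dk+1}$ with $S_d(k)$ up to unimodular equivalence. I would exhibit an explicit unimodular transformation sending one to the other: $S_d(k)=\conv(e_1,\dotsm,e_d,-k\sum e_i)$, so after translating by $-(-k\sum_{i=1}^d e_i)=k\sum e_i$ the vertex $-k\sum e_i$ goes to the origin and $e_i$ goes to $e_i+k\sum_j e_j=(k,\dotsm,k,k+1,k,\dotsm,k)$ (the $(k+1)$ in slot $i$). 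Applying a suitable integer matrix of determinant $\pm 1$ that realizes the change of basis from $\{e_i+k\sum_j e_j\}_{i=1}^d$ to the standard generating set $\{e_1,\dotsm,e_{d-1},(dk,\dotsm,dk,dk+1)\}$ finishes it; the key numerical check is that $\det$ of the matrix with rows $e_i+k\sum_j e_j$ equals $\pm(dk+1)$, matching $d!\Vol$, and that after the normalizing steps of Section 7 (equations (\ref{aiarepositive})--(\ref{ithcoordinate})) the representative is forced to be exactly $T_{dk,\dotsm,dk,dk+1}$. Alternatively, and more cleanly, I would simply verify that $S_d(k)$ itself, after translation sending $-k\sum e_i\mapsto\origin$, is unimodularly equivalent to $T_{a_1,\dotsm,a_d}$ for \emph{some} admissible $(a_i)$, and then invoke the uniqueness built into Section 7 (that the normalized representative $T_{a_1',\dotsm,a_d'}$ with $0<a_i'<a_d'$ is unique) together with Theorem \ref{hasgeneralform} to conclude the two representatives coincide.

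\textbf{Reverse direction.} Suppose $T\simeq S_d(k)$. Since unimodular transformations preserve volume, the number of interior lattice points, cleanness, and the property of being a $d$-simplex, it suffices to check that $S_d(k)\in\Skd$ and $\Vol(S_d(k))=\frac{1}{d!}(dk+1)$. The volume is a determinant computation: translating $-k\sum e_i$ to the origin, $d!\Vol(S_d(k))=\lvert\det M\rvert$ where $M$ has rows $e_i+k\sum_j e_j=\matrixi_d+k\,\one\one^{\mathsf T}$ (all-ones outer product), and by the matrix-determinant lemma $\det(\matrixi_d+k\,\one\one^{\mathsf T})=1+k\,\one^{\mathsf T}\one=1+kd$. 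For the interior-point count and cleanness, I would write a general point of $S_d(k)$ in barycentric coordinates $(\mu_1,\dotsm,\mu_d,\mu_{d+1})$ with $\mu_i\ge 0$, $\sum\mu_i=1$, corresponding to $x=\sum_{i=1}^d\mu_i e_i-k\mu_{d+1}\sum_{i=1}^d e_i$, so the $j$-th coordinate of $x$ is $\mu_j-k\mu_{d+1}$. Then $x\in\Z^d$ forces $\mu_j-k\mu_{d+1}\in\Z$ for all $j$; summing, $1-\mu_{d+1}-dk\mu_{d+1}\in\Z$, i.e. $(dk+1)\mu_{d+1}\in\Z$, so $\mu_{d+1}=\tfrac{m}{dk+1}$ for some integer $m$ with $0\le m\le dk+1$, and then each $\mu_j=km\mu_{d+1}\bmod 1$ combined with $\sum_{j=1}^d\mu_j=1-\mu_{d+1}$ pins down the $\mu_j$ completely. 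Working through which values of $m$ give strictly positive $\mu_j$ (interior) versus some $\mu_j=0$ (boundary), one finds the boundary lattice points are exactly the $d+1$ vertices ($m=0$ gives $-k\sum e_i$; $m=dk+1$ gives, after the integer reduction, the $e_i$), establishing cleanness, and the interior points correspond to $m=1,\dotsm,k$ scaled appropriately — there are exactly $k$ of them, lying on the line through the vertex $-k\sum e_i$. Actually the slickest route is to note $S_d(k)$ is unimodularly equivalent to $T_{dk,\dotsm,dk,dk+1}$ (the forward-direction computation) and then read off cleanness and the interior count directly from the coordinate description in Section 7, particularly (\ref{ithcoordinate}) and the remark that interior lattice points are determined by their $d$-th coordinate, which ranges over exactly $k$ admissible values.

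\textbf{Main obstacle.} The only real content beyond bookkeeping is the explicit unimodular equivalence $S_d(k)\simeq T_{dk,\dotsm,dk,dk+1}$ and the verification $\Vol(S_d(k))=\frac{1}{d!}(dk+1)$; everything else is assembled from Theorem \ref{hasgeneralform} and the preservation properties of unimodular maps. I expect the fiddly point to be matching normalizations — making sure the representative produced from $S_d(k)$ after translation and reflection really lands in the canonical form $0<a_i'<a_d'$ used in Section 7 so that Theorem \ref{hasgeneralform}'s uniqueness can be cited — rather than anything conceptually deep. One must also remember to handle $d\ge 3$ (the hypothesis of Theorem \ref{hasgeneralform}); for $d=2$ the statement fails, consistent with the Introduction's remarks, so no claim is made there.
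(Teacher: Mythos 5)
Your proposal is correct and follows essentially the same route as the paper: Theorem \ref{hasgeneralform} carries the direction from minimal volume to the canonical form $T_{dk,\dotsm,dk,dk+1}$, an explicit unimodular map identifies that simplex with $S_d(k)$ (the paper writes this map out immediately after the corollary), and a determinant computation gives $\Vol(S_d(k))=\frac{1}{d!}(dk+1)$. The one divergence is that you verify cleanness and the interior-point count of $S_d(k)$ by direct barycentric computation, whereas the paper deduces cleanness indirectly from the volume equality together with the triangulation lower bounds of (\ref{sizeofbasictriangulations}) and Corollary \ref{simplextriangulation}; your direct check is more work but more self-contained.
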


\begin{proof}
We first show that $\Vol(S_d(k)) = \frac{1}{d!}(dk+1)$.
Using determinants, 
\[ \Vol(S_d(k)) = \frac{1}{d!}\cdot |\det(\matrixa)|, \]
where
\[ \matrixa =
\left[ 
\begin{array}{c}
e_2 - e_1 \\
e_3 - e_1 \\
\vdots \\
e_{d-1} - e_1 \\
e_d - e_1 \\
(-k,-k,\dotsm,-k)-e_1
\end{array}
\right]
= \underbrace{\left[
\begin{array}{c|ccccc}
-1                \\
-1 &  & \matrixi_{d-1} \\
-1  \\
\hline
-k-1 & -k  & \dotsm & -k
\end{array}
\right]}_{d\times d}.
\]
Let $\matrixa'$ be the matrix obtained by
adding $k$ times the sum of the first $d-1$ rows of $\matrixa$
to row $d$ of $\matrixa$. That is,
\[ \matrixa' = 
\left[\begin{array}{c|ccccc}
-1                \\
-1 &  & \matrixi_{d-1} \\
-1  \\
\hline
-dk-1 & 0  & \dotsm & 0
\end{array}
\right].
\] 
It is easy to check that $|\det(\matrixa)| = |\det(\matrixa')| = dk+1$.
If $T\simeq S_d(k)$, then $\Vol(T) = \Vol(S_d(k)) = \frac{1}{d!}(dk+1)$.
On the other hand, if $\Vol(T) = \frac{1}{d!}(dk+1)$, then
$T\simeq S_d(k)$ by Theorem \ref{hasgeneralform} provided $S_d(k)$ is clean.
Since $\Vol(S_d(k)) = \frac{1}{d!}(dk+1)$ and any lattice triangulation of
$\V(S_d(k)) \cup \int(S_d(k))\cap \Z^d$ contains at least $dk+1$ simplices,
(\ref{sizeofbasictriangulations}) and Corollary \ref{simplextriangulation}
imply $S_d(k)$ cannot have any lattice points on its boundary except for
its vertices.
\end{proof}

Let $f:\Z^d \to \Z^d$ be the translation
\[ f(v) = v \cdot \matrixi_d  + k\cdot\sum_{i=1}^d e_i = v + (k,k,\dotsm,k), \]
and let
$g:\Z^d \to \Z^d$ be the transformation $g(v) = v\cdot \matrixm$
where
\[ \matrixm =  
\underbrace{
\left[
\begin{array}{cccccc}
1-k &  -k & -k & \dotsm & -k &   -k \\
-k & 1-k & -k & \dotsm &  -k & -k \\
-k & -k & 1-k & \dotsm &  -k & -k \\
\vdots & \vdots & \vdots &\ddots & \vdots & \vdots\\
-k & -k & -k & \dotsm & 1-k & -k \\
(d-1)k & (d-1)k & (d-1)k & \dotsm & (d-1)k & (d-1)k+1 
\end{array}
\right]}_{d\times d}.
\]
That is, if $m_{i,j}$ is the entry  in the row $i$
and column $j$ of $\matrixm$, then
\[ m_{i,j} =
\begin{cases}
 1-k, & i=j\not=d,\\
(d-1)k, & j=d,\ i\not=d\\
(d-1)k+1, & i=j=d\\
-k, & \text{otherwise}.
\end{cases}
\]
Consider the image of $\V(S_d(k))$ under the composition $g\circ f$.
For $v\in \V(S_d(k))$,
the $j$-th coordinate of $v$ under $g\circ f$ is simply the
dot product of $v+(k,\dotsm,k)$ with the $j$-th column of $\matrixm$.
It is easy to check that
\[ (g\circ f)(v)
= \begin{cases}
\origin, & v = -k\sum_{i=1}^d e_i,\\e_i, & v = e_i,\ 1\le i \le d-1,\\
(a_1,\dotsm,a_d), & v=e_d.
\end{cases}
\]
The composition $g\circ f$ is in fact a unimodular transformation
which maps $S_d(k)$ to $T_{a_1,\dotsm,a_d}$, where the $a_i$
are as in Theorem \ref{hasgeneralform}.

%
%
\section{Counterexamples in $\R^2$}

The collinearity property of the interior lattice points does not
hold for all clean triangles in $\R^2$.
Pick's theorem states that if $P$ is a lattice polygon, then
\begin{equation} \label{picks}
 \Vol(P)  = k + \frac{b}{2} - 1
\end{equation}
where $k = |\int(P)\cap \Z^2|$ and $b =|\partial P \cap \Z^2|$.
For clean triangles, 
(\ref{picks}) reduces to
\[ \Vol(P) = k+\frac{1}{2}. \]
Thus all clean lattice triangles satisfy the minimal volume condition.
In \cite{BR86}, Reznick proved that any $T\in S_k^2$ is 
equivalent to some $T_{a,2k+1}$, where $0<a<2k+1$. 
Using this representation for clean lattice triangles,
Reznick then showed that the number of equivalence classes 
of clean $k$-point lattice triangles increases with $k$.
Thus there exist clean lattice triangles
whose interior points are not collinear.

Consider the triangle $\Delta_{p,q}$ in $\R^2$ with vertex set
$\{\ (-1,0),\ (0,q),\ (p,-1)\ \}$,
where $1 \le p \le q$ and $\gcd(p,q+1) = 1$. It is easy to check
that $\Delta_{p,q}$ is clean. 
We first compute the number of interior
points of $\Delta_{p,q}$.
Using determinants, 
\[ \Vol(\Delta{p,q}) = \frac{q}{2}(p+1) + \frac{1}{2}. \]
Pick's theorem implies
\begin{equation} \label{pointsindeltapq}
|\int(\Delta{p,q}) \cap \Z^2| = \frac{q}{2}(p+1).
\end{equation}
For $p=1$, the interior points of
$\Delta_{p,q}$ are all on the line $x=0$. However, for $p>1$,
this is not the case as they are covered by the 
lines $x=i$ ($0 \le i \le p-1$). 
Since these lines are parallel, they cannot be
contained within any one line.
Incidentally, this collection of counterexamples
is related to the following summation identity.

\begin{proposition}
If $(p,q)\in \Z_+^2$ and $\gcd(q+1,p)=1$, then
\begin{equation} \label{countingidentityone}
 \sum_{i=0}^{p-1} \left\lceil q-\frac{i(q+1)}{p} \right\rceil 
= \frac{q}{2}(p+1).
\end{equation}
\end{proposition}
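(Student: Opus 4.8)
The proposition is the ``two-way counting'' companion to the construction of the triangles $\Delta_{p,q}$. The plan is to count $|\int(\Delta_{p,q})\cap\Z^2|$ in two ways. The global count already appears as (\ref{pointsindeltapq}): it comes from the determinant computation $\Vol(\Delta_{p,q})=\frac{q}{2}(p+1)+\frac12$ together with Pick's theorem (\ref{picks}), with $b=3$ since $\Delta_{p,q}$ is clean, and it equals the right-hand side $\frac{q}{2}(p+1)$ of (\ref{countingidentityone}). So it suffices to show the left-hand side of (\ref{countingidentityone}) is this same count, which I would obtain by slicing $\Delta_{p,q}$ into vertical columns.

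First I would observe that every interior lattice point of $\Delta_{p,q}$ has first coordinate in $\{0,1,\dots,p-1\}$: the vertices $A=(-1,0)$, $B=(0,q)$, $C=(p,-1)$ have first coordinates $-1$, $0$, $p$, and the lines $\{x=-1\}$, $\{x=p\}$ meet $\Delta_{p,q}$ only at $A$, $C$ respectively. Hence the total count is $\sum_{i=0}^{p-1}N_i$, where $N_i$ is the number of interior lattice points of $\Delta_{p,q}$ on the line $\{x=i\}$, and the proposition reduces to the claim that $N_i=\lceil q-i(q+1)/p\rceil$ for $0\le i\le p-1$.

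To prove this claim I would write out the three edge lines of $\Delta_{p,q}$. For $0<x<p$ the triangle lies between the lower edge $AC$, of height $-\frac{x+1}{p+1}$, and the upper edge $BC$, of height $q-\frac{(q+1)x}{p}$. When $1\le i\le p-1$ the lower height lies strictly in $(-1,0)$, while the upper height is not an integer, because $\gcd(q+1,p)=1$ forces $p\nmid(q+1)i$ (if $p\mid(q+1)i$ then $p\mid i$, impossible for $1\le i\le p-1$). Thus the integers $y$ with $-\frac{i+1}{p+1}<y<q-\frac{(q+1)i}{p}$ are exactly $0,1,\dots,\lfloor q-(q+1)i/p\rfloor$ (an empty range when this floor is negative, which is consistent since then the ceiling is $0$ too), so $N_i=\lfloor q-(q+1)i/p\rfloor+1=\lceil q-(q+1)i/p\rceil$, using that the argument is non-integral. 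The column $i=0$ passes through the apex $B=(0,q)$ and must be treated separately: a direct check shows its interior lattice points are exactly $(0,0),(0,1),\dots,(0,q-1)$, so $N_0=q=\lceil q-0\rceil$. Summing over $0\le i\le p-1$ and equating with (\ref{pointsindeltapq}) yields (\ref{countingidentityone}).

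The only delicate points are the boundary bookkeeping -- handling the $i=0$ column through the apex $B$, and verifying throughout that we are counting genuinely interior (as opposed to boundary) lattice points, so that the $N_i$ and Pick's theorem refer to the same set -- together with the elementary fact that $p\nmid(q+1)i$; everything else is routine edge-line algebra and floor/ceiling manipulation. Note that the restriction $1\le p\le q$ imposed when $\Delta_{p,q}$ was introduced plays no role here, so the argument covers all $(p,q)\in\Z_+^2$ with $\gcd(q+1,p)=1$. One may also bypass the geometry altogether: using $p\nmid(q+1)i$ one rewrites the left side of (\ref{countingidentityone}) as $pq-\sum_{i=1}^{p-1}\lfloor i(q+1)/p\rfloor$ and invokes the classical identity $\sum_{i=1}^{n-1}\lfloor im/n\rfloor=\frac{(m-1)(n-1)}{2}$ valid for $\gcd(m,n)=1$, with $m=q+1$ and $n=p$.
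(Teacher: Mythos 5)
Your proof is correct and follows essentially the same route as the paper's: both count $|\int(\Delta_{p,q})\cap\Z^2|$ by slicing along the vertical lines $x=i$ for $0\le i\le p-1$, identify the $i$-th column count with $\lceil q-i(q+1)/p\rceil$, and equate the total with the Pick's-theorem count $\frac{q}{2}(p+1)$ from (\ref{pointsindeltapq}). You merely supply the edge-line algebra, the coprimality argument for non-integrality, and the $i=0$ bookkeeping that the paper leaves as bare assertions (the closing arithmetic alternative via $\sum_{i=1}^{n-1}\lfloor im/n\rfloor=\frac{(m-1)(n-1)}{2}$ is a nice bonus but not a different proof of record).
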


\begin{proof}
This is a simple counting argument using 
(\ref{pointsindeltapq}), the fact that
the lines $x=i$ ($0\le i \le p-1$) cover 
$\int(\Delta_{p,q})\cap \Z^2$, and the fact that there are
\[ \left\lceil q-\frac{i(q+1)}{p} \right\rceil  \]
lattice points on the line $x=i$ and $\int(\Delta_{p,q})$.
\end{proof}

The counterexample above can be generalized to 
a triangle with vertex set
\[ \{\ (-r,0),\ (0,q),\ (p,-1)\ \}, \]
where $p$, $q$, and $r$ are positive integers and
$\gcd(r,q) = \gcd(p,q+1) = 1$. The corresponding
identity below is similar to (\ref{countingidentityone}). 

\begin{proposition}
If $(p,q,r)\in \Z_+^3$ and $\gcd(q,r) = \gcd(q+1,p) = 1$, then
\[ \sum_{i=1}^{r-1} \left\lceil \frac{iq}{r} \right\rceil 
+ \sum_{i=0}^{p-1} \left\lceil q-\frac{i(q+1)}{p}\right\rceil
= \frac{q(p+r)+(r-1)}{2}. \]
\end{proposition}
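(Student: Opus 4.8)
The plan is to count the interior lattice points of the triangle $\Delta$ with vertex set $\{(-r,0),(0,q),(p,-1)\}$ in two ways, in the same spirit as the proof of (\ref{countingidentityone}). Since $p,q,r$ are positive, $\Delta$ is a nondegenerate lattice triangle, and a determinant computation with the edge vectors $(r,q)$ and $(p+r,-1)$ at the vertex $(-r,0)$ gives $\Vol(\Delta)=\tfrac12\bigl(q(p+r)+r\bigr)$. The three edge vectors of $\Delta$ are $(r,q)$, $(p,q+1)$, and $(p+r,1)$ up to sign, and since $\gcd(r,q)=\gcd(p,q+1)=\gcd(p+r,1)=1$ each has coprime coordinates; hence $\partial\Delta\cap\Z^2=\V(\Delta)$, so $\Delta$ is clean with exactly $3$ boundary lattice points. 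Applying Pick's theorem in the reduced form $\Vol(\Delta)=k+\tfrac12$ then yields
\[ |\int(\Delta)\cap\Z^2|=\Vol(\Delta)-\tfrac12=\frac{q(p+r)+(r-1)}{2}, \]
which is the right-hand side of the asserted identity.

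For the second count I would slice $\Delta$ by the vertical lines $x=i$. Writing the three edges as half-plane inequalities, a point $(i,y)$ lies in $\int(\Delta)$ precisely when $y>-(i+r)/(p+r)$, $y<q(i+r)/r$, and $y<q-i(q+1)/p$; moreover only the lines with $-r<i<p$ meet $\int(\Delta)$. On such a line the lower bound lies in $(-1,0)$, so $y\ge 0$, and among the two upper bounds the binding one is $q(i+r)/r$ when $i<0$ and $q-i(q+1)/p$ when $i\ge 0$ (since $q(i+r)/r\ge q$ exactly when $i\ge 0$). Consequently the line $x=i$ carries $\lceil q(i+r)/r\rceil$ interior lattice points for $-r<i<0$ and $\lceil q-i(q+1)/p\rceil$ for $0\le i<p$; the hypotheses $\gcd(q,r)=1$ and $\gcd(q+1,p)=1$ are exactly what is needed to check that these ceilings count correctly (the argument of the ceiling is an integer only in the trivial case $i=0$, where the count is $q=\lceil q\rceil$, and it never drops to $-1$ or below). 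Summing and reindexing the negative range by $i\mapsto -j$, so that $\sum_{i=-r+1}^{-1}\lceil q(i+r)/r\rceil=\sum_{j=1}^{r-1}\lceil jq/r\rceil$, gives
\[ |\int(\Delta)\cap\Z^2|=\sum_{i=1}^{r-1}\left\lceil\frac{iq}{r}\right\rceil+\sum_{i=0}^{p-1}\left\lceil q-\frac{i(q+1)}{p}\right\rceil, \]
and equating this with the Pick count finishes the proof.

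The only delicate point is the bookkeeping in the slicing step: deciding which of the two upper edges binds on each vertical line (the roles switch at $i=0$), checking that the lower edge never forces $y\ge 1$, and confirming that the strict inequalities together with the excluded vertex $(0,q)$ do not introduce an off-by-one in the ceiling formulas. This is where the coprimality hypotheses are used; everything else is a routine determinant and an application of Pick's theorem.
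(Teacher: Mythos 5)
Your proof is correct and is exactly the argument the paper intends: the paper omits the proof of this proposition, remarking only that it is ``similar to'' the previous identity, whose proof counts $\int(\Delta)\cap\Z^2$ once by Pick's theorem and once by slicing with vertical lines. Your verification of the details (primitivity of the edges so that $b=3$, the switch of the binding upper edge at $i=0$, and the lower bound lying in $(-1,0)$ so each slice contributes exactly the stated ceiling) fills in precisely what the paper leaves to the reader.
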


Pick's theorem itself is a counterexample since only in $\R^2$ is
the volume of a lattice polytope $P$ an equality in terms of the number of
lattice points on $\partial P$
and in $\int(P)$. Reeve's tetrahedra
\cite{RV}
have vertices
\[ \{\ (0,0,0),\ (1,0,0),\ (0,1,0),\ (1,1,n)\ \}, \]
where $n\in \Z_+$.
These tetrahedra are clearly clean and empty, yet their volume increases
with $n$. Reeve concluded that the volume of lattice 
of a polyhedron $P$
cannot be expressed solely in terms of $b=|\partial P\cap \Z^3|$ and
$k=|\int(P)\cap \Z^3|$. However, Reeve was able to find and analogue
to Pick's theorem by considering sublattices.

Let $L_n^d$ denote the lattice consisting of all points 
in $\R^d$ whose coordinates are  multiples of $\frac{1}{n}$.
Note that $L_1^d = \Z^d$. For a lattice $d$-polytope $P$, let 
\[ b_n = b_n(P) = |\partial P \cap L_n^d|
\quad \text{and}\quad k_n = k_n(P) = |\int(P) \cap L_n^d|. \]
Reeve showed in \cite{RVnote} that 
\begin{equation} \label{reeveinrthree}
2n(n^2-1)\cdot \Vol(P) = b_n - n b_1 + 2(k_n - n k_n) +
(n-1)[2\chi(P) - \chi(\partial P)],
\end{equation}
where $n\ge 2$ and $\chi$ is the Euler characteristic.
Reeve also conjectured a similar formula for $d=4$.
Soon after, Macdonald \cite{IM} not only confirmed Reeve's conjecture, but
generalized Reeve's formulas to
\begin{equation} \label{macdonaldvolume}
\begin{split}
(d-1)d!\cdot \Vol(P)  = \sum_{i=1}^{d-1} (-1)^{i-1} \binom{d-1}{i-1}
(b_{d-i} + 2k_{d-i}) \\ + (-1)^{d-1}[2\chi(P) - \chi(\partial P)].
\end{split}
\end{equation}
In 1996, Kolodzieczyk \cite{KK96} showed that (\ref{macdonaldvolume}) has
the following alternate form in which only the numbers $k_n$ of interior lattice
points in the sublattice $L_n$ are required. 
\begin{equation} \label{kkequation}
d! \cdot \Vol(P) = \sum_{i=0}^{d-1} (-1)^i \binom{d}{i} k_{d-i} + (-1)^d [\chi(P)-\chi(\partial P)].
\end{equation}
For a $d$-polytope $P$ all of whose vertices join $d$ edges, 
$\chi(P)-\chi(\partial P) = (-1)^d$ and 
(\ref{kkequation}) assumes the form
\begin{equation}
d!\cdot \Vol(P) = \sum_{i=0}^{d-1} (-1)^i \binom{d}{i} k_{d-i} + 1. 
\end{equation}
Recently, Kolodzieczyk showed in \cite{KK00} that Pick-type formulas exist
even when considering only the points in $L_n\cap P$
whose coordinates are odd multiples of $\frac{1}{n}$.

Though no formula for the volume of a lattice polytope in terms of the
numer of its boundary and interior lattice points exist, 
there does exist
a Pick-type inequality with sharp bounds on the volume of a
polyhedron. Any polyhedron in $\R^3$ that does
not intersect itself has an associated planar graph. 
We can use elementary graph theory and Euler's formula to
obtain a lower bound on the volume of lattice polyhedra.

\begin{proposition} \label{picksinequality}
Let $P\subset \R^3$ be a convex lattice polyhedron with $b$ lattice
points on the boundary and $k\ge 1$ interior lattice points. Then
\[ \Vol(P) \ge \frac{2b+3k-7}{6}. \]
\end{proposition}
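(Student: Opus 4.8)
The plan is to bound $\Vol(P)$ from below by $\frac{1}{6}$ times the number of tetrahedra in a carefully chosen lattice triangulation of $P$, using (\ref{volbytriangulation}), and to make that triangulation large by coning a \emph{full} triangulation of the boundary from an interior lattice point and then invoking the refinement machinery of Theorem~\ref{refinementtheorem}. Concretely, fix $w_1\in\int(P)\cap\Z^3$ (which exists since $k\ge1$) and build a full lattice triangulation $\mathscr{B}$ of the boundary $2$-sphere $\partial P$: fully triangulate each facet of $P$ (every lattice polygon admits a full, i.e.\ unimodular, triangulation), noting that on a common edge of two facets the only full triangulation is the subdivision at the lattice points of that edge, so the facet triangulations glue. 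Since $\partial P$ is thereby triangulated with vertex set exactly $\partial P\cap\Z^3$, a set of $b$ points, Euler's formula $V-E+F=2$ together with the incidence relation $3F=2E$ forces $|\mathscr{B}|=2b-4$. Setting $\T_1=\{\,\conv(S,w_1):S\in\mathscr{B}\,\}$, one checks that $\T_1$ is a (partial) lattice triangulation of $P$ with $\V(\T_1)=(\partial P\cap\Z^3)\cup\{w_1\}$ and $|\T_1|=2b-4$; the only nonroutine condition is (d), which holds because $w_1\in\int(P)$ forces $\conv(S,w_1)\cap\partial P=S$, and $S$ contains no lattice point besides its vertices since $\mathscr{B}$ is full.

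Then I would enumerate the remaining $k-1$ interior lattice points $w_2,\dots,w_k$ and apply Theorem~\ref{refinementtheorem} successively (exactly as in the proof of Corollary~\ref{equivproof} and the remark following it), obtaining $\T_1\prec\T_2\prec\dots\prec\T_k$ with $|\T_i|\ge|\T_{i-1}|+3$. Hence $|\T_k|\ge(2b-4)+3(k-1)=2b+3k-7$, and (\ref{volbytriangulation}) yields $\Vol(P)\ge\frac{1}{6}(2b+3k-7)$. When $k=1$ no refinement is needed and the bound is sharp, for instance at $S_3(1)$, where $b=4$ and $\Vol(S_3(1))=\frac{2}{3}=\frac{2\cdot4+3\cdot1-7}{6}$.

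The step I expect to be the main obstacle is verifying that each refinement adds at least $d=3$ tetrahedra in this generality, since Theorem~\ref{refinementtheorem} and Lemma~\ref{atleastdplusoneminusj} are stated for \emph{clean} $P$, whereas the present hypotheses permit lattice points in relative interiors of proper faces of $P$. But in those proofs cleanliness is used only to force the face $S$ carrying the newly inserted point to satisfy $S\not\subseteq\partial P$, and that is automatic here: $\V(\T_1)$ already contains every lattice point of $\partial P$, so no face of any $\T_i$ lying in $\partial P$ can contain an interior lattice point in its relative interior, while $w_2,\dots,w_k$ are interior. A minor secondary point is the existence and edge-compatibility of full lattice triangulations of the facets, which is classical in dimension~$2$.
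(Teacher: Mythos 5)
Your proof is correct and follows essentially the same route as the paper: triangulate $\partial P$ into $2b-4$ triangles via Euler's formula, cone from an interior lattice point to get $2b-4$ tetrahedra, and then refine with the remaining $k-1$ interior points at a cost of at least $3$ tetrahedra each, finishing with (\ref{volbytriangulation}). Your extra care in building the boundary triangulation facet-by-facet (with edge compatibility) and in checking that Theorem~\ref{refinementtheorem} still applies even though $P$ need not be clean addresses details the paper's proof leaves implicit.
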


\begin{proof} By (\ref{volbytriangulation}), it
suffices to show $P$ has a lattice triangulation $\T$ satisfying
\[ |\T|\ge 2b+3k-7. \] Any refinement sequence starting with
any  basic triangulation of $P$ will suffice.
Consider the graph $G$ whose vertex set $\V(G)$ 
is $\partial P \cap \Z^3$
and whose edge set  consists of the edges of $\partial P$.
Since $P$ is convex, $G$
is planar. (Just embed $G$ into $\mathbb{S}^2$ and then map onto $\R^2$.)
Thus every triangulation of 
$G$ has $2\cdot |\V(G)| - 4$ faces, 
implying $\partial P$ can be triangulated into $2b-4$
triangles. Since $k\ge 1$, $P$ can be triangulated into $2b-4$ subtetrahedra
using any interior lattice point.
The remaining interior
points refine this triangulation, via Theorem \ref{refinementtheorem},
into a full triangulation having at least $2b-4 + 3(k-1)$
subtetrahedra.
\end{proof}

This bound is sharp (consider any clean, non-empty lattice tetrahedron).
A  similar result is proved (independently) in \cite[Lemma 3.5.5]{JDJRFL} by
De Loera, Rambau, and Santos Leal. While their proof does not 
assume that $P$ is nonempty, it does assume that $P\cap \Z^3$
is in general position. 
They also showed that any polyhedron with $n$ vertices
has a triangulation consisting of at most $2n-7$ tetrahedra. 

\section{Future Research}

What can we say about an upper bound for the volume of polyhedra?
Hensley proved in \cite{DH} that the volume of any lattice
$d$-polytope is bounded above by a function in terms of $d$
and the number of interior lattice points. 
Ziegler improved Hensley's results and
showed in \cite{LZ} that the volume of a $k$-point
lattice $d$-polytope $P$ satisifies
\[ \Vol(P) \le k\cdot [7(k+1)]^{d\cdot 2^{d+1}}. \]
In $\R^3$, a few simple computations seem to indicate that the upper
bound on the volume of clean tetrahedra is linear in $k$. Moreover,
maximal-volume tetrahedra also appear to have collinear interior lattice
points. Unlike the minimal-volume tetrahedra, however, the line
containing the interior points does not pass through any vertex.
Based on these computations, we make the following conjectures.

\begin{conjecture} \label{firstconjecture}
If $T\in \S_k^3$, then $\Vol(T) \le \frac{1}{6!}(12k+8)$.
\end{conjecture}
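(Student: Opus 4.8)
The plan is to reduce first to a normal form and then to an arithmetic counting problem. Since $T$ is clean, each $2$-face meets $\Z^3$ in exactly its three vertices and is therefore a unimodular triangle; in particular $T$ has a clean facet, so Reznick's classification \cite{BR86,BR} lets us take $T = T_{a,b,n} = \conv(\origin, e_1, e_2, (a,b,n))$ with $0 < a,b < n$, where cleanliness of $T$ itself imposes coprimality conditions (among them $\gcd(a,n) = \gcd(b,n) = 1$). Then $\Vol(T) = n/6$, so the inequality is equivalent to $n \le 12k+8$, where $k = |\int(T)\cap\Z^3|$; we may as well assume $k \ge 1$, since for $k = 0$ the Reeve tetrahedra $T_{1,1,n}$ show that no bound of this shape can hold.

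The next step is to convert $k$ into a residue count. Let $L = \Z e_1 + \Z e_2 + \Z(a,b,n) \subset \Z^3$, a sublattice of index $n$; since $(a,b,n) \equiv n e_3 \pmod{L}$, the class of $e_3$ generates $\Z^3/L \cong \Z/n$, so every coset has a unique representative $s_1 e_1 + s_2 e_2 + s_3(a,b,n)$ with $(s_1,s_2,s_3) \in [0,1)^3$, and the height-$j$ coset has $(s_1,s_2,s_3) = (\{-ja/n\}, \{-jb/n\}, j/n)$. In these coordinates $\int(T) = \{\, s_i > 0,\ s_1 + s_2 + s_3 < 1 \,\}$ while all of $T$ lies inside the parallelepiped, so
\[
k \;=\; \#\bigl\{\, j \in \{1,\dots,n-1\} \;:\; \{j\bar a/n\} + \{j\bar b/n\} + \{j/n\} < 1 \,\bigr\},
\qquad \bar a = n-a,\quad \bar b = n-b,
\]
where $\{\cdot\}$ denotes fractional part. (This is the usual ``age''-graded description of the interior points of $T_{a,b,n}$; as a sanity check, $(a,b,n)=(3,3,4)$ gives $k=1$, matching Theorem \ref{hasgeneralform}.) Call $j$ \emph{good} if the displayed inequality holds. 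The $k$ good heights $j_1 < \dots < j_k$ split $\{0,1,\dots,n\}$ into $k+1$ consecutive blocks of total length $n$, so $n \le 12k+8$ is precisely the statement that the gaps between consecutive good heights (together with the first and last blocks) cannot be too long.

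The heart of the argument is to show good heights are not too sparse. Put $g(j) = \{j\bar a/n\} + \{j\bar b/n\} + \{j/n\}$ and $f = g-1$, so $j$ is good iff $f(j) < 0$. The involution $j \mapsto n-j$ gives $f(j) + f(n-j) = 1$, hence at most one of each pair $\{j,n-j\}$ is good and $k \le (n-1)/2$ — the easy, wrong-direction inequality. For a lower bound on $k$ I would study the sawtooth trajectory $j \mapsto f(j)$: each step changes $f$ by $(\bar a + \bar b + 1)/n$ minus the number of the three sequences $\{j\bar a/n\}$, $\{j\bar b/n\}$, $\{j/n\}$ that wrap past $1$ at that step, and over a full period there are exactly $\bar a + \bar b + 1 = 2n - a - b + 1$ such wraps. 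Bounding the length of a run on which $f \ge 0$ should come down to the three-distance (Steinhaus) theorem for $\{j\bar a/n\}$ and $\{j\bar b/n\}$ — equivalently, to the continued-fraction expansions of $\bar a/n$ and $\bar b/n$ — and a statement of the form ``every block of at least $12$ consecutive heights contains a good one'' already gives $n \le 12k+12$. To reach the sharp constant I expect one must first prove the structural fact noted just before the conjecture, namely that a \emph{maximal}-volume clean tetrahedron has all its interior lattice points collinear, on a line missing every vertex; that should cut the problem down to a small family of triples $(a,b,n)$, over which one finishes by a direct optimization identifying the extremal example.

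The main obstacle is exactly this sharp-constant step. A linear bound with non-optimal constants ought to follow from the gap analysis above, and Hensley's and Lagarias--Ziegler's general estimates \cite{DH,LZ} already bound $n$ in terms of $k$ (non-linearly); what is genuinely hard is isolating the constants $12$ and $8$, which appears to require simultaneously (i) a collinearity theorem for the extremal case and (ii) the resulting extremal computation — the same two ingredients that, in the minimal-volume problem treated above, occupy the bulk of the paper.
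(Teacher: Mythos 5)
First, note that the statement you are proving is labeled a \emph{conjecture} in the paper: the author offers no proof, only the remark that the $k=1$ case is known (Kasprzyk, Reznick) and that it is suggested by computations. So there is no argument of the paper's to compare yours against; the question is simply whether your proposal constitutes a proof, and it does not. Your reduction is sound as far as it goes: cleanliness does give a clean facet, so $T \simeq T_{a,b,n}$ with $\Vol(T)=n/6$, and the translation of $k$ into the count of heights $j$ with $\{j\bar a/n\}+\{j\bar b/n\}+j/n<1$ is the correct and standard parametrization (your sanity check at $(3,3,4)$ is right). The involution $j\mapsto n-j$ and the resulting bound $k\le (n-1)/2$ are also correct, but, as you say, point the wrong way.

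Everything after that is a research plan rather than a proof, and the load-bearing steps are precisely the ones left open. The assertion that ``every block of at least $12$ consecutive heights contains a good one'' is nowhere established, and the three-distance theorem will not deliver it by itself: that theorem controls the gap structure of a \emph{single} orbit $j\mapsto\{j\alpha\}$, whereas goodness here couples two independent rotations $\{j\bar a/n\}$, $\{j\bar b/n\}$ with the drift $j/n$, and nothing in your sketch controls their joint behavior (a priori the good set could cluster, leaving one long gap even when the average gap is about $12$). Likewise, the collinearity statement for maximal-volume tetrahedra and the final optimization over the reduced family of triples $(a,b,n)$ are announced as necessary but not carried out. Since you yourself identify the sharp-constant step as ``the main obstacle,'' the honest assessment is that the conjecture remains open after your proposal: you have correctly reformulated it as an arithmetic gap problem about the sequence $\{j\bar a/n\}+\{j\bar b/n\}+j/n$, which is a reasonable line of attack, but no inequality of the form $n\le Ck+C'$ is actually proved. (Minor point: the paper's $\frac{1}{6!}(12k+8)$ is surely a typo for $\frac{1}{3!}(12k+8)$, consistent with the conjectured extremal $T_{2k+1,4k+3,12k+8}$ of volume $(12k+8)/6$; your reading $n\le 12k+8$ is the intended one.)
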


\begin{conjecture} \label{secondconjecture}
If $T\in \S_k^3$ and $\Vol(T) = \frac{1}{6!}(12k+8)$, then
$T\simeq T_{2k+1,4k+3,12k+8}$. If $k>1$, then
$\int(T)\cap \Z^3$ is a set of collinear points that does
not pass through any vertex of $T$.
\end{conjecture}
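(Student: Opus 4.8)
The plan is to follow the template of the minimal-volume analysis above, with one essential difference: the analytic heart is now an \emph{upper} bound on $\Vol(T)$, not the lower bound (\ref{volumeboundeq}). As a first step, which also disposes of the last clause of the conjecture, I would verify that $U:=T_{2k+1,4k+3,12k+8}$ is a genuine extremal example, i.e.\ that $U\in\S_k^3$ with $\Vol(U)=\frac{1}{3!}(12k+8)$ (for this to be consistent with (\ref{asubd}) the factor $6!$ in Conjectures~\ref{firstconjecture} and~\ref{secondconjecture} should be read as $3!$). The volume of $U$ is immediate from (\ref{asubd}); cleanness reduces to checking that each of the three nontrivial facets is a unimodular lattice triangle, a finite $\gcd$ computation on the entries of $(2k+1,4k+3,12k+8)$; and, by the remark preceding Theorem~\ref{hasgeneralform}, an interior lattice point of $T_{a_1,a_2,a_3}$ is determined by its last coordinate, so the interior points of $U$ are exactly $(\lceil m(2k+1)/(12k+8)\rceil,\ \lceil m(4k+3)/(12k+8)\rceil,\ m)$ for those $m\in\{1,\dots,12k+7\}$ whose four barycentric coordinates relative to $U$ are all positive. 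Counting these $m$ is a ceiling-sum computation in the spirit of (\ref{countingidentityone}); verifying that the count is $k$ simultaneously lists the $k$ interior points, and from that list one reads off that they are collinear and that their line avoids $\V(U)=\{\origin,e_1,e_2,(2k+1,4k+3,12k+8)\}$.

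Granting Conjecture~\ref{firstconjecture}, I would classify the equality case just as in the proof of Theorem~\ref{hasgeneralform}. By Reznick's theorem \cite{BR} a clean lattice tetrahedron is unimodularly equivalent to some $T_{a,b,n}$ with $0<a,b<n$, and the volume hypothesis forces $n=12k+8$ via (\ref{asubd}); after the symmetries (\ref{positivefinalcoordinate}), (\ref{asubinonnegative}) and the coordinate swap $e_1\leftrightarrow e_2$ one may assume $0<a\le b<n$. It remains to show that among all $T_{a,b,12k+8}$ which are clean with exactly $k$ interior lattice points, the pair $(a,b)$ is forced to be $(2k+1,4k+3)$. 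The number of interior lattice points of $T_{a,b,n}$ is a sum of sawtooth/ceiling terms in $a/n$ and $b/n$ --- a Dedekind-sum quantity of Mordell--Pommersheim type --- and imposing that it equal $k$ with $n$ fixed should cut the admissible pairs down to the single class, much as (\ref{remainingaj}) pinned down the remaining coordinates in Theorem~\ref{hasgeneralform}.

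The genuine obstacle is Conjecture~\ref{firstconjecture} itself, the sharp bound $\Vol(T)\le\frac{1}{3!}(12k+8)$ for every $T\in\S_k^3$. None of the triangulation machinery above is of any help, since (\ref{volbytriangulation}) and refinements only ever produce \emph{lower} bounds on $\Vol(T)$ through lower bounds on $|\T|$, and the general upper bounds of Hensley \cite{DH} and Ziegler \cite{LZ} are exponential in $d$ and far too crude. One reformulation that may open a door: a clean lattice $3$-simplex satisfies $|T\cap\Z^3|=k+4$, so its $h^*$-vector is $(1,k,h^*_2,k)$ and $3!\Vol(T)=1+2k+h^*_2$; hence Conjecture~\ref{firstconjecture} is equivalent to the sharp Ehrhart-coefficient inequality $h^*_2\le 10k+7$ for clean lattice $3$-simplices with $k\ge 1$ interior points, with equality exactly at $U$. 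I am not aware of such a bound in the literature.

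Concretely, to prove the equivalent bound $h^*_2(T)\le 10k+7$ one would analyse the clean triples $(a,b,n)$, using that $h^*_2(T_{a,b,n})$, like the interior-point count, is an explicit Dedekind-sum quantity attached to $a/n$ and $b/n$, and bounding it through the continued-fraction data of those fractions; the counterexamples in $\R^2$ above already show, in two dimensions, how such arithmetic controls both volume and the collinearity of interior points. I expect this step --- not the reduction to $T_{a,b,n}$ nor the final Diophantine bookkeeping --- to absorb essentially all of the work. Finally, as in the minimal-volume case, where non-collinearity of interior points is exactly what forces strict inequality in (\ref{atleastd}), it seems likely that the collinearity assertion of Conjecture~\ref{secondconjecture} will have to be established in tandem with the volume bound rather than extracted afterwards from the already-classified extremal simplex.
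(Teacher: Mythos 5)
You should first be aware that the statement you were asked to prove is labelled a \emph{conjecture} in the paper: it sits in the ``Future Research'' section, the author offers no proof, and the only evidence cited is the case $k=1$ (Kasprzyk, Reznick) together with ``a few simple computations.'' So there is no paper proof to compare against, and your proposal should be judged as a research plan. As such it is well organized and correctly diagnoses the landscape: you are right that the factor $6!$ must be read as $3!$ for consistency with the $k=1$ extremal tetrahedron $T_{3,7,20}$ of volume $20/6$; right that verifying $T_{2k+1,4k+3,12k+8}\in\S_k^3$ with the claimed volume and collinear, vertex-avoiding interior points is a finite ceiling-sum computation in the spirit of (\ref{countingidentityone}); and right that the paper's entire triangulation apparatus --- (\ref{volbytriangulation}), Theorem \ref{refinementtheorem}, and the bipyramid analysis --- produces only \emph{lower} bounds on $\Vol(T)$ and is structurally useless for an upper bound.

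That said, what you have written is not a proof, and the gap is exactly where you locate it. Conjecture \ref{firstconjecture} (equivalently, your reformulation $h^*_2\le 10k+7$ for clean lattice $3$-simplices with $k\ge 1$ interior points) is assumed, not established; without it nothing in the second or fourth paragraphs gets off the ground. Moreover, even granting Conjecture \ref{firstconjecture}, your classification step is only gestured at: you assert that the interior-point count of $T_{a,b,12k+8}$, a Dedekind--sum/Mordell--Pommersheim quantity in $a/n$ and $b/n$, ``should cut the admissible pairs down to the single class,'' but unlike the inequality (\ref{remainingaj}) in Theorem \ref{hasgeneralform} --- which is a two-line consequence of barycentric positivity --- there is no argument here that the Diophantine condition has a unique solution up to the symmetries you list, nor that cleanness of $T_{a,b,n}$ survives the reduction. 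Finally, your closing remark that the collinearity assertion likely must be proved \emph{in tandem} with the volume bound is an honest admission that the overall architecture of the proof is still undetermined. In short: the reduction and the extremal verification are plausible and could be carried out, but the analytic core of both conjectures remains open, in your write-up as in the paper.
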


These conjectures are true for $k=1$, as proved independently
by Kasprzyk \cite{AK} and Reznick \cite{BR}. It may be possible
to gain some new insight on the relationship between
lattice points of a lattice $d$-polytope and its volume
using Ehrhart theory.
For example, it is well known that every convex lattice $d$-polytope
is associated with an Ehrhart polynomial of degree $d$. Moreover, the
leading coefficient of the associated polynomial is precisely the
volume of the $d$-polytope (cf. \cite{MBSR}, \cite{EE}).

\section{Acknowledgements}

I would like to thank my thesis advisor Bruce Reznick for having introduced
me to lattice polytopes. I am also grateful to Phil Griffith, former
director of graduate studies of the math department
 at the University of Illinois in
Champaign-Urbana, who was directly responsible for my opportunity
to work with Bruce Reznick. I also would like to thank Matthias Beck and
Sinai Robins, the authors of
\cite{MBSR}, for their enlightening course on lattice point enumeration
in Banff, Canada.

%
%

\bigskip

\end{document}